\title[Maximal surfaces in AdS 3-manifolds with particles]{Maximal surfaces in anti-de Sitter 3-manifolds with particles}
\author{J\'{e}r\'{e}my Toulisse}
\address{
Department of Mathematics \\
Mathematics Research Unit BLG \\
University of Southern Califonia \\
3620 S. Vermont Avenue, KAP 104 \\
Los Angeles, CA 90089-2532}
\email{toulisse@usc.edu}
\date{\today}
\newcommand{\isom}{\text{Isom}_+}
\newcommand{\T}{\mathcal{T}_\theta(\Sigma_{\mathfrak{p}})}
\newcommand{\fkp}{\mathfrak{p}}
\newcommand{\Mess}{\mathfrak{M}}
\newcommand{\M}{\mathcal{M}_\theta(\Sigma_{\mathfrak{p}})}
\newcommand{\HH}{\mathcal{H}_\theta(\Sigma_{\mathfrak{p}})}
\newcommand{\I}{\textrm{I}}
\newcommand{\II}{\textrm{II}}
\newcommand{\III}{\textrm{III}}
\newcommand{\W}{\mathcal{W}}
\newcommand{\Z}{\mathcal{Z}}
\newcommand{\N}{\mathcal{N}}
\newcommand{\AdS}{\text{AdS}}
\newcommand{\PSL}{\text{PSL}_2}
\theoremstyle{definition}
\newtheorem{Def}{Definition}[section]
\theoremstyle{plain}
\newtheorem{theo}[Def]{Theorem}
\newtheorem{prop}[Def]{Proposition}
\newtheorem{lemma}[Def]{Lemma}
\theoremstyle{remark}
\newtheorem{rem}{Remark}[section]
\begin{document}

\maketitle

\begin{abstract}
We prove the existence of a unique maximal surface in each anti-de Sitter (AdS) Globally Hyperbolic Maximal (GHM) manifold with particles (that is, with conical singularities along time-like lines) for cone angles less than $\pi$. We interpret this result in terms of Teichm\"uller theory, and prove the existence of a unique minimal Lagrangian diffeomorphism isotopic to the identity between two hyperbolic surfaces with cone singularities when the cone angles are the same for both surfaces and are less than $\pi$.
\end{abstract}

\tableofcontents

\section{Introduction}
For $\theta \in (0,2 \pi)$,  consider the space obtained by gluing with a rotation the boundary of an angular sector of angle $\theta$ between two half-lines in the hyperbolic disk. We denote this singular Riemannian manifold by $\mathbb{H}^2_\theta$. The induced metric is called \textbf{local model for hyperbolic metric with conical singularity of angle $\theta$}. This metric is hyperbolic outside the singular point.

Let $\Sigma_\fkp$ be a closed oriented surface of genus $g$ with $n$ marked points $\fkp:=(p_1,...,p_n)\subset\Sigma$ and $\theta :=(\theta_1,...,\theta_n)\in (0,2\pi)^n$.

\begin{Def}
A hyperbolic metric $g$ with conical singularities of angle $\theta_i$ at the $p_i\in \fkp$ is a (singular) metric on $\Sigma_{\fkp}$ such that each $p_i\in\fkp$ has a neighborhood isometric to a neighborhood of the singular point in $\mathbb{H}^2_{\theta_i}$ and $(\Sigma_\fkp,g)$ has constant curvature $-1$ outside the marked points. 
\end{Def} 

It has been proved by M. Troyanov \cite{troyanov} and M.C. McOwen \cite{mcowen} that each conformal class of metric on a surface $\Sigma_\fkp$ with marked points admits a unique hyperbolic metric with cone singularities of angle $\theta_i$ at the $p_i$ as soon as 
$$\chi(\Sigma_\fkp)+\sum_{i=1}^n\left(\frac{\theta_i}{2\pi}-1\right)<0,$$
 where $\chi(\Sigma_\fkp)$ is the Euler characteristic of $\Sigma_\fkp$.

We denote by $\T$ the space of isotopy classes of hyperbolic metrics with cone singularities of angle $\theta$ (where the isotopies fix each marked point). Note that, from the theorem of Troyanov and McOwen, this space is canonically identified with the space of marked conformal structures on $\Sigma_\fkp$. As in dimension 2, a conformal structure is equivalent to a complex structure, $\T$ is also identified with the space of marked complex structures on $\Sigma_\fkp$.

When $\fkp=\emptyset$, $\T$ corresponds to the classical Teichm\"uller space $\mathcal{T}(\Sigma)$ of $\Sigma$, that is, the space of equivalence classes of marked hyperbolic structures on $\Sigma$.

\medskip
\noindent\textbf{Minimal Lagrangian diffeomorphism.}
\begin{Def}
Let $g_1,g_2 \in \mathcal{T}(\Sigma)$, a minimal Lagrangian diffeomorphism $\Psi: (\Sigma,g_1) \longrightarrow (\Sigma,g_2)$ is an area preserving diffeomorphism such that its graph is a minimal surface in $(\Sigma\times \Sigma, g_1\oplus g_2)$.
\end{Def}

In \cite{MR1201611}, R. Schoen proved the existence of a unique minimal Lagrangian diffeomorphism isotopic to the identity between two hyperbolic surfaces $(\Sigma,g_1)$ and $(\Sigma,g_2)$ (see also \cite{labourie1992surfaces}).

Minimal Lagrangian diffeomorphisms are related to harmonic diffeomorphisms (that is to diffeomorphisms whose differential minimizes the $L^2$ norm). For a conformal structure $\frak{c}$ on $\Sigma$ and $g\in\mathcal{T}(\Sigma)$, the work of  J.J. Eells and J.H. Sampson \cite{eells-sampson} implies the existence of a unique harmonic diffeomorphism $u: (\Sigma,\frak{c}) \to (\Sigma,g)$ isotopic to the identity. Given a harmonic diffeomorphism $u$ we define its \textbf{Hopf differential} by  $\Phi(u):=u^*g^{2,0}$ (that is the $(2,0)$ part with respect to the complex structure associated to $\frak{c}$ of $u^*g$). The work of R. Schoen implies that, given $g_1,g_2\in \mathcal{T}(\Sigma)$, there exists a unique conformal structure $\frak{c}$ on $\Sigma$ such that $\Phi(u_1)+\Phi(u_2)=0$, where $u_i: (\Sigma,\frak{c})\to (\Sigma,g_i)$ is the unique harmonic diffeomorphism isotopic to the identity. Moreover, $u_2\circ u_1^{-1}$ is the unique minimal Lagrangian diffeomorphism isotopic to the identity between $(\Sigma,g_1)$ and $(\Sigma,g_2)$.

\medskip

In his thesis, J. Gell-Redman \cite{gell2010harmonic} proved the existence of a unique harmonic diffeomorphism isotopic to the identity from a closed surface with $n$ marked points equipped with a conformal structure to a negatively curved surface with $n$ conical singularities of angles less than $\pi$ at the marked points (where the isotopy fixes each marked point). In this paper, we prove the existence of a unique minimal Lagrangian diffeomorphism isotopic to the identity between hyperbolic surfaces with conical singularities of angles less than $\pi$ and so we give a positive answer to \cite[Question 6.3]{barbot2012some}.

\begin{theo}\label{min}
Given two hyperbolic metrics $g_1,g_2 \in \T$ with cone singularities of angles $\theta=(\theta_1,...,\theta_n)\in (0,\pi)^n$, there exists a unique minimal Lagrangian diffeomorphism $\Psi: (\Sigma,g_1) \longrightarrow (\Sigma,g_2)$ isotopic to the identity.
\end{theo}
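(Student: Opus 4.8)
The plan is to deduce Theorem \ref{min} from the existence and uniqueness of maximal surfaces, via the classical correspondence (due in the non-singular case to Krasnov--Schlenker) between maximal surfaces in $\AdS$ GHM manifolds and minimal Lagrangian diffeomorphisms between hyperbolic surfaces. Concretely, I would use the Mess-type parametrization $\Mess\colon \M \to \T\times\T$, which identifies an $\AdS$ GHM manifold with particles with the pair $(g_1,g_2)$ of its left and right hyperbolic metrics, together with the main theorem of the paper, which supplies a \emph{unique} maximal surface in the manifold $\Mess^{-1}(g_1,g_2)$.

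First I would recall the embedding data of a spacelike surface $S$: its induced metric $\I$, with compatible complex structure $J$, and its shape operator $B$, which is self-adjoint for $\I$ and Codazzi, $d^{\nabla}B=0$. Maximality means $\mathrm{tr}(B)=0$, so Gauss' equation reads $K_{\I}=-1-\det B$. From these data I would form the two metrics
$$g_\pm := \I\big((\mathrm{Id}\pm JB)\cdot,\,(\mathrm{Id}\pm JB)\cdot\big),$$
and check, using Gauss and Codazzi, that each $g_\pm$ is hyperbolic and that the identity of $S$, viewed as a map $(S,g_+)\to(S,g_-)$, is minimal Lagrangian. The relevant operator $b:=(\mathrm{Id}+JB)^{-1}(\mathrm{Id}-JB)$ is self-adjoint and positive for $g_+$ (since $\I((\mathrm{Id}-JB)x,(\mathrm{Id}+JB)y)=(1+\det B)\,\I(x,y)$ is symmetric and positive as $\det B>-1$), has determinant $1$, and is Codazzi, which is exactly the characterization of a minimal Lagrangian diffeomorphism. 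As the left and right metrics of $\Mess^{-1}(g_1,g_2)$ are $g_1$ and $g_2$, the metrics $g_\pm$ coming from the maximal surface are isotopic to $g_1$ and $g_2$, yielding the desired $\Psi\colon(\Sigma,g_1)\to(\Sigma,g_2)$ isotopic to the identity.

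For uniqueness I would run the correspondence backwards: a minimal Lagrangian diffeomorphism $\Psi\colon(\Sigma,g_1)\to(\Sigma,g_2)$ isotopic to the identity produces, from its normalized Codazzi operator, embedding data $(\I,B)$ satisfying the Gauss and Codazzi equations of a maximal surface, hence a maximal surface in $\Mess^{-1}(g_1,g_2)$. Since the main theorem asserts this maximal surface is unique and the two constructions are mutually inverse, the minimal Lagrangian diffeomorphism is unique.

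The main obstacle, and the reason for the hypothesis $\theta_i<\pi$, is the analysis at the particles. I would need to show that the induced metric $\I$ of the maximal surface has cone singularities of angle $\theta_i$ at the marked points, that $B$ decays appropriately so that $\mathrm{Id}\pm JB$ remains invertible and the metrics $g_\pm$ are genuinely hyperbolic with cone singularities of the \emph{same} angle $\theta_i$, and that $\Psi$ extends across the cone points as an area-preserving homeomorphism fixing each $p_i$. The condition $\theta_i<\pi$ is precisely what guarantees this controlled behaviour near the singular locus and what forces the cone angle of $g_\pm$ to equal that of $\I$; carrying out this local model is the delicate point, the rest being the now-standard global correspondence.
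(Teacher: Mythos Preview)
Your proposal is correct and follows essentially the same route as the paper: both construct the operator $b=(E+JB)^{-1}(E-JB)$ from the unique maximal surface furnished by Theorem~\ref{surf}, verify the Codazzi/self-adjoint/positive/determinant-one characterization of minimal Lagrangian maps (Proposition~\ref{minlag}, with Labourie's Theorem~\ref{labourie} for the Codazzi part), and then invert the construction for uniqueness. The local analysis at the particles you flag in your last paragraph is not an extra burden here: the paper has already established that the induced metric $\I$ has cone angles $\theta_i$ and that the principal curvatures of $B$ lie in $(-1,1)$ (Proposition~\ref{simplepoles} and \cite[Lemma~5.15]{krasnov2007minimal}), so $E\pm JB$ is invertible and $g_\pm$ are the hyperbolic cone metrics of the Mess parametrization by construction.
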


In particular, this result extends the result of R. Schoen to the case of surfaces with conical singularities of angles less than $\pi$. The proof of this statement uses the deep connections between hyperbolic surfaces and three dimensional anti-de Sitter (AdS) geometry.

\medskip
\noindent\textbf{AdS geometry.} An anti-de Sitter (AdS) manifold $M$ is a Lorentz manifold of constant sectional curvature $-1$. It is Globally Hyperbolic Maximal (GHM) when it contains a closed Cauchy surface, that is a space-like surface intersecting every inextensible time-like curve exactly once, and which is maximal in a certain sense (precised in Section \ref{adsmfd}). The global hyperbolicity condition implies in particular that $M$ is homeomorphic to $\Sigma\times \mathbb{R}$ (where $\Sigma$ has the same topology as the Cauchy surface). In his groundbreaking work, G. Mess \cite[Section 7]{mess2007lorentz} considered the moduli space $\mathcal{M}(\Sigma)$ of AdS GHM structure on $\Sigma\times \mathbb{R}$. He proved that $\mathcal{M}(\Sigma)$ is naturally parametrized by two copies of the Teichm\"uller space $\mathcal{T}(\Sigma)$. This result can be thought as an AdS analogue of the famous Bers' simultaneous uniformization  Theorem \cite{bers1960simultaneous}. In fact, Bers' Theorem provides a parametrization of the moduli space $\mathcal{QF}(\Sigma)$ of quasi-Fuchsian structures on $\Sigma\times \mathbb{R}$ by two copies of the Teichm\"uller space $\mathcal{T}(\Sigma)$.

In \cite{barbot2007constant}, the authors proved the existence of a unique maximal space-like surface (that is an area-maximizing surface whose induced metric is Riemannian) in each AdS GHM metric on $\Sigma \times \mathbb{R}$. Note that maximal surfaces are the Lorentzian analogue of minimal surfaces in Riemannian geometry: they are characterized by the vanishing of the mean curvature field. This result is actually equivalent to the result of R. Schoen of existence of a unique minimal Lagrangian diffeomorphism (see \cite{aiyama}).
 
\medskip 
 
  A particle in an AdS GHM manifold $M$ is a conical singularity along a time-like line. In this paper, we only consider particles with cone angles less than $\pi$. In \cite {bonsante2009ads}, F. Bonsante and J.-M. Schlenker extended Mess' parametrization to the case of  AdS GHM manifolds with particles: they gave a parametrization of the moduli space of AdS convex GHM manifolds with particles by two copies of the Teichm\"uller space $\T$. In this paper, we study the existence and uniqueness of a maximal surface in AdS GHM manifolds with particles, and give a positive answer to  \cite[Question 6.2]{barbot2012some}. Namely, we prove
  
\begin{theo}\label{surf}
For each AdS convex GHM 3-manifold $(M,g)$ with particles of angles less than $\pi$, there exists a unique maximal space-like surface $S\hookrightarrow (M,g)$.
\end{theo}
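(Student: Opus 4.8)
The plan is to construct the maximal surface by an approximation scheme: remove small tubes around the particles, solve the maximal surface equation on the resulting smooth globally hyperbolic spacetime-with-boundary, and pass to the limit; uniqueness will then follow from a maximum principle argument pushed all the way to the singular lines. The first ingredient is a pair of barriers. By the description of $\AdS$ convex GHM manifolds with particles in \cite{bonsante2009ads}, $(M,g)$ contains a convex core whose past and future boundary components are achronal spacelike surfaces. Since a maximal surface is a critical point of the area functional, I would show by a maximum-principle comparison that any spacelike maximal surface homotopic to a Cauchy surface is trapped inside the convex core, the future boundary acting as an upper barrier and the past boundary as a lower barrier. The key point, valid precisely because each $\theta_i<\pi$, is that near a particle a totally geodesic spacelike plane through the singular point is convex, so it can serve as a local barrier that both confines the surface and forces it to meet the singular line.

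For existence I would remove a tube $T_\epsilon$ of radius $\epsilon$ around each singular line, obtaining a smooth globally hyperbolic spacetime-with-boundary equipped with the two barriers above and below. On this region the classical existence theory for maximal hypersurfaces between barriers, as used in the non-singular setting of \cite{barbot2007constant}, produces a maximal surface $S_\epsilon$ with boundary on $\partial T_\epsilon$. Because every $S_\epsilon$ is trapped between the fixed barriers, they are uniformly spacelike; for the maximal surface equation this uniform gradient bound is exactly what yields interior $C^{2,\alpha}$ estimates on compact subsets away from the singular lines, so I can extract a limit $S$ as $\epsilon\to 0$, a smooth maximal surface on $M$ minus the singular lines. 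The heart of the existence proof is then to show $S$ closes up correctly across each particle: using the model hyperbolic cone $\mathbb{H}^2_{\theta_i}$ together with the regularity theory for cone angles less than $\pi$ developed by Gell-Redman \cite{gell2010harmonic}, I would prove that $S$ extends continuously across the singular line, meets it orthogonally, and that its induced metric $\I$ has a cone singularity of the prescribed angle $\theta_i$.

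For uniqueness, suppose $S$ and $S'$ are both maximal. Viewing them as graphs over a common Cauchy surface, I would use global hyperbolicity to push $S$ into the future of $S'$ and then flow it back along the time direction until the two surfaces first touch. At a first contact point they are tangent with $S$ locally to the future of $S'$; if the point is regular, the difference of the two graph functions satisfies a linear elliptic inequality and Hopf's strong maximum principle forces $S=S'$ near the point, hence everywhere by unique continuation. The remaining case, first contact occurring at a particle, is the delicate one: there I would again invoke $\theta_i<\pi$, using that both surfaces meet the singular line orthogonally with the same cone angle together with a comparison in the smooth region arbitrarily close to the singularity, to rule out strict contact at the cone point and conclude $S\equiv S'$.

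The main obstacle throughout is the behaviour at the particles. Away from the singular lines everything reduces to the well-understood theory of maximal graphs in a globally hyperbolic spacetime; it is the control of the limit surface's cone angle and orthogonality in the existence step, and the maximum principle at the cone point in the uniqueness step, that require genuinely new input. In both places I expect the hypothesis $\theta_i<\pi$ to be indispensable, since it is exactly what guarantees convex barriers at the singularity and the correct closing-up of the surface; once this is established, the equivalence between maximal surfaces and minimal Lagrangian maps (compare \cite{aiyama}) should let me transfer the statement into the form needed for Theorem \ref{min}.
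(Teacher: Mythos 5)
Your outline correctly identifies where the difficulty lies (the particles), but at each of those points it asserts rather than supplies the argument, and in one place the proposed tool would not work. First, the approximation scheme: removing tubes $T_\epsilon$ and solving on the complement forces you to pose and solve a \emph{boundary value} problem for the maximal surface equation on a spacetime with timelike boundary, and you never say what boundary condition $S_\epsilon$ satisfies on $\partial T_\epsilon$ nor why a solution exists --- the barrier existence theorems of Gerhardt and of \cite{barbot2007constant} are for closed Cauchy surfaces in a globally hyperbolic spacetime without boundary. The paper sidesteps this entirely by regularizing the \emph{metric}: the hyperbolic cone is replaced by a smooth convex cap, producing smooth AdS GHM metrics $g_n\to g$ on the closed manifold, to which the $\mathcal{C}^0$-barrier existence theorem applies directly (the barriers being equidistant surfaces from the convex core boundary, whose strict convexity near the particles must itself be checked for $g_n$). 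Second, and more seriously, the ``closing up'' of the limit surface across a particle --- that the limit is genuinely spacelike there, meets the line orthogonally, and induces the right cone angle --- is the heart of the existence proof, and citing Gell-Redman does not supply it: his results concern harmonic diffeomorphisms between cone surfaces, not limits of maximal graphs in AdS cone-manifolds. The paper proves spacelikeness by analyzing the link of the limit surface at the cone point and showing that a lightlike direction would allow an area-increasing deformation (impossible for a limit of maximal surfaces, and this is where $\theta<\pi$ enters via the length of the link), and proves orthogonality by a blow-up to the Minkowski cone $\mathbb{R}^{2,1}_\theta$ together with holomorphicity of the Gauss map of the blow-up limit, forcing the Gauss map to vanish at the singularity. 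Without arguments of this kind your limit could a priori be lightlike at the particle or fail to be orthogonal to it.

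For uniqueness your first-contact strong maximum principle is fine away from the particles, but you concede that the delicate case is first contact \emph{at} a particle, and the sketch you give there (``a comparison in the smooth region arbitrarily close to the singularity'') is exactly the step that needs a proof; note also that ``flowing $S$ back along the time direction'' requires a substitute for a global time translation, e.g.\ equidistant surfaces. The paper's argument is different and avoids isolating a contact point: it takes a timelike geodesic $\gamma$ maximizing the causal length between $S_1$ and $S_2$ (which exists by a compactness argument in the universal cover and along which both surfaces are orthogonal), and then a second-variation computation along a principal direction, compared with the strict convexity of equidistant surfaces from totally geodesic planes, shows the maximal causal length could be strictly increased --- a contradiction. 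You may find it easier to adapt that global variational argument than to control the Hopf maximum principle at a cone point.
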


Moreover, we prove that the existence of a unique maximal surface provides the existence of a unique minimal Lagrangian diffeomorphism isotopic to the identity 
$$\Psi: (\Sigma_\fkp,g_1) \longrightarrow (\Sigma_\fkp,g_2),$$
where $g_1,g_2\in\T$ parametrize the AdS convex GHM metric with particles $g$.

\medskip

It follows from Theorem \ref{surf} that one can associate to each pair of hyperbolic metrics with conical singularities $g_1,g_2\in\T$ the first and second fundamental form of the unique maximal surface in $(M,g)$ where $g$ is parametrized by $g_1$ and $g_2$. It gives a map 
$$\varphi: \T\times \T\longrightarrow T^*\T.$$
From Theorem \ref{surf} and using the Fundamental Theorem of surfaces in AdS manifolds with particles (see Section \ref{adsparticles}), we prove that this map is one-to-one. In Theorem \ref{interpretation}, we give a nice geometric interpretation of $\varphi$: given a pair of points $g_1,g_2\in\T$, there exists a unique conformal structure $\frak{c}$ on $\Sigma_\fkp$ such that $\Phi(u_1)+\Phi(u_2)=0$ and $u_2\circ u_1^{-1}$, where $\Phi(u_i)$ is the Hopf differential of the harmonic map $u_i: (\Sigma_\fkp,\frak{c})\to (\Sigma_\fkp,g_i)$. We then have $\varphi(g_1,g_2)=\big( \frak{c},i\Phi(u_1)\big)$. This picture extends the connections between minimal Lagrangian diffeomorphisms and harmonic maps to the case with conical singularities.

Finally, in \cite{toulisse}, we prove the existence of a minimal map between hyperbolic surfaces with conical singularities when the two surfaces have different cone angles. In that case, uniqueness only holds when the cone angles of one surface are strictly smaller than the ones of the other surface.
\medskip
 
\noindent\textbf{Acknowledgement.} It is a pleasure to thank Jean-Marc Schlenker for its patience while discussing about the paper. I would also thank Francesco Bonsante and Thierry Barbot for helpful and interesting conversations about this subject. I am grateful to the referee who helped to improve the paper.

\section{AdS GHM 3-manifolds}\label{adsmfd}

\subsection{Mess parametrization}

\

\medskip

\noindent\textbf{The AdS 3-space.}
Let $\mathbb{R}^{2,2}$ be the usual real 4-space with the quadratic form:
$$q(x):=x_1^2+x_2^2-x_3^2-x_4^2.$$
The anti-de Sitter (AdS) 3-space is defined by:
$$\AdS_3=\{x\in \mathbb{R}^{2,2} \text{ such that } q(x)=-1 \}.$$
With the induced metric, $\AdS_3$ is a Lorentzian symmetric space of dimension 3 with constant curvature $-1$ diffeomorphic to $\mathbb{D}\times S^1$ (where $\mathbb{D}$ is a disk of dimension 2). In particular, $\AdS_3$ is not simply connected.

The Klein model of the AdS 3-space is given by the image of $\AdS_3$ under the canonical projection 
$$\pi : \mathbb{R}^{2,2}\setminus\{0\} \longrightarrow \mathbb{RP}^3.$$
Denote by $\AdS^3:=\pi(\AdS_3)$. In the affine chart $x_4\neq 0$ of $\mathbb{RP}^3$, $\AdS^3$ is the interior of the hyperboloid of one sheet given by the equation $\{x_1^2+x_2^2-x_3^2=1\}$, and this hyperboloid identifies with the boundary $\partial \AdS^3$ of $\AdS^3$ in this chart. In this model, geodesics are given by straight lines: space-like geodesics are the ones which intersect the boundary $\partial \AdS^3$ in two points, time-like geodesics are the ones which do not have any intersection and light-like geodesics are tangent to $\partial \AdS^3$.
 
 \begin{rem}
This model is called Klein model by analogy with the Klein model of the hyperbolic space. In fact, in both models, geodesics are given by straight lines.
 \end{rem}

\medskip
\noindent\textbf{The isometry group.} 
As $\partial \AdS^3$ is a hyperboloid of one sheet, it is foliated by two families of straight lines. We call one family the right one and the other, the left one. The group $\isom(\AdS^3)$ of space and time-orientation preserving isometries of $\AdS^3$ preserves each family of the foliation. Fix a space-like plane $P_0$ in $\AdS^3$, its boundary is a space-like circle in $\partial \AdS^3$ which intersects each line of the right (respectively the left) family exactly once. Then $P_0$ provides an identification of each family with $\mathbb{RP}^1$ (when changing $P_0$ to another space-like plane, the identification changes by a conjugation by an element of $\PSL(\mathbb{R})$). It is proved in \cite[Section 7]{mess2007lorentz} that each element of $\isom(\AdS^3)$ acts by projective transformations on each $\mathbb{RP}^1$ and so extend to a pair of elements in $\PSL(\mathbb{R})$. So $\isom(\AdS^3)\cong \PSL(\mathbb{R})\times \PSL(\mathbb{R})$.

\begin{rem}\label{boundary} Fixing a space-like plane $P_0$ also provides an identification between $\partial \AdS^3$ and $\mathbb{RP}^1\times \mathbb{RP}^1$. In fact, given a point $x\in \partial \AdS^3$, there exists a unique line in the right family a unique line the left one which pass through $x$. It follows that $x\in \partial \AdS^3$ gives a point in $\mathbb{RP}^1\times\mathbb{RP}^1$. This application is bijective.
\end{rem}

\medskip
\noindent\textbf{AdS GHM 3-manifold.}
An AdS 3-manifold is a manifold $M$ endowed with a $(G,X)$-structure, where $G=\isom(\AdS^3)$, $X=\AdS^3$. That is, $M$ is endowed with an atlas of charts taking values in $\AdS^3$ so that the transition functions are restriction of elements in $\isom(\AdS^3)$. An AdS 3-manifold $M$ is Globally Hyperbolic Maximal (GHM) if it satisfies the following two conditions:

\begin{enumerate}
\item\textbf{Global Hyperbolicity:} $M$ contains a space-like Cauchy surface, that is a closed oriented surface which intersects every inextensible time-like curve exactly once.
\item\textbf{Maximality:} $M$ cannot be strictly embedded in an AdS manifold satisfying the same properties.
\end{enumerate}

Note that the Global Hyperbolicity condition implies strong restrictions on the topology of $M$. In particular, $M$ has to be homeomorphic to $\Sigma\times \mathbb{R}$ where $\Sigma$ is an oriented closed surface of genus $g>0$ (homeomorphic to the Cauchy surface). We restrict ourselves to the case $g>1$. We denote by $\mathcal{M}(\Sigma)$ the space of AdS GHM structure on $\Sigma \times \mathbb{R}$ considered up to isotopy, and by $\mathcal{T}(\Sigma)$ the Teichm\"uller space of $\Sigma$.

We have a fundamental result due to G. Mess \cite[Proposition 20]{mess2007lorentz}:

\begin{theo}[Mess]
There is a parametrization $\Mess:  \mathcal{M}(\Sigma) \longrightarrow \mathcal{T}(\Sigma) \times \mathcal{T}(\Sigma)$.
\end{theo}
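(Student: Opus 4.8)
The plan is to construct $\Mess$ directly from the holonomy of the $(G,X)$-structure, using the isomorphism $\isom(\AdS^3)\cong \PSL(\mathbb{R})\times \PSL(\mathbb{R})$ established above. Given a class in $\mathcal{M}(\Sigma)$, I would fix a developing map $\mathrm{dev}\colon \widetilde{M}\to \AdS^3$ with holonomy $\rho\colon \pi_1(\Sigma)\to \isom(\AdS^3)$, and write $\rho=(\rho_l,\rho_r)$ for its two projections to $\PSL(\mathbb{R})$. The map $\Mess$ is then defined to send the GHM class to the pair of conjugacy classes $([\rho_l],[\rho_r])$. The substance of the proof is to show that each $\rho_l,\rho_r$ is a Fuchsian representation (so that the pair genuinely lands in $\mathcal{T}(\Sigma)\times\mathcal{T}(\Sigma)$) and that this assignment is a bijection.

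For the forward direction I would lift a Cauchy surface $S$ to a $\rho$-equivariant space-like surface $\widetilde{S}\subset \AdS^3$, whose asymptotic boundary is a closed achronal curve $\Lambda\subset \partial \AdS^3$. Using the identification $\partial \AdS^3\cong \mathbb{RP}^1\times\mathbb{RP}^1$ of Remark \ref{boundary}, achronality forces $\Lambda$ to be the graph of an orientation-preserving homeomorphism $f\colon \mathbb{RP}^1\to \mathbb{RP}^1$. The $\rho$-invariance of $\Lambda$ then translates into the intertwining relation $f\circ \rho_l(\gamma)=\rho_r(\gamma)\circ f$ for every $\gamma\in\pi_1(\Sigma)$, so that $f$ conjugates the boundary action of $\rho_l$ to that of $\rho_r$. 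The existence of such an equivariant homeomorphism of $\mathbb{RP}^1$ pins down the topological dynamics of each factor: I would compute that the Euler number of each $\rho_l,\rho_r$ is maximal, equal to $\pm\chi(\Sigma)$, and invoke Goldman's characterization to conclude that both are Fuchsian. Since $\rho$ determines the GHM structure on the domain it preserves, this also yields injectivity of $\Mess$.

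For the inverse, starting from a pair of Fuchsian classes I would choose equivariant boundary parametrizations $\phi_l,\phi_r\colon \partial\pi_1(\Sigma)\to \mathbb{RP}^1$ and set $f:=\phi_r\circ\phi_l^{-1}$, whose graph $\Lambda$ is an achronal topological circle in $\partial \AdS^3$ invariant under $\rho=(\rho_l,\rho_r)$. I would then form the invisible domain (domain of dependence) $D(\Lambda)\subset \AdS^3$ of $\Lambda$, on which $\rho$ acts freely and properly discontinuously, and put $M:=D(\Lambda)/\rho$. Global hyperbolicity is checked by exhibiting a Cauchy surface inside $D(\Lambda)$, for instance a level set of the cosmological time function, while maximality holds because $D(\Lambda)$ is the largest $\rho$-invariant domain admitting such a Cauchy surface. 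Finally I would verify that the two constructions are mutually inverse, which reduces to matching the asymptotic limit curve of an equivariant space-like surface with the graph of $f$.

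The main obstacle is the rigidity step in the forward direction: showing that the mere existence of an equivariant space-like surface, equivalently of the achronal graph $\Lambda$, already forces each factor to be Fuchsian rather than merely nonelementary. The Euler-number route is the cleanest, but it requires controlling the asymptotic geometry of $\widetilde{S}$ near $\partial \AdS^3$ carefully enough to know that $\Lambda$ is truly the graph of a homeomorphism and that its equivariance computes the Euler class. On the inverse side, the parallel delicate point is proving that $D(\Lambda)$ is nonempty, acted upon properly, and yields a maximal rather than merely globally hyperbolic quotient.
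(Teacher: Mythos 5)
Your proposal follows essentially the same route as the paper: split the holonomy via $\isom(\AdS^3)\cong\PSL(\mathbb{R})\times\PSL(\mathbb{R})$, show each factor is Fuchsian by extracting the equivariant achronal limit curve, computing that the Euler class is maximal, and invoking Goldman's criterion, then invert by realizing the graph of the homeomorphism of $\mathbb{RP}^1$ conjugating the two Fuchsian actions as a curve in $\partial\AdS^3\cong\mathbb{RP}^1\times\mathbb{RP}^1$ and quotienting an invariant domain. The only divergence is the final step: the paper quotients the convex hull of that curve and appeals to the Choquet-Bruhat--Geroch theorem for the unique maximal extension, whereas you take the domain of dependence of $\Lambda$ directly and must then establish its maximality by hand --- the two are equivalent, with the paper outsourcing the maximality issue to a citation while your version carries the burden of proving that $D(\Lambda)$ is the largest such invariant domain.
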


\begin{proof}[Construction of the parametrization]
To an AdS GHM structure on $M$ is associated its holonomy representation $\rho :\pi_1(M) \to \isom(\AdS^3)$ (well defined up to conjugation). As $\isom(\AdS^3)\cong \PSL(\mathbb{R})\times \PSL(\mathbb{R})$ and as $\pi_1(M)=\pi_1(\Sigma)$, one can split the representation $\rho$ into two morphisms 
$$\rho_1,\rho_2: \pi_1(\Sigma) \to \PSL(\mathbb{R}).$$ G. Mess proved \cite[Proposition 19]{mess2007lorentz} that these holonomies have maximal Euler class $e$ (that is $\vert e(\rho_l)\vert=\vert e(\rho_r)\vert=2g-2$). Using Goldman's criterion \cite{MR952283}, he proved that these morphisms are Fuchsian holonomies and so define a pair of points in $\mathcal{T}(\Sigma)$.

Reciprocally, as two Fuchsian holonomies $\rho_1, \rho_2$ are conjugated by an orientation preserving homeomorphism $\phi: \mathbb{RP}^1\to \mathbb{RP}^1$ and as $\partial \AdS^3$ identifies with $\mathbb{RP}^1\times \mathbb{RP}^1$ (fixing a totally geodesic space-like plane $P_0$, see Remark \ref{boundary}), one can see the graph of $\phi$ as a closed curve in $\partial \AdS^3$. G. Mess proved that this curve is nowhere time-like and is contained in an affine chart. In particular, one can construct the convex hull of the graph of $\phi$. The holonomy $(\rho_1,\rho_2): \pi_1(\Sigma)\to \isom(\AdS^3)$ acts properly discontinuously on this convex hull and the quotient is a piece of globally hyperbolic AdS manifold. It follows from a Theorem of Y. Choquet-Bruhat and R. Geroch \cite{choquet} that this piece of  AdS globally hyperbolic manifold uniquely embeds in a maximal one. So the map $\Mess$ is a one-to-one.
\end{proof}

\subsection{Surfaces embedded in an AdS GHM 3-manifold}

K. Krasnov and J.-M. Schlenker  \cite[Section 3]{krasnov2007minimal} proved results about surfaces embedded in an AdS GHM manifold. Here we state some of these results. Recall that a space-like surface embedded in a Lorentzian manifold is maximal if its mean curvature vanishes everywhere. The following result was proved by T. Barbot, F. B\'eguin and A. Zeghib in \cite{barbot2007constant}:

\begin{theo}[Barbot, B\'eguin, Zeghib]
Every AdS GHM 3-manifold contains a unique maximal space-like surface.
\end{theo}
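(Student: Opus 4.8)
The plan is to treat existence and uniqueness separately, in both cases exploiting the convex core of the AdS GHM manifold together with the fact that, for a space-like graph, the vanishing of the mean curvature (equivalently, the tracelessness of the second fundamental form) is a quasilinear elliptic equation to which the classical machinery of barriers, a priori estimates and the maximum principle applies. Recall that maximal surfaces are the Lorentzian analogue of minimal surfaces, arising here as the space-like critical points (in fact local maxima) of the area functional, so the whole analysis is governed by the maximal surface operator acting on the time-separation between a surface and a reference Cauchy surface.

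For existence, first I would produce barriers from the boundary of the convex core $C(M)$ of $M$. Writing $M \cong \Sigma \times \mathbb{R}$ with a fixed time orientation, the two boundary components of $C(M)$ are achronal space-like pleated surfaces; because they bound a convex set, they furnish respectively an upper and a lower barrier for the maximal surface equation (after slightly smoothing them into nearby smooth convex surfaces, if one wants honest sub- and super-solutions). Since the Cauchy surface is compact, the region between the barriers projects to a compact part of $M$, and one can then invoke the theory of the prescribed mean curvature equation in Lorentzian manifolds with compact Cauchy surface, in the spirit of Bartnik and Gerhardt: given such barriers, the zero mean curvature equation admits a smooth space-like solution trapped between them. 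The delicate point is the a priori gradient estimate, namely a uniform bound on the Lorentzian tilt of the surface, which guarantees that the solution, and any limit of an approximating sequence, remains uniformly space-like instead of degenerating to a light-like surface.

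For uniqueness, suppose $S_1$ and $S_2$ are two maximal surfaces. Being closed space-like surfaces, they are both Cauchy surfaces, hence graphs $u_1, u_2$ over a fixed Cauchy surface $\Sigma_0$. I would consider the time-separation $u_1 - u_2$, which attains its maximum at some point $p$; there the two surfaces are tangent and one, say $S_1$, lies locally to the future of $S_2$. Since both have vanishing mean curvature, the geometric maximum principle for the mean curvature operator forces $S_1$ and $S_2$ to coincide in a neighborhood of $p$. A standard argument showing that the coincidence set is at once nonempty (by the previous step), open (by the maximum principle) and closed (by continuity) then yields $S_1 = S_2$ globally.

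The main obstacle I expect is the gradient, or tilt, estimate in the existence step: trapping the surface between the two barriers controls its position but not, a priori, the degree to which it can become null, and it is precisely the uniform space-like estimate that both makes the elliptic theory applicable and validates the limiting argument. Here the compactness of the Cauchy surface, together with the negative curvature of $\AdS^3$ and the convexity of the barriers, is what one must leverage to close the estimate.
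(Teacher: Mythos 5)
First, note that the paper does not prove this statement itself: it is quoted from \cite{barbot2007constant}, and what the paper actually proves is the analogue with particles (Propositions \ref{existence} and \ref{uniq}). Your existence argument is essentially the same as the strategy used there and in the original reference: barriers obtained from (equidistant surfaces to) the two boundary components of the convex core, followed by Gerhardt-type existence theory for the maximal surface equation, with the uniform space-like (tilt) estimate as the key analytic point. That part is sound, modulo the degenerate Fuchsian case where the convex core collapses to a single totally geodesic surface (which is then itself the maximal surface).

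The uniqueness step, however, has a genuine gap. You take the maximum of the time-separation $u_1-u_2$ over a reference Cauchy surface and assert that at such a point the two surfaces are tangent with one locally on one side of the other. If $\max(u_1-u_2)>0$, the two surfaces pass through \emph{different} points of $M$ above the maximizing base point, so there is no tangency and the geometric maximum principle does not apply; your open-and-closed argument then never gets started when $S_1$ and $S_2$ are disjoint. This is not a removable technicality: in a metric product $\Sigma\times\mathbb{R}$ with metric $-dt^2+h$ every slice $\{t=c\}$ is maximal, so disjoint maximal Cauchy surfaces cannot be excluded by the maximum principle alone --- some curvature input is indispensable. The missing ingredient is the (strict) timelike convergence condition satisfied by AdS, exploited via the Lorentzian distance between the two surfaces: one takes a timelike geodesic $\gamma$ realizing the supremum of the causal length between $S_1$ and $S_2$, observes that $\gamma$ is orthogonal to both, and compares the second fundamental forms at its endpoints with those of the equidistant surfaces from the tangent totally geodesic planes, which are \emph{strictly} convex in AdS; this forces the mean curvature to change sign along $\gamma$ and contradicts maximality of both surfaces. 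This is precisely the argument of Proposition \ref{uniq} in the paper (and of Barbot--B\'eguin--Zeghib), and it is the piece your proposal needs but does not supply.
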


In \cite{krasnov2007minimal}, the authors give an explicit formula for the Mess parametrization $\Mess$:

\begin{theo}[Krasnov, Schlenker]
Let $S$ be a space-like surface embedded in an AdS GHM manifold $M$ whose principal curvatures are in $(-1,1)$. We denote by E the identity map, $J$ the complex structure on S (associated to the induced metric), $B$ its shape operator and $\I$ its first fundamental form. We have:
$$\Mess(M)=(g_1,g_2),$$
where $g_{1,2}(x,y)=\I((E\pm JB)x,(E\pm JB)y)$.
\end{theo}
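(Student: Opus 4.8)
The plan is to use the Lie-group model of anti-de Sitter geometry. Under the standard identification of $\AdS^3$ with $\PSL(\mathbb{R})$ carrying a suitable rescaling of its Killing form --- a bi-invariant Lorentzian metric of constant curvature $-1$ --- the group $\isom(\AdS^3)\cong\PSL(\mathbb{R})\times\PSL(\mathbb{R})$ acts by $(a,b)\cdot x=axb^{-1}$. The tangent space at the identity is $\mathfrak{sl}_2(\mathbb{R})\cong\mathbb{R}^{2,1}$, and $\mathbb{H}^2$ is realized as the hyperboloid of unit future-timelike vectors. Lifting $S$ to the universal cover and letting $\nu$ be its future unit normal, I would define the left and right Gauss maps $G_L,G_R\colon\widetilde S\to\mathbb{H}^2$ by translating $\nu$ to the identity from the left and from the right. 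The entire statement then reduces to understanding these two maps: their equivariance identifies the holonomies, and their differentials produce the operators $E\pm JB$.

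First I would record equivariance. A direct computation with the action $(a,b)\cdot x=axb^{-1}$ shows that $G_L$ intertwines the $\pi_1$-action on $\widetilde S$ with the adjoint action of the left $\PSL(\mathbb{R})$-factor on $\mathbb{H}^2$, and $G_R$ with that of the right factor, where $\mathrm{Ad}\colon\PSL(\mathbb{R})\to\mathrm{SO}_0(2,1)\cong\PSL(\mathbb{R})$ is the isomorphism onto the orientation-preserving isometries of $\mathbb{H}^2$. Thus $G_L$ is equivariant for $\rho_1$ and $G_R$ for $\rho_2$, the two Fuchsian factors of the holonomy that define $\Mess(M)$. Here I would take care to check that the adjoint identification of the boundary action agrees with Mess's identification of $\partial\AdS^3$ with $\mathbb{RP}^1\times\mathbb{RP}^1$ through the two rulings, so that the labels $\rho_1,\rho_2$ match those appearing in the construction of $\Mess$.

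The computational heart --- and the step I expect to be the main obstacle --- is the differential of the Gauss maps. Using that the Levi-Civita connection of the bi-invariant metric is $\nabla_XY=\tfrac12[X,Y]$ on left-invariant fields, that $\mathrm{ad}(\nu)$ restricted to the spacelike plane $\nu^\perp$ squares to $-\,\mathrm{id}$ and therefore coincides with the complex structure $J$ determined by $\I$ and the orientation, and that Weingarten's formula gives $\nabla_v\nu=-Bv$, I would obtain, after translating back to $T_xS$,
$$dG_L(v)=(J-B)v,\qquad dG_R(v)=-(J+B)v,$$
the normalizations of the two metrics being precisely those that make the leading coefficient equal to $1$. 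Since $J$ is an $\I$-isometry with $J^2=-E$, the algebraic identities $J-B=J(E+JB)$ and $-(J+B)=-J(E-JB)$ let me pull back the metric of $\mathbb{H}^2$ (which matches $\I$ on $\nu^\perp$, left and right translations being isometries) to obtain
$$G_L^*g_{\mathbb{H}^2}=\I\big((E+JB)\cdot,(E+JB)\cdot\big),\qquad G_R^*g_{\mathbb{H}^2}=\I\big((E-JB)\cdot,(E-JB)\cdot\big),$$
that is, $g_1$ and $g_2$. The delicate points are getting the structure equations and signs of the bi-invariant metric right, confirming the orientation that makes $\mathrm{ad}(\nu)|_{\nu^\perp}=J$, and verifying that the leading coefficient is genuinely $1$ once both curvatures are normalized to $-1$.

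Finally I would invoke the hypothesis that the principal curvatures lie in $(-1,1)$. Since $J$ is $\I$-antisymmetric and $B$ is $\I$-symmetric one has $\mathrm{tr}(JB)=0$, hence $\det(E\pm JB)=1+\det B=1+\lambda_1\lambda_2\in(0,2)$, where $\lambda_1,\lambda_2\in(-1,1)$ are the principal curvatures. Thus $E\pm JB$ is everywhere invertible, $g_1,g_2$ are genuine Riemannian metrics, and $G_L,G_R$ are immersions; being $\rho_1$- and $\rho_2$-equivariant immersions into $\mathbb{H}^2$ from the compact quotient $\Sigma$, they are developing maps of complete hyperbolic structures. Hence $g_1,g_2$ are hyperbolic metrics with holonomies $\rho_1,\rho_2$, so their classes in $\mathcal{T}(\Sigma)$ are exactly the two coordinates of $\Mess(M)$, giving $\Mess(M)=(g_1,g_2)$. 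Alternatively, the fact that $g_1,g_2$ are hyperbolic could be checked by computing the curvature of $\I((E\pm JB)\cdot,(E\pm JB)\cdot)$ directly from the Gauss equation $K_\I=-1-\det B$ and the Codazzi equation $d^\nabla B=0$; but the Gauss-map argument has the advantage of simultaneously pinning down the holonomy, which is what identifies these metrics with the coordinates of $\Mess$.
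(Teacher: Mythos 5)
The paper does not actually prove this statement---it is quoted from Krasnov--Schlenker without proof---so there is no internal argument to compare against; your proposal is essentially the standard proof from that source. The route via the left and right Gauss maps of $\widetilde S$ into $\mathbb{H}^2\subset\mathfrak{sl}_2(\mathbb{R})$ is correct in all its main steps: the identities $J-B=J(E+JB)$ and $-(J+B)=-J(E-JB)$ together with the fact that $J$ and the translations are isometries give $G_{L}^*g_{\mathbb{H}^2}$ and $G_R^*g_{\mathbb{H}^2}$ as claimed; $\mathrm{tr}(JB)=0$ and $\det(E\pm JB)=1+\det B\in(0,2)$ give invertibility; and an equivariant immersion of the universal cover of a closed surface into $\mathbb{H}^2$ does pull back a complete hyperbolic metric whose holonomy is the corresponding Fuchsian factor, which is exactly what Mess's coordinates record. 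The one point you rightly flag and must actually carry out is the bookkeeping: with the action $(a,b)\cdot x=axb^{-1}$ the \emph{left}-translated normal is equivariant for the \emph{right} holonomy factor and vice versa, so the labels of $g_1$ and $g_2$ have to be matched against Mess's identification of $\partial\AdS^3$ with $\mathbb{RP}^1\times\mathbb{RP}^1$ via the two rulings; this is a finite sign/convention check, not a gap.
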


\begin{rem}
In particular, they proved that the metrics $g_1$ and $g_2$ are hyperbolic and do not depend of the choice of the surface $S$ (up to isotopy).
\end{rem}

If we denote by $\mathcal{H}(\Sigma)$ the space of maximal space-like surfaces in germs of AdS manifold, it is proved in \cite{krasnov2007minimal} (using the Fundamental Theorem of surfaces embedded in AdS manifolds) that this space is canonically identified with the space of couples $(g,h)$ where $g$ is a smooth metric on $\Sigma$ and $h$ is a symmetric bilinear form on $TS$ so that:
\begin{enumerate}
\item $tr_g(h)=0$.
\item $\delta_g h=0$ (where $\delta_g$ is the divergence operator associated to the Levi-Civita connection of $g$).
\item $K_g = -1-\det_g(h)$ (where $K_g$ is the Gauss curvature). We call this equation \textbf{modified Gauss' equation}.
\end{enumerate}

We recall a theorem of Hopf \cite{hopf1950flachen}:

\begin{theo}[Hopf]
Let $g$ be a Riemannian metric on $\Sigma$ and $h$ a bilinear symmetric form on $T\Sigma$, then:
\begin{itemize}
\item[i.] $tr_g(h)=0$ if and only if $h$ is the real part of a quadratic differential $q$ on $(\Sigma,g)$
\item[ii.] If i. holds, then $\delta_g h=0$ if and only if $q$ is holomorphic with respect to the complex structure associated to $g$.
\item[iii.] if i. and ii. hold, then $g$ (respectively $h$) is the first (respectively second) fundamental form of a maximal surface if and only if $K_g = -1-\det_g(h)$.
\end{itemize}
\end{theo}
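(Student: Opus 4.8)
The plan is to reduce i and ii to elementary computations in a local conformal coordinate, and to obtain iii from the Fundamental Theorem of surface theory. Fix a local coordinate $z=x+iy$ in which $g=e^{2\lambda}|dz|^2$, and split the real symmetric $2$-tensor $h$ into its $(2,0)$, $(1,1)$ and $(0,2)$ parts relative to the complex structure determined by $g$, writing $h=\phi\,dz^2+\psi\,dz\,d\bar z+\bar\phi\,d\bar z^2$ with $\phi$ complex and $\psi$ real (reality of $h$ forces the $d\bar z^2$-coefficient to equal $\bar\phi$). Since $h$ is globally defined and this splitting is intrinsic to the conformal class of $g$, the $(2,0)$-part $q:=2\phi\,dz^2$ is a globally well-defined quadratic differential. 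For i, I would compute the trace: the only nonzero components of $g$ are $g_{z\bar z}=g_{\bar z z}=\tfrac12 e^{2\lambda}$, giving $\mathrm{tr}_g h=2e^{-2\lambda}\psi$; hence $\mathrm{tr}_g h=0$ is equivalent to $\psi=0$, i.e. to $h=\phi\,dz^2+\bar\phi\,d\bar z^2=\mathrm{Re}(q)$.

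For ii, I would compute the divergence using the Christoffel symbols of a conformal metric, of which only $\Gamma^z_{zz}=2\partial_z\lambda$ and $\Gamma^{\bar z}_{\bar z\bar z}=2\partial_{\bar z}\lambda$ are nonzero. Assuming i holds, so $\psi=0$ and $h_{z\bar z}=h_{\bar z z}=0$, all the Christoffel contributions to $\nabla h$ are multiplied by vanishing components of $h$, and a short computation yields $(\delta_g h)_z=-2e^{-2\lambda}\partial_{\bar z}\phi$, with the $\bar z$-component its complex conjugate. Therefore $\delta_g h=0$ if and only if $\partial_{\bar z}\phi=0$, that is, if and only if $q$ is holomorphic for the complex structure associated to $g$.

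For iii, I would invoke the Fundamental Theorem for space-like surfaces in a Lorentzian space-form of curvature $-1$: a pair $(g,h)$ with $g$ Riemannian and $h$ symmetric is realized, locally and uniquely up to ambient isometry, as the first and second fundamental forms of such a surface precisely when the Gauss and Codazzi equations hold. Writing $h=\I(B\cdot,\cdot)$ for the shape operator $B=g^{-1}h$, so that $\det_g h=\det B$ and the mean curvature is $\mathrm{tr}_g h$, the surface is maximal exactly when i holds. The Codazzi equation $d^\nabla h=0$ reduces, for a traceless $h$ on a surface, to $\delta_g h=0$ (equivalently to holomorphicity of $q$), which is ii. The Lorentzian Gauss equation reads $K_g=-1-\det B=-1-\det_g h$, the sign differing from the Riemannian case because the normal is time-like. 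Thus, once i and ii are assumed, the sole remaining integrability condition is the modified Gauss equation, so $(g,h)$ are the first and second fundamental forms of a maximal surface if and only if $K_g=-1-\det_g h$; necessity is the derivation of the Gauss equation from the structure equations, and sufficiency is the Bonnet-type existence statement.

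The hard part is bookkeeping rather than conceptual. I expect the main care to be needed in fixing the Lorentzian sign in the Gauss equation and in verifying that, in dimension two and for a traceless tensor, the Codazzi identity $d^\nabla h=0$ collapses exactly to the single divergence equation $\delta_g h=0$. Granting these two points, conditions i, ii together with the modified Gauss equation exhaust the classical Gauss--Codazzi integrability conditions, which is what makes the three statements fit together.
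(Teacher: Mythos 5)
Your argument is correct, but note that the paper itself offers no proof of this statement: it is quoted as a classical result of Hopf with a citation to \cite{hopf1950flachen}, so there is no in-paper proof to compare against. Your route is the standard one and all the key points check out. The conformal-coordinate decomposition $h=\phi\,dz^2+\psi\,dz\,d\bar z+\bar\phi\,d\bar z^2$ correctly reduces i.\ to $\psi=0$; with $\psi=0$ the only surviving term in the divergence is indeed $-2e^{-2\lambda}\partial_{\bar z}\phi$ because the mixed Christoffel symbols of a conformal metric vanish, which gives ii.; and for iii.\ you correctly identify the two subtle points, namely that the time-like normal flips the sign in the Gauss equation to $K_g=-1-\det_g h$, and that for a trace-free symmetric $2$-tensor on a surface the Codazzi condition $d^\nabla h=0$ is equivalent to $\delta_g h=0$ (both being equivalent to holomorphicity of $q$), so that once i.\ and ii.\ hold the modified Gauss equation is the sole remaining integrability condition in the Bonnet-type existence and uniqueness theorem for space-like surfaces in a Lorentzian space form. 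The one place where you should be explicit rather than deferential to ``bookkeeping'' is the equivalence $d^\nabla h=0\iff\delta_g h=0$ for traceless $h$ in dimension two, since that is exactly the content linking ii.\ to the hypotheses of the fundamental theorem; a one-line verification in the same conformal coordinates (both conditions reduce to $\partial_{\bar z}\phi=0$) closes that gap.
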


Moreover, it is proved in \cite[Lemma 3.6.]{krasnov2007minimal} that for every conformal class $\frak{c}$ on $\Sigma$ and every $h$ real part of a holomorphic quadratic differential $q$ on $(\Sigma,J_\frak{c})$ (where $J_\frak{c}$ is the complex structure associated to $\frak{c}$), there exists a unique metric $\mathrm{g}_0\in\frak{c}$ such that modified Gauss' equation is satisfied. 

This result provides a canonical parametrization of $\mathcal{H}(\Sigma)$ by $T^*\mathcal{T}(\Sigma)$. In this parametrization, $h$ is the real part of a holomorphic quadratic differential, and $\mathrm{g}_0\in\frak{c}$ is the unique metric verifying $K_{\mathrm{g}_0}=-1-\det_{\mathrm{g}_0}(h)$. In addition, such a surface has principal curvatures in $(-1,1)$ \cite[Lemma 3.11.]{krasnov2007minimal}.

As every AdS GHM manifold contains a unique maximal surface, there is a parametrization $\phi: \mathcal{M}(\Sigma) \longrightarrow T^*\mathcal{T}(\Sigma)$ \cite[Theorem 3.8]{krasnov2007minimal}. Hence, we get an application associated to the Mess parametrization: $$\varphi:=\phi\circ\Mess^{-1} : T^*\mathcal{T}(\Sigma) \to \mathcal{T}(\Sigma)\times\mathcal{T}(\Sigma).$$

\section{AdS convex GHM 3-manifolds with particles}\label{adsparticles}

In this section we define the AdS convex GHM manifolds with particles and recall the parametrization of the moduli space of such structures. The proofs of these results can be found in \cite{krasnov2007minimal} and \cite{bonsante2009ads}.

\subsection{Extension of Mess' parametrization}\label{extensionmessparam}

First, we are going to define the singular AdS space of dimension 3 in order to define the AdS convex GHM manifolds with particles.

\begin{Def}
Let $\theta>0$, we define $\AdS^3_\theta:=\{(t,\rho,\varphi)\in \mathbb{R}\times\mathbb{R}_{\geq 0}\times[0,\theta)\}$ with the metric:
$$g_\theta=-\cosh^2\rho dt^2 + d\rho^2+\sinh^2\rho d\varphi^2.$$
\end{Def}

\begin{rem}
\begin{itemize}
\item $\AdS^3_\theta$ can be obtained by cutting the universal cover of $\AdS^3$ along two time-like planes intersecting along the line $l:=\{\rho=0\}$, making an angle $\theta$, and gluing the two sides of the angular sector of angle $\theta$ by a rotation fixing $l$. A simple computation shows that, outside of the singular line, $\AdS^3_\theta$ is a Lorentz manifold of constant curvature -1, and $\AdS^3_\theta$ carries a conical singularity of angle $\theta$ along $l$.
\item In the neighborhood of the totally geodesic plane $P_0:= \{t=0\}$ given by the points at a causal distance less than $\pi/2$ from $P_0$, the metric $g_\theta$ also expresses
$$g_\theta=-dt^2 + \cos^2t(d\rho^2+\sinh^2\rho d\varphi^2).$$
\end{itemize}
\end{rem}

\begin{Def}
An AdS cone-manifold is a (singular) Lorentzian 3-manifold $(M,g)$ in which any point $x$ has a neighborhood isometric to an open subset of $\AdS^3_\theta$ for some $\theta>0$. If $\theta$ can be taken equal to $2\pi$, $x$ is a smooth point, otherwise $\theta$ is uniquely determined.
\end{Def}

To define the global hyperbolicity in the singular case, we need to define the orthogonality to the singular locus:

\begin{Def}
Let $S\subset \AdS^3_\theta$ be a space-like surface which intersect the singular line $l$ at a point $x$. $S$ is said to be orthogonal to $l$ at $x$ if the causal distance (that is the ``distance'' along a time-like line) to the totally geodesic plane $P$ orthogonal to the singular line at $x$ is such that:
$$\lim\limits_{y\to x,y\in S}\frac{d(y,P)}{d_S(x,y)}=0$$
where $d_S(x,y)$ is the distance between $x$ and $y$ along $S$.

Now, a space-like surface $S$ in an AdS cone-manifold $(M,g)$ which intersects a singular line $d$ at a point $y$ is said to be orthogonal to $d$ if there exists a neighborhood $U\subset M$ of $y$ isometric to a neighborhood of a singular point in $\AdS^3_\theta$ such that the isometry sends $S\cap U$ to a surface orthogonal to $l$ in $\AdS^3_\theta$.
\end{Def}

Now we are able to define the AdS convex GHM manifolds with particles.

\begin{Def} An  AdS convex GHM manifold with particles is an AdS cone-manifold $(M,g)$ which is homeomorphic to $\Sigma_\fkp\times \mathbb{R}$ (where $\Sigma_\fkp$ is a closed oriented surface with $n$ marked points), such that the singularities are along time-like lines $d_1,...,d_n$ and have fixed cone angles $\theta_1,..,\theta_n$ with $\theta_i<\pi$. Moreover, we impose two conditions:

\begin{enumerate}
\item\textbf{Convex Global Hyperbolicity} $M$ contains a space-like future-convex Cauchy surface orthogonal to the singular locus.
\item\textbf{Maximality} $M$ cannot be strictly embedded in another manifold satisfying the same conditions.
\end{enumerate}
\end{Def}

\begin{rem}
The condition of convexity in the definition will allow us to use a convex core. As pointed out by the authors in \cite{bonsante2009ads}, we do not know if every AdS GHM manifold with particles is convex GHM.
\end{rem}

\begin{Def}
For $\theta:=(\theta_1,...,\theta_n)\in (0,\pi)^n$, let $\M$ be the space of isotopy classes of AdS convex GHM metrics on $M=\Sigma_\fkp \times \mathbb{R}$ with particles of cone angles $\theta_i$ along $d_i$. 
\end{Def}

Many results known in the non-singular case extend to the singular case (that is with particles of angles less than $\pi$). We recall some of them here (see \cite{bonsante2009ads}, \cite{krasnov2007minimal}):
\begin{enumerate}
\item The parametrization $\Mess$ defined above extends to the singular case. Namely, we have a parametrization $\Mess_\theta: \M\longrightarrow \T\times \T$ which corresponds to Mess' parametrization when there is no particle.
\item Each AdS convex GHM 3-manifold with particles $(M,g)$ contains a minimal non-empty convex subset called its ''convex core'' whose boundary is a disjoint union of two pleated space-like surfaces orthogonal to the singular locus (except in the Fuchsian case which corresponds to the case where the two metrics of the parametrization are equal. In this case, the convex core is a totally geodesic space-like surface).

\end{enumerate}

\begin{rem}
The analogy between AdS GHM geometry and quasi-Fuchsian geometry explained in the introduction extends to the case with particles. Namely, it is proved in \cite{conebend} and \cite{qfmp} that there exists a parametrization of the moduli space of quasi-Fuchsian manifolds with particles which extends Bers' parametrization. 
\end{rem}

\subsection{Maximal surface}

Let $g\in \M$ be an AdS convex GHM metric with particles on $M=\Sigma_\fkp\times \mathbb{R}$.

\begin{Def}
A maximal surface in $(M,g)$ is a locally area-maximizing space-like Cauchy surface $S\hookrightarrow (M,g)$ which is orthogonal to the singular lines.
\end{Def}

In particular, such a maximal surface $S\hookrightarrow (M,g)$ has everywhere vanishing mean curvature. Note that our definition differs from  \cite[Definition 5.6]{krasnov2007minimal} where the authors impose the boundedness of the principal curvatures of $S$. The following Proposition shows that a maximal surface in our sense has bounded principal curvatures:

\begin{prop}\label{simplepoles}
For a maximal surface $S\hookrightarrow (M,g)$ with shape operator $B$ and induced metric $g_S$, $\det_{g_S}(B)$ tends to zero at the intersections with the particles. In particular, $B$ is the real part of a meromorphic quadratic differential with at most simple poles at the singularities.
\end{prop}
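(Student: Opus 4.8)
The plan is to localise near a single particle and reduce both assertions to a growth estimate for a holomorphic quadratic differential. Fix a particle of angle $\theta_i<\pi$ and set $\alpha:=\theta_i/(2\pi)<1/2$. Since $S$ is maximal, away from the particles its second fundamental form $\II$ is traceless and satisfies the Codazzi equation, so by Hopf's theorem $B$ is the real part of a quadratic differential $q$ which is holomorphic, for the complex structure $J$ of the induced metric $g_S$, on a punctured disc $\{0<|z|<\varepsilon\}$ around the intersection point. Because $S$ is orthogonal to the singular line, $g_S$ has a conical singularity of angle $\theta_i$ there, so in a conformal coordinate $z$ for $J$ one has $g_S=e^{2w}|z|^{2(\alpha-1)}|dz|^2$ with $w$ bounded. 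Writing $q=\phi\,dz^2$, a direct computation of the determinant of the endomorphism $B=g_S^{-1}\II$ gives
$$\det_{g_S}(B)=-e^{-4w}\,|\phi|^2\,|z|^{4(1-\alpha)}.$$
Thus both statements reduce to controlling $|\phi|$ as $z\to 0$, and the central fact to be proved is that $\det_{g_S}(B)\to 0$.

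To establish $\det_{g_S}(B)\to 0$ I would work in the local model $\AdS^3_{\theta_i}$, using the expression $g_{\theta_i}=-dt^2+\cos^2 t\,(d\rho^2+\sinh^2\rho\,d\varphi^2)$ valid in a neighbourhood of the orthogonal totally geodesic plane $P_0=\{t=0\}$. Writing $S$ as a graph $\{t=f(\rho,\varphi)\}$ over $P_0$ near the singular line, the orthogonality condition says precisely that $f(\rho,\varphi)=o(\rho)$ as $\rho\to 0$, i.e. that $S$ is tangent at the singular point to the totally geodesic plane $P_0$, whose shape operator vanishes identically. Maximality, $\mathrm{tr}_{g_S}(\II)=0$, is a quasilinear elliptic equation for $f$ having a regular singular point at the cone tip, and it is here that the angle condition $\theta_i<\pi$, i.e. $\alpha<1/2$, enters. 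Combining the decay of $f$ coming from orthogonality with elliptic regularity adapted to the conical background, I would produce an asymptotic expansion of $f$, hence of $B$, showing that the two opposite principal curvatures $\pm\kappa$ of the maximal surface tend to $0$ at the particle. Since $\det_{g_S}(B)=-\kappa^2$, this gives $\det_{g_S}(B)\to 0$.

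Granting this, the statement about poles is a short Laurent-coefficient argument. From $\det_{g_S}(B)\to 0$ and the boundedness of $w$ one gets $|\phi(z)|=o\big(|z|^{-(2-2\alpha)}\big)$. For the holomorphic function $\phi$ on $\{0<|z|<\varepsilon\}$ the Cauchy estimate $|a_k|\le \big(\max_{|z|=r}|\phi|\big)\,r^{-k}=o\big(r^{-(2-2\alpha)-k}\big)$ forces $a_k=0$ whenever $(2-2\alpha)+k<0$, that is for every $k\le -2$; here I use that $\theta_i<\pi$ gives $1<2-2\alpha<2$, so that $k=-1$ remains admissible but $k\le -2$ is excluded. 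Hence $q$ has no essential singularity and extends meromorphically across the particle with at most a simple pole, which is the second assertion. (The computation also shows the two conclusions are essentially equivalent: a pole of order $\ge 2$ would force $\det_{g_S}(B)\to-\infty$.)

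The main obstacle is the middle step: passing from the weak, essentially first-order information furnished by the orthogonality condition to genuine $C^2$, i.e. curvature, control of $S$ at the particle. The difficulty is that the maximal surface operator degenerates at the cone tip, so interior estimates do not apply directly; one must instead use Schauder-type regularity theory on the conical background, of the kind developed for harmonic maps into hyperbolic surfaces with cone singularities, while carefully tracking how the hypothesis $\alpha<1/2$ constrains the admissible singular behaviour through the indicial roots of the linearised equation. Everything else, namely Hopf's theorem, the determinant identity, and the Laurent estimate, is routine once this local analysis is in place.
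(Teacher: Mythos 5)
Your proposal has a genuine gap at exactly the point you flag as ``the main obstacle'': the claim that the principal curvatures of $S$ tend to zero at the particle is never established, only described as something one would obtain from ``elliptic regularity adapted to the conical background.'' That second-order asymptotic control of the graph function at the cone tip is the entire content of the proposition in your logical arrangement, and it is not routine. In fact even your earlier assertion that $g_S$ carries a conical singularity of angle $\theta_i$ ``because $S$ is orthogonal to the singular line'' already requires more than the orthogonality condition $f=o(\rho)$: one needs $\Vert\nabla f\Vert=o(1)$, which the paper obtains by a rescaling argument (blowing up disks $D_r$ at distance $2r$ from the axis to unit disks with metrics converging to the flat one, and applying Schauder interior estimates with uniformly controlled constants). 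So both the $C^1$ and the $C^2$ decay you rely on are unproved in your write-up, and the $C^2$ decay is precisely what you would need to close the argument.

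The paper avoids this difficulty by reversing the order of the two conclusions. After establishing only the gradient decay and hence the conical structure $g_S=e^{2u}\vert z\vert^{2(\theta/2\pi-1)}\vert dz\vert^2$, it assumes $q$ has a pole of order $n$, computes $\det_{g_S}(B)=-e^{v}\vert z\vert^{-2(\theta/\pi-2+n)}$ (your determinant identity, with the pole order made explicit), and then invokes the modified Gauss equation $K_S=-1-\det_{g_S}(B)$ together with the Gauss--Bonnet formula for cone surfaces: local integrability of $K_S\,dvol_S$ forces $\theta/2\pi-1+n<1$, hence $n\le 1$. The decay $\det_{g_S}(B)=O\big(\vert z\vert^{2(1-\theta/\pi)}\big)\to 0$ then follows from $n\le 1$ and $\theta<\pi$, rather than being an input. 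This integrability argument is the key idea missing from your proposal; it substitutes a soft global constraint for the hard pointwise curvature asymptotics, and it is why the paper never needs the asymptotic expansion of $f$ that your route depends on. Your final Cauchy-estimate step is correct but becomes unnecessary once the pole order is bounded this way.
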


\begin{proof}
Let $d$ be a particle of angle $\theta$ and set $0:=d\cap S$. We see locally $S$ as the graph of a function $u: P_0\longrightarrow \mathbb{R}$ where $P_0$ is the (piece of) totally geodesic plane orthogonal to $d$ at $0$. We will show that, the induced metric $g_S$ on $S$ carries a conical singularity of angle $\theta$.

Recall that a metric $g$ on a surface carries a conical singularity of angle $\theta$ if there exists complex coordinates $z$ centered at the singularity so that
$$g= e^{2u}\vert z\vert^{2\left(\theta/2\pi -1\right)}\vert dz\vert^2,$$
where $u$ is a bounded function. We need the following lemma:

\begin{lemma}
The gradient of $u$ tends to zero at the intersections with the particles.
\end{lemma}
\begin{proof}
To prove this lemma, we will use Schauder estimates for solutions of uniformly elliptic PDE's. For the convenience of the reader, we recall these estimates. The main reference for the theory is \cite{gilbarg}.

A second order linear operator $L$ on a domain $\Omega\subset \mathbb{R}^n$ is a differential operator of the form
$$Lu= a^{ij}(x)D_{ij}u+ b^k(x)D_k u + c(x)u,~u\in \mathcal{C}^2(\Omega),~x\in \Omega,$$
where we sum over all repeated indices. We say that $L$ is uniformly elliptic if the smallest eigenvalue of the matrix $\big(a_{ij}(x)\big)$ is bounded from below by a strictly positive constant.

We finally define the following norms for a function $u$ on $\Omega$:
\begin{itemize}
\item $\vert u \vert_k:= \Vert u \Vert_{\mathcal{C}^k(\Omega)}.$
\item $\vert u \vert^{(i)}_0 := \underset{x\in\Omega}{\sup}~ d_x^i \vert u(x)\vert,~\text{where }d_x=\text{dist}(x,\partial\Omega).$
\item $ \vert u \vert^*_k= \sum_{i=0}^k \underset{x\in \Omega,~\vert\alpha\vert=i}{\sup} d_x^i\vert D^\alpha u\vert$.
\end{itemize}
The following theorem can be found in \cite[Theorem 6.2]{gilbarg}
\begin{theo}\label{schauder}(Schauder interior estimates)
Let $\Omega\subset \mathbb{R}^n$ be a domain with $\mathcal{C}^2$ boundary and $u\in \mathcal{C}^2(\Omega)$ be solution of the equation
$$L u=0$$
where $L$ is uniformly elliptic so that
$$\left\vert a^{ij}\right\vert_0^{(0)},~\left\vert b^k\right\vert^{(1)}_0,~\vert c\vert^{(2)}_0 < \Lambda.$$
Then there exists a positive constant $C$ depending only on $\Omega$ and $L$ so that
$$\vert u\vert^*_2 \leq C \vert u\vert_0.$$
\end{theo}

For every domain $\Omega\subset P_0$ which does not contain the singular point, $u$ satisfies the maximal surface equation (see for example \cite{gerhardt1983h}) which is given by:
$$\mathcal{L}(u):=\text{div}_{g_S}\big(v(-1,\pi^*\nabla u)\big)=0.$$
Here, $\pi: S \longrightarrow P_0$ is the orthogonal projection, $v=\big( 1-\Vert \pi^*\nabla u \Vert^2\big)^{-1/2}$ and so $v(-1,\pi^*\nabla u)$ is the unit future pointing normal vector field to $S$. Also, one easily checks that this equation can be written
\begin{equation}\label{2max}
\text{div}_{g_S}(v \pi^*\nabla u)+ a(x,u,\nabla u)=0, \text{ for some function }a.
\end{equation}
The proof of Proposition \ref{spacelike} applies in this case and implies the $S$ is uniformly space-like. It follows that  $\pi$ is uniformly bi-Lipschitz and so $v$ is uniformly bounded.

It follows that Equation (\ref{2max}) is a quasi-linear elliptic equation in the divergence form. Moreover, if we write it in the following way:
$$a^{ij}(x,u,Du) D_{ij}u + b^k(x,u,Du)D_ku + c(x,Du,u)u=0,$$
it is easy to see that the equation is uniformly elliptic (in fact $a^{ij}(x,u,Du)\geq 1$) and the coefficients satisfy conditions of Theorem \ref{schauder} (as they are uniformly bounded on $\Omega$). Hence, we are in the good framework to apply the Schauder estimates.

Let $x_0\in P_0\setminus\{0\}$ and let $2r:=\text{dist}_S(x_0,0)$. Consider the disk $D_r$ of radius $r$ centered at $x_0$. It follows from the previous discussion that  $u: D_r \longrightarrow \mathbb{R}$ satisfies $\mathcal{L}u=0$. By a homothety of ratio $1/r$, send the disk $D_r$ to the unit disk $(D,h_r)$ where $h_r$ is the metric of constant curvature $-r^2$. The function $u$ is sent to a new function
$$u_r : (D,h_r) \longrightarrow \mathbb{R},$$
and satisfies the equation 
$$\mathcal{L}_r u_r=0.$$
Here, the operator $\mathcal{L}_r$ is the maximal surface operator for the rescaled metric $g_r:= -dt^2+\cos^2t. h_r$. In particular, $\mathcal{L}_r$ is a quasi-linear uniformly elliptic operator whose coefficients applied to $u_r$ satisfy the condition of Theorem \ref{schauder}.

In a polar coordinates system $(\rho,\varphi)$, the metric $h_r$ expresses
$$h_r=d\rho^2+r^{-2}\sinh^2 (r.\rho)d\varphi^2.$$
As $r$ tends to zero, the metric $h_r$ converges $\mathcal{C}^\infty$ on $D$ to the flat metric $h_0=d\rho^2+\rho^2d\varphi^2$. It follows that the coefficients of the family of operators $(\mathcal{L}_r)_{r\in (0,1)}$ applied to $u_r$ converge to the ones of the operator $\mathcal{L}_0$ applied to $u_0=\underset{r\to 0}{\lim} u_r$ where $\mathcal{L}_0$ is the maximal surface operator associated to the metric $g_0=-dt^2+\cos^2t h_0$. 

As a consequence, the family of constants $\{C_r\}$ associated to the Schauder interior estimates applied to $\mathcal{L}_r(u_r)$ are uniformly bounded by some $C>0$.

Now, to obtain a bound on the norm of the gradient $\Vert \nabla u\Vert$ at a point $x_0$ at a distance $2r$ from the singularity, we apply the Schauder interior estimates to $\mathcal{L}_r(u_r)$, where $u_r : (D,h_r) \longrightarrow \mathbb{R}$. We get
$$\vert u_r\vert^*_2 \leq C_r \vert u_r\vert_0 \leq C \vert u_r\vert_0.$$
As $\Vert \nabla u_r \Vert(x_0)\leq \vert u \vert^*_2$, and as $u_r(x_0)= o(2r)$ (because $S$ is orthogonal to $d$), we obtain
$$\Vert \nabla u_r \Vert (x_0) \leq C. o(r).$$
But as $u_r$ is obtained by rescaling $u$ with a factor $r$, so $\Vert \nabla u \Vert = r^{-1} \Vert \nabla u_r \Vert$ and we finally get:
$$\Vert \nabla u\Vert = o(1).$$
\end{proof}
\begin{lemma}
The induced metric $g_S$ on $S$ carries a conical singularity of angle $\theta$ at its intersection with the particle $d$.
\end{lemma}

\begin{proof}
Recall that (see \cite[Section 2.2]{mazzeo1} and \cite[Section 2.1]{mazzeo2}) a metric $h$ carries a conical singularity of angle $\theta$ if and only if there exists normal polar coordinates $(\rho,\varphi)\in \mathbb{R}_{>0}\times [0,2\pi)$ around the singularity so that
$$g=d\rho^2+ f^2(\rho,\varphi)d\varphi^2,~ \frac{f(\rho,\varphi)}{\rho}\underset{\rho\to 0}{\longrightarrow} \theta/2\pi.$$
That is, if $g$ can be written by the matrix
$$g= \left( \begin{array}{ll} 1 & 0 \\ 0 & \left(\frac{\theta}{2\pi}\right)^2\rho^2 + o(\rho^2) \end{array}\right).$$
The metric of $(M,g)$ can be locally written around the intersection of $S$ and the particle $d$ by
$$g=-dt^2 + \cos^2t h_\theta,$$
where $h_\theta=d\rho^2+\left(\frac{\theta}{2\pi}\right)^2\sinh^2\rho d\varphi^2$ is the metric of $\mathbb{H}^2_\theta$.

Setting $t=u(\rho,\varphi)$, with $u(\rho,\varphi)=o(\rho)$ and $\Vert \nabla u\Vert=o(1)$, we get
$$dt^2 = (\partial_\rho u)^2d\rho^2+2\partial_\rho u\partial_\varphi u d\rho d\varphi+ (\partial_\varphi u)^2d\varphi^2.$$
Note that, as $\Vert \nabla u\Vert= o(1)$, $\partial_\rho u = o(1)$ and $\partial_\varphi u = o(\rho)$. 

Finally, using $\cos^2(u)= 1+o(\rho^2)$, we get the following expression for the induced metric on $S$:
$$g_S= \left(\begin{array}{ll} 1 + o(1) & o(\rho) \\ o(\rho) & \left(\frac{\theta}{2\pi}\right)^2\rho^2 + o(\rho^2) \end{array}\right).$$
One easily checks that, with a change of variable, the induced metric carries a conical singularity of angle $\theta$ at the intersection with $d$.
\end{proof}

Now the proof of Proposition \ref{simplepoles} follows: suppose the second fundamental form $\II=g_S(B.,.)$ is the real part of a meromorphic quadratic differential $q$ with a pole of order $n$. In complex coordinates, write $q=f(z)dz^2$ and $g_S= e^{2u}\vert z\vert^{2\left(\theta/2\pi -1\right)}\vert dz\vert^2$ where $u$ is bounded. Then $B$ is the real part of the harmonic Beltrami differential 
$$\mu := \frac{\overline{q}}{g_S}=e^{-2u}\vert z\vert^{-2(\theta/2\pi-1)} \overline{f}(z) d\overline{z}\partial_z.$$
Using the real coordinates $z=x+iy,~dz=dx+idy,~\partial_z= \frac{1}{2}(\partial_x-i\partial_y)$ we get

\begin{eqnarray*}
B & = & \Re\left( \frac{1}{2}e^{-2u}\vert z\vert^{-2(\theta/2\pi-1)}\big(\Re(f)-i\Im(f)\big)(dx-idy)(\partial_x-i\partial_y)\right) \\
& = & \frac{1}{2}e^{-2u}\vert z\vert^{-2(\theta/2\pi-1)}\big(\Re(f)(dx\partial_x-dy\partial_y)- \Im(f)(dx\partial_y-dy\partial_x)\big) \\
& = & \frac{1}{2} e^{-2u}\vert z \vert^{-2(\theta/2\pi-1)}\left(\begin{array}{ll} \Re(f) & -\Im(f) \\ -\Im(f) & -\Re(f) \end{array}\right).
\end{eqnarray*}
It follows that 
$$\text{det}_{g_S}(B)=-\frac{1}{4}e^{-4u}\vert z \vert^{-4(\theta/2\pi-1)} \vert f\vert^2= -e^v \vert z\vert^{-2(\theta/\pi -2 + n)},$$
for some bounded $v$. By (modified) Gauss equation, the curvature $K_S$ of $S$ is given by
$$K_S=-1-\text{det}_{g_S}(B).$$
By Gauss-Bonnet formula for surface with cone singularities (see for example \cite{troyanov}), $K_S$ has to be locally integrable. But we have:
$$K_s dvol_S =\big( -1 + e^w\vert z\vert^{-2(\theta/2\pi-1+n)}\big)d\lambda,$$
where $d\lambda$ is the Lebesgue measure on $\mathbb{R}^2$. It follows that $K_s dvol_S$ is integrable if and only if $\theta/2\pi -1 + n <1$, that is $n\leq 1$. Note also that, for $n\leq 1$, $\det_{g_S}(B)= O\left(\vert z\vert^{2(1-\theta/\pi)}\right)$ and so tends to zero at the singularity.
\end{proof}

It is proved in \cite{krasnov2007minimal} that, as in the non-singular case, we can define the space $\HH$ of maximal surfaces in a germ of AdS convex GHM with $n$ particles of angles $\theta=(\theta_1,...,\theta_n)\in(0,\pi)^n$. This space is still parametrized by $T^*\T$. Recall that the cotangent space $T^*_g\T$ to $\T$ at a metric $g\in \T$ is given by the space of meromorphic quadratic differentials on $(\Sigma_\fkp,J_g)$ (where $J_g$ is the complex structure associated to $g$) with at most simple poles at the marked points.

Moreover, given $(g,h)\in \HH$, using the Fundamental Theorem of surfaces in AdS convex GHM manifolds with particles, one can locally reconstruct a piece of AdS globally hyperbolic manifold with particles which uniquely embeds in a maximal one. It provides a map from $\HH$ to $\M$. This map is bijective if and only if each AdS convex GHM manifold $(M,g)$ contains a unique maximal surface.

\section{Existence of a maximal surface}\label{conical}

In this section, we prove the existence part of Theorem \ref{surf}. Note that in the Fuchsian case (that is when the two metrics of the parametrization $\Mess_\theta$ are equal), the convex core is reduced to a totally geodesic plane orthogonal to the singular locus which is thus maximal (its second fundamental form vanishes). 

Hence, from now on, we consider an AdS convex GHM manifold with particles $(M,g)$, where $g\in \M$ is such that $\Mess_\theta(g)\in \T\times\T$ are two distinct points (that is $(M,g)$ is not Fuchsian). It follows from \cite[Section 5]{bonsante2009ads} that $(M,g)$ contains a convex core with non-empty interior. The boundary of this convex core is given by two pleated surfaces: a future-convex one and a past-convex one.

\begin{prop}\label{existence}
The AdS convex GHM manifold with particles $(M,g)$ contains a maximal surface $S\hookrightarrow (M,g)$.
\end{prop}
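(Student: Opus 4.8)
The plan is to produce the maximal surface as the limit of an area-maximizing sequence of space-like surfaces confined to the convex core, using the two boundary components of the core as barriers for the maximal surface equation $\mathcal{L}(u)=0$. I would first exploit the convex core $C$ whose existence is recalled above: its boundary consists of a future-convex pleated surface $\partial^+C$ and a past-convex pleated surface $\partial^-C$, both space-like and orthogonal to the singular locus. These play the role of barriers. A maximum principle argument for the quasi-linear operator $\mathcal{L}$ then shows that any space-like surface with vanishing mean curvature trapped between them cannot be tangent to either one at an interior point: at such a tangency the definite sign of the mean curvature of the convex barrier would contradict $H=0$. This confines the problem to the compact region $C$ and supplies the a priori $C^0$ (height) control, away from the singular lines where all the surfaces meet the particles.

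Next I would set up the variational problem equivariantly. Lifting to the universal cover and using the holonomy $(\rho_1,\rho_2)\colon\pi_1(\Sigma_\fkp)\to\isom(\AdS^3)$, I consider the class of $\pi_1(\Sigma_\fkp)$-invariant space-like Cauchy surfaces lying between $\partial^-C$ and $\partial^+C$ and orthogonal to the singular lines; the area computed over a fundamental domain is finite, and I take a maximizing sequence $(S_k)$. The decisive a priori estimate is that this sequence remains \emph{uniformly} space-like: the confinement between the two uniformly space-like barriers, together with the gradient bound of Proposition \ref{spacelike}, prevents the tangent planes of the $S_k$ from degenerating towards the light cone. This uniform space-like bound is exactly what renders the maximal surface operator uniformly elliptic, precisely as in the rescaling analysis used in the proof of Proposition \ref{simplepoles}.

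With uniform ellipticity in hand, I would apply the interior Schauder estimates of Theorem \ref{schauder} on domains avoiding the singular locus to upgrade the $C^1$ bounds to $C^{2,\alpha}$ bounds, extract a convergent subsequence, and obtain a limit surface $S$. Being a limit of area-maximizers, $S$ is a weak solution of $\mathcal{L}(u)=0$, hence by elliptic regularity a smooth maximal surface outside the particles. Near each particle $d$, I would reuse the local analysis already carried out for Proposition \ref{simplepoles}: writing $S$ as the graph of $u$ over the totally geodesic plane $P_0$ orthogonal to $d$, the orthogonality of the barriers forces $u=o(\rho)$ and $\|\nabla u\|=o(1)$, which is exactly the condition guaranteeing that $S$ is orthogonal to $d$ and that its induced metric $g_S$ carries a conical singularity of the prescribed angle $\theta$.

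The main obstacle is the uniform space-like estimate up to the singular locus: one must rule out that the maximizing sequence loses control (its tangent planes tending to null) as it approaches the particles, where the ambient metric $g=-dt^2+\cos^2t\,h_\theta$ degenerates and the standard interior estimates do not apply directly. The resolution combines the barrier confinement with the rescaling technique of Proposition \ref{simplepoles}, reducing the behaviour near a particle to a limiting flat model on which the elliptic estimates are uniform, so that the limit surface is genuinely space-like, maximal, and orthogonal to each particle.
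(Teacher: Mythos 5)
Your proposal has genuine gaps. First, the direct variational approach --- taking an area-maximizing sequence of space-like surfaces between the two boundary components of the convex core --- is not justified in the Lorentzian setting: the area functional on space-like surfaces has no useful compactness or upper semicontinuity, and a maximizing sequence can degenerate, with tangent planes approaching the light cone, precisely at the particles. You attempt to rule this out by invoking ``the gradient bound of Proposition \ref{spacelike}'', but that proposition is itself a step of the existence proof, and its argument (the link analysis, the maximum principle on annuli, the area comparison) uses essentially that the surface is a limit of genuinely \emph{maximal} surfaces, not of members of an arbitrary maximizing sequence; the citation is circular. The paper avoids all of this by regularizing the singular metric to a family of smooth AdS GHM metrics $g_n$, applying Gerhardt's existence theorem --- which requires a smooth ambient globally hyperbolic manifold and convex barriers, supplied not by the pleated boundary of the convex core itself but by the $\mathcal{C}^{1,1}$ equidistant surfaces $\partial_{\pm,\epsilon}$ --- to obtain maximal surfaces $S_n\hookrightarrow (M,g_n)$, and only then passing to the limit.

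Second, the behaviour at the particles is asserted rather than proven. The orthogonality of the barriers does not force the limit surface to satisfy $u=o(\rho)$ and $\Vert\nabla u\Vert=o(1)$: a nowhere time-like surface trapped between two surfaces orthogonal to a particle can still contain light-like directions at the particle, or meet it non-orthogonally. Establishing space-likeness at the particles occupies all of the paper's Step~3 (the analysis of the link $\mathcal{C}_{S,p}$ in $\mathcal{L}_{p,\alpha}$, the Lipschitz regularity of the curves $\mathcal{C}_\pm$, the maximum principle on annuli, and the area comparison that crucially uses $\alpha<\pi$), and orthogonality occupies Step~4 (the blow-up to the singular Minkowski space $\mathbb{R}^{2,1}_\theta$, the anti-holomorphic Gauss map $\mathcal{N}\colon V^*\to\mathbb{H}^2_\theta$, and the constraint that the third fundamental form must carry a cone angle $\theta$, forcing the Gauss map to vanish at the singularity). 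Likewise, Proposition \ref{simplepoles}, which you cite for the local analysis near a particle, already assumes the surface is maximal and orthogonal to the singular line (its proof uses $u_r(x_0)=o(2r)$ precisely because $S$ is orthogonal to $d$), so it cannot be used to establish those very properties for your candidate surface.
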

The proof is done in four steps:
\begin{enumerate}
\item[\textbf{Step 1}] Approximate the singular metric $g$ by a sequence of smooth metrics $(g_n)_{n\in \mathbb{N}}$ which converges to the metric $g$, and prove the existence for each $n\in \mathbb{N}$ of a maximal surface $S_n\hookrightarrow (M,g_n)$.
\item[\textbf{Step 2}] Prove that the sequence $(S_n)_{n\in \mathbb{N}}$ converges outside the singular lines to a smooth nowhere time-like surface $S$ with vanishing mean curvature. 
\item[\textbf{Step 3}] Prove that the limit surface $S$ is space-like.
\item[\textbf{Step 4}] Prove that the limit surface $S$ is orthogonal to the singular lines.
\end{enumerate}

\subsection{First step}

\

\noindent\textbf{Approximation of singular metrics.}
Take $\theta \in (0,2\pi )$ and let $\mathcal{C}_{\theta} \subset \mathbb{R}^3$ be the cone given by the parametrization:

$$\mathcal{C}_{\theta} := \left\{ \left(u.\cos v,u.\sin v,\text{cotan}(\theta/2).u\right),~ (u,v)\in \mathbb{R}_+\times  [0,2\pi ) \right\}.$$

Now, consider the intersection of this cone with the Klein model of the hyperbolic 3-space, and denote by $h_{\theta}$ the induced metric on $\mathcal{C}_\theta$. Outside the apex, $\mathcal{C}_\theta$ is a convex ruled surface in $\mathbb{H}^3$, and so has constant curvature $-1$. Moreover, one easily checks that $h_\theta$ carries a conical singularity of angle $\theta$ at the apex of $\mathcal{C}_\theta$. Consider the orthogonal projection $p$ from $\mathcal{C}_\theta$ to the disk of equation $\mathbb{D}:=\{z=0\}\subset \mathbb{H}^3$. We have that $(\mathbb{D},(p^{-1})^*h_\theta)$ is isometric to the local model of hyperbolic metric with cone singularity $\mathbb{H}^2_\theta$ as defined in the introduction.

\begin{rem} The angle of the singularity is given by $\displaystyle{\underset{\rho\to 0}{\lim}\frac{l(C_\rho)}{\rho}}$ where $l(C_\rho)$ is the length of the circle of radius $\rho$ centered at the singularity.
\end{rem}

Now, to approximate this metric, take $(\epsilon_n)_{n\in \mathbb{N}}\subset (0,1)$, a sequence decreasing to zero and define a sequence of  even functions $f_n:\mathbb{R}\longrightarrow \mathbb{R}$ so that for each $n\in\mathbb{N}$,
$$\left \{
\begin{array}{l}
f_n (0)  =  -\epsilon_n^2 .\text{cotan}(\theta/2) \\
f_n^{''}(x) < 0 \ \  \forall x \in (-\epsilon_n, \epsilon_n) \\
f_n(x) =  -\text{cotan}(\theta/2).x \text{ if }  x\geqslant\epsilon_n.

\end{array}
\right .$$

\begin{figure}[!h] 
\begin{center}
\includegraphics[height=6cm]{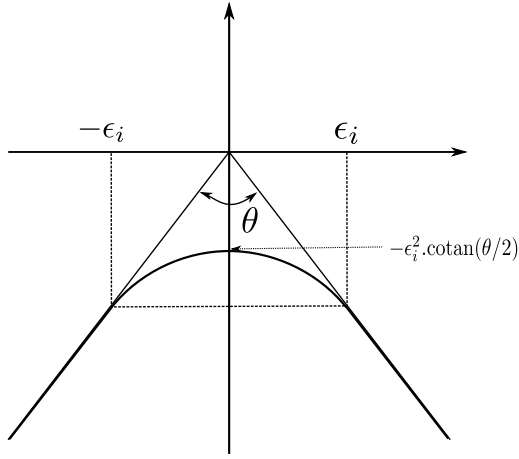}
\end{center}
\caption{Graph of $f_n$} 
\label{fi}
\end{figure}

Consider  the surface $\mathcal{C}_{\theta , n}$ obtained by making a rotation of the graph of $f_n$ around the axis $(0z)$ and consider its intersection with the Klein model of hyperbolic 3-space. Denote by  $h_{\theta,n}$ the induced metric on $\mathcal{C}_{\theta,n}$, and define $\mathbb{H}^2_{\theta,n}:=(\mathbb{D},(p^{-1})^*h_{\theta,n})$ (where $p$ is still the orthogonal projection to the disk $\mathbb{D}=\{z=0\}\subset \mathbb{H}^3$). By an abuse of notations, we write $\mathbb{H}^2_{\theta,n}=(\mathbb{D},h_{\theta,n})$. Denote by $B_i\subset \mathbb{D}$ the smallest set where the metric $h_{\theta,n}$ does not have of constant curvature $-1$, by construction, $B_n \underset{n\to\infty}{\longrightarrow} \{0\}$, where $\{0\}$ is the center of $\mathbb{D}$. We have

\begin{prop} 
For all compact $K\subset \mathbb{D}\setminus\{0\}$, there exists $i_K\in \mathbb{N}$ such that for all $n>n_K$, $h_{\theta_{\vert K}}=h_{\theta ,n_{\vert K}}$.
\end{prop}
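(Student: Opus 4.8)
The plan is to exploit the fact that the smoothing applied to the generating profile is supported on the tiny interval $(-\epsilon_n,\epsilon_n)$, so that the approximating surface coincides \emph{exactly}---and not merely up to curvature---with the cone $\mathcal{C}_\theta$ away from a shrinking neighborhood of the axis of revolution. Indeed, the third defining condition on $f_n$ forces $f_n(x)=-\text{cotan}(\theta/2)\,x$ for all $|x|\geq\epsilon_n$, which is (up to the reflection $z\mapsto -z$, an ambient isometry of the Klein model fixing $\mathbb{D}$ pointwise) the affine profile whose revolution produces $\mathcal{C}_\theta$. First I would therefore observe that, revolving around the $z$-axis, $\mathcal{C}_{\theta,n}$ and $\mathcal{C}_\theta$ agree as subsets of $\mathbb{H}^3$ over the region where the planar radius $u$ satisfies $u\geq\epsilon_n$; only over the disk $\{u<\epsilon_n\}$ about the axis can they differ.

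Next I would transport this coincidence down to $\mathbb{D}$ via the orthogonal projection $p$. Since $p$ is determined by the ambient hyperbolic geometry and the fixed totally geodesic target plane $\mathbb{D}=\{z=0\}$, it does not depend on which surface is being projected: wherever $\mathcal{C}_{\theta,n}$ and $\mathcal{C}_\theta$ coincide as point sets, $p$ restricts to one and the same diffeomorphism onto its image, and the metrics induced from $\mathbb{H}^3$ are identical there. By the rotational symmetry of the entire configuration about the $z$-axis, $p$ carries the common region $\{u\geq\epsilon_n\}$ onto the complement of a round disk $D_n$ centered at the origin of $\mathbb{D}$, and by construction $D_n$ shrinks to $\{0\}$ (this is exactly the assertion $B_n\to\{0\}$ recorded above). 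Pulling back by $p^{-1}$, I would conclude that $h_\theta$ and $h_{\theta,n}$ literally coincide on $\mathbb{D}\setminus D_n$, since each is the $p^{-1}$-pullback of the same induced metric on the same surface.

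The proof then finishes with an elementary compactness step: a compact $K\subset\mathbb{D}\setminus\{0\}$ has positive distance to the center, hence lies in the complement of some fixed disk $D$ about $0$; as $D_n\to\{0\}$ there is an index $n_K$ with $D_n\subset D$ for every $n>n_K$, so that $K\subset\mathbb{D}\setminus D_n$ and $h_\theta|_K=h_{\theta,n}|_K$. I do not anticipate a real obstacle here. The only subtle point, and the one worth emphasising, is that we need equality of the metrics and not just the weaker fact that both have curvature $-1$ outside $B_n$; this is handed to us for free, because $f_n$ matches the cone profile \emph{on the nose} for $|x|\geq\epsilon_n$, so the surfaces, and therefore their induced and projected metrics, are genuinely equal there rather than merely isometric or equicurved.
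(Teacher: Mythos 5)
Your argument is correct and is exactly the reasoning the paper leaves implicit: the proposition is stated without proof as a consequence of the construction, namely that $f_n$ agrees with the affine cone profile for $|x|\geq\epsilon_n$, so $\mathcal{C}_{\theta,n}$ and $\mathcal{C}_\theta$ coincide as subsets of $\mathbb{H}^3$ outside a shrinking neighborhood of the axis, and the common orthogonal projection $p$ then forces $h_{\theta,n}=h_\theta$ outside a disk $D_n\to\{0\}$. Your handling of the sign discrepancy via the reflection $z\mapsto -z$ (an isometry fixing $\mathbb{D}$ pointwise and commuting with $p$) and the final compactness step are both sound.
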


We define the AdS 3-space with regularized singularity:

\begin{Def}
For $\theta>0$, $n\in \mathbb{N}$, set $\AdS^3_{\theta,n}:=\{ (t,\rho,\varphi)\in (-\pi/2,\pi/2)\times\mathbb{D}\}$ endowed with the metric:
$$g_{\theta,n}=-dt^2+\cos^2th_{\theta,n}.$$
\end{Def}

By construction, there exists a smallest tubular neighborhood $V^n_\theta$ of $l=\{0\}\times (-\pi/2,\pi/2)$ such that $\AdS^3_{\theta,n}\setminus V^n_\theta$ is a Lorentzian manifold of constant curvature $-1$.

In this way, we are going to define the regularized AdS convex GHM manifold with particles.

For all $j\in\{1,...,n\}$ and $x\in d_j$ where $d_j$ is a singular line in $(M,g)$, there exists a neighborhood of $x$ in $(M,g)$ isometric to a neighborhood of a point on the singular line in $\AdS^3_{\theta_j}$. For $n\in \mathbb{N}$, we define the regularized metric $g_n$ on $M$ so that the neighborhoods of points of $d_j$ are isometric to neighborhoods of points on the central axis in $\AdS^3_{\theta_j,n}$. Clearly, the metric $g_n$ is obtained taking locally the metric of $V^n_{\theta_j}$ in a tubular neighborhood $U^n_j$ of the singular lines $d_j$ for all $j\in\{1,...,n\}$. In particular, outside these $U^n_j$, $(M,g_n)$ is a regular AdS manifold.

\begin{prop}
Let $K\subset M$ be a compact set which does not intersect the singular lines. There exists $n_K\in \mathbb{N}$ such that, for all $n>n_K$, $g_{n_{\vert K}} = g_{\vert K}$.
\end{prop}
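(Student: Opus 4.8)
The plan is to exploit that the regularization is strictly local. By construction, $g_n$ differs from $g$ only inside the tubular neighborhoods $U_j^n$ of the singular lines $d_j$; on the complement $M\setminus\bigcup_j U_j^n$ one in fact has $g_n=g$ \emph{exactly}, and not merely up to convergence. Indeed, the profile functions $f_n$ satisfy $f_n(x)=-\text{cotan}(\theta/2)x$ for $|x|\geq\epsilon_n$, so the two-dimensional models agree exactly, $h_{\theta,n}=h_\theta$ on $\mathbb{D}\setminus B_n$; through the warped-product form $g_{\theta,n}=-dt^2+\cos^2 t\,h_{\theta,n}$ this forces $g_{\theta,n}=g_\theta$ outside $V_\theta^n$, hence $g_n=g$ outside $\bigcup_j U_j^n$. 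It therefore suffices to show that for $n$ large the compact set $K$ meets none of the $U_j^n$.

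First I would pin down the size of $U_j^n$ in terms of $B_n$. The warped product $-dt^2+\cos^2 t\,h$ has constant curvature $-1$ exactly where the base metric $h$ has constant curvature $-1$ (this is the computation behind the warped-product expression for $g_\theta$ recorded earlier). Consequently the locus of $\AdS^3_{\theta,n}$ where the metric fails to have constant curvature $-1$ is precisely $(-\pi/2,\pi/2)\times B_n$, which is the tubular neighborhood $V_\theta^n$. By the preceding proposition, $h_{\theta,n}=h_\theta$ on every compact subset of $\mathbb{D}\setminus\{0\}$ once $n$ is large; equivalently $B_n\to\{0\}$. Hence $V_\theta^n$ shrinks to the central axis $l$, and, transporting through the fixed local isometry identifying a neighborhood of $d_j$ with a neighborhood of $l$ in $\AdS^3_{\theta_j,n}$, the neighborhood $U_j^n$ shrinks to $d_j$: for any fixed open neighborhood $W$ of $d_j$ one has $U_j^n\subset W$ for all $n$ sufficiently large.

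The conclusion is then a routine separation argument. Fix an auxiliary Riemannian metric on $M$ and set $\delta:=\text{dist}\big(K,\bigcup_j d_j\big)$, which is strictly positive since $K$ is compact and disjoint from the closed set $\bigcup_j d_j$. Because there are only finitely many singular lines and each $U_j^n$ shrinks to its $d_j$, there is an $n_K$ such that for all $n>n_K$ every $U_j^n$ is contained in the $\delta$-neighborhood of $d_j$, and therefore $K\cap U_j^n=\emptyset$ for each $j$. Thus for $n>n_K$ we have $K\subset M\setminus\bigcup_j U_j^n$, on which $g_n=g$ by construction, giving $g_n|_K=g|_K$.

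I do not expect a genuine obstacle here: the whole point of the construction is to make the agreement $g_n=g$ hold exactly away from the singular lines, and this rests entirely on $f_n$ coinciding identically with the cone profile outside $(-\epsilon_n,\epsilon_n)$, together with the warped-product structure that carries the two-dimensional statement up to three dimensions. The only thing to watch is to use this exactness rather than mere $\mathcal{C}^\infty$-convergence; once that is in hand, the reduction to the preceding (two-dimensional) proposition and the final compactness argument are standard.
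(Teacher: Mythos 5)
Your proof is correct and follows exactly the line the paper intends: the paper states this proposition without proof, as an immediate consequence of the construction (the metric $g_n$ agrees with $g$ exactly outside the tubular neighborhoods $U_j^n$, which shrink onto the singular lines, so a compact set avoiding the singular locus eventually avoids all the $U_j^n$). Your writeup simply makes this explicit, correctly emphasizing that the agreement is exact (coming from $f_n$ coinciding with the cone profile outside $(-\epsilon_n,\epsilon_n)$) rather than merely asymptotic.
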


\medskip
\noindent\textbf{Existence of a maximal surface in each $(M,g_n)$}

We are going to prove Proposition \ref{existence} by convergence of maximal surfaces in each $(M,g_n)$. A result of Gerhardt \cite[Theorem 6.2]{gerhardt1983h} provides the existence of a maximal surface in $(M,g_n)$ given the existence of two smooth barriers, that is, a strictly future-convex smooth (at least $\mathcal{C}^2$) space-like surface and a strictly past-convex one. This result has been improved in \cite[Theorem 4.3]{MR2904912} reducing the regularity conditions to $\mathcal{C}^0$ barriers.

The natural candidates for these barriers are equidistant surfaces from the boundary of the convex core of $(M,g)$. It is proved in \cite[Section 5]{bonsante2009ads} that the future (respectively past) boundary component $\partial_+$ (respectively $\partial_-$) of the convex core is a future-convex (respectively past-convex) space-like pleated surface orthogonal to the particles. Moreover, each point of the boundary components is either contained in the interior of a geodesic segment (a pleating locus) or of a totally geodesic disk contained in the boundary components.

For $\epsilon>0$ fixed, consider the $2\epsilon$-surface in the future of $\partial_+$ and denote by $\partial_{+,\epsilon}$ the $\epsilon$-surface in the past of the previous one. As pointed out in \cite[Proof of Lemma 4.2]{bonsante2009ads}, this surface differs from the $\epsilon$-surface in the future of $\partial_+$ (at the pleating locus).

\begin{prop}\label{c11}
For $n$ big enough, $\partial_{+,\epsilon}\hookrightarrow (M,g_n)$ is a strictly future-convex space-like  $\mathcal{C}^{1,1}$ surface.
\end{prop}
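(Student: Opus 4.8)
The plan is to analyze $\partial_{+,\epsilon}$ separately on the region away from the particles, where $g_n$ coincides with $g$, and on small tubular neighborhoods of the particles, where the regularization is active, and then to glue the two pictures. Away from the singular lines $g$ is an honest smooth AdS metric, and for every compact $K\subset M$ disjoint from the particles there is $n_K$ with $g_{n|K}=g_{|K}$ for $n>n_K$ (the Proposition just above). On such a region $\partial_{+,\epsilon}$ is the genuine double equidistant surface of the convex pleated surface $\partial_+$ in a Lorentzian manifold of constant curvature $-1$. The point of first flowing a distance $2\epsilon$ to the future and then $\epsilon$ back to the past is the standard device of \cite[Proof of Lemma 4.2]{bonsante2009ads}: over the totally geodesic faces the two flows compose to the $\epsilon$-equidistant of a totally geodesic plane, which is umbilic with nonzero principal curvature, while along the pleating locus the construction rounds the bending edges into a $\mathcal{C}^{1,1}$, strictly convex, space-like surface. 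Thus away from the particles $\partial_{+,\epsilon}$ already has the three required properties, and since $g_{n|K}=g_{|K}$ it retains them in $(M,g_n)$ once $n$ is large.

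The real content is the behaviour near a particle $d_j$, which I would control uniformly in $n$. Near $d_j$ the manifold $(M,g)$ is isometric to a neighbourhood of the singular line in $\AdS^3_{\theta_j}$ and $(M,g_n)$ to the corresponding neighbourhood in $\AdS^3_{\theta_j,n}$; crucially both carry the warped form $-dt^2+\cos^2 t\, h$, with the \emph{same} warping function $\cos t$ and only the spatial factor changing from $h_{\theta_j}$ to the smoothed $h_{\theta_j,n}$. Since $\partial_+$ is orthogonal to the singular locus (and, for $\theta_j<\pi$, coincides near $d_j$ with the totally geodesic space-like plane $P_0=\{t=0\}$ orthogonal to $d_j$), I would first identify $\partial_{+,\epsilon}$ near the particle with the slice $\{t=\epsilon\}$. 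This uses that the $t$-lines $\{x=\mathrm{const}\}$ are unit-speed timelike geodesics orthogonal to every slice $\{t=\mathrm{const}\}$ — the mixed Christoffel symbols of $-dt^2+\cos^2 t\, h$ vanish — so the normal geodesic equidistant flow of $P_0$ is simply $t\mapsto t+\mathrm{const}$; flowing $2\epsilon$ to the future and then $\epsilon$ to the past yields exactly $\{t=\epsilon\}$, and this description is independent of $n$.

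It then remains to verify the three properties for $\{t=\epsilon\}$ in $g_{\theta_j,n}$, and the decisive observation is that they are all $n$-independent. The induced metric on $\{t=\epsilon\}$ is $\cos^2\!\epsilon\, h_{\theta_j,n}$, which is positive definite, so the slice is space-like. Computing the shape operator of a slice $\{t=c\}$ of $-dt^2+\cos^2 t\, h$ gives $\tan(c)\,\mathrm{Id}$, with no dependence on the spatial metric $h$; hence $\{t=\epsilon\}$ is umbilic with principal curvature $\tan\epsilon>0$ for $\epsilon\in(0,\pi/2)$, i.e. strictly future-convex, for every $n$. Finally $g_{\theta_j,n}$ is smooth and $\{t=\epsilon\}$ is a smooth graph, so the surface is $\mathcal{C}^\infty$ there, a fortiori $\mathcal{C}^{1,1}$. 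Having controlled $\partial_{+,\epsilon}$ for all $n$ on fixed tubes around the finitely many particles and for large $n$ on the complementary compact region, I would fix $n$ large enough to cover both and match the descriptions on the overlap (where $g_n=g$ and the honest double equidistant equals the slice), giving the Proposition.

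The step I expect to be the main obstacle is this near-particle analysis. The clean reduction above rests on $\partial_+$ being totally geodesic in a full neighbourhood of $d_j$ — which is where the hypothesis $\theta_j<\pi$ enters — and on the regularization preserving the warped form $-dt^2+\cos^2 t\, h$, so that the strict convexity of $\{t=\epsilon\}$ is governed by $\cos t$ alone and is therefore uniform in $n$. If instead $\partial_+$ is only orthogonal to (rather than flat near) the particle, one must treat $\partial_{+,\epsilon}$ as a graph $t=\tilde u(x)$ with $\tilde u\to\epsilon$ and $\nabla\tilde u\to 0$ at $d_j$ and propagate the space-like and strict-convexity estimates from this decay, which is the technically delicate point.
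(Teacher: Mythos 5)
Your proposal is correct and follows essentially the same route as the paper: decompose into the region away from the particles (where $g_n=g$ for large $n$ and one invokes the non-singular results of Bonsante--Schlenker and Barbot--B\'eguin--Zeghib) and tubular neighborhoods of the particles, where $\partial_+$ is totally geodesic (the paper cites \cite[Lemma 5.2]{bonsante2009ads} for exactly the fact you flag as the crucial input), so that $\partial_{+,\epsilon}$ is the slice $\{t=\epsilon\}$ of the warped product $-dt^2+\cos^2 t\, h_{\theta_j,n}$. Your computation of the shape operator $\tan(\epsilon)\,\mathrm{Id}$ of that slice, independent of $n$, is the same calculation the paper performs via the first variation of the induced metric $\cos^2(t)h_{\theta_j,n}$ along the normal flow.
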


\begin{proof}

Outside the open set $\displaystyle{U^n:=\bigcup_{j=1}^n U^n_j}$ (where the $U_j^n$ are tubular neighborhoods of $d_j$ so that the curvature is different from $-1$), $(M,g_n)$ is isometric to $(M,g)$, and  moreover, $U^n_j \underset{n\to\infty}{\longrightarrow} d_j$ for each $j$. As proved in \cite[Lemma 5.2]{bonsante2009ads}, each intersection of $\partial_+$ with a particle lies in the interior of a totally geodesic disk contained in $\partial_+$. So, there exists $n_0\in\mathbb{N}$ such that, for $n>n_0$, $U_i^j\cap \partial_+$ is totally geodesic.

The fact that $\partial_{+,\epsilon}$ is $\mathcal{C}^{1,1}$ is proved in \cite[Proof of Lemma 4.2]{bonsante2009ads}.

For the strict convexity outside $U^n$, the result is proved in \cite[Proposition 6.28]{barbot2007constant}. So it remains to prove that $\partial_{+,\epsilon}\cap U_n$ is strictly future-convex.

Let $d=d_j$ be a singular line which intersects $\partial_+$ at a point $x$. As $U:=U_n^j\cap \partial_+$ is totally geodesic, we claim that $U_\epsilon:=U_n^j \cap \partial_{+,\epsilon}$ is the $\epsilon$-surface of $U$ with respect to the metric $g_n$. In fact, the space-like surface $\mathcal{P}_0\subset \AdS^3_{\theta,i}$ given by the equation $\{t=0\}$ is totally geodesic and the one given by $\mathcal{P}_\epsilon:=\{t=\epsilon\}$ is the $\epsilon$-surface of $\mathcal{P}_0$ and corresponds to the $\epsilon$-surface in the past of $\mathcal{P}_{2\epsilon}$. It follows that $U_\epsilon$ is obtained by taking the $\epsilon$-time flow of $U$ along the unit future-pointing vector field $N$ normal to $\partial_+$ (extended to an open neighborhood of $U$ by the condition $\nabla_N^n N=0$, where $\nabla^n$ is the Levi-Civita connection of $g_n$). We are going to prove that the second fundamental form on $U_\epsilon$ is positive definite.

Note that in $\AdS^3_{\theta_j,n}$, the surfaces $\mathcal{P}_{t_0}:=\{t=t_0\}$ are equidistant from the totally geodesic space-like surface $\mathcal{P}_0$. Moreover, the induced metric on $\mathcal{P}_{t_0}$ is $\I_{t_0}=\cos^2(t_0)h_{n,\theta}$ and so, the variation of $\I_{t_0}$ along the flow of $N$ is given by 
$$\frac{d}{dt}_{\vert t=t_0} \I_t(u_t,u_t)=-2\cos(t_0)\sin(t_0),$$
for $u_t$ a unit vector field tangent to $\mathcal{P}_t$. On the other hand, this variation is given by
$$\frac{d}{dt}_{\vert t=t_0} \I_t(u_t,u_t)=\mathcal{L}_N \I_{t_0}(u_{t_0},u_{t_0}) = 2\I_{t_0}(\nabla^i_{u_{t_0}}N,u_{t_0})=-2\II_{t_0}(u_{t_0},u_{t_0}),$$
where $\mathcal{L}$ is the Lie derivative and $Bu:=-\nabla_u N$ is the shape operator.

It follows that $\II_{t_0}$ is positive-definite for $t_0>0$ small enough. So $\partial_{+,\epsilon}\hookrightarrow (M,g_n)$ is strictly future-convex.
\end{proof}

So we get a $\mathcal{C}^{1,1}$ barrier. The existence of a $\mathcal{C}^{1,1}$ strictly past-convex surface is analogous. So, by \cite[Theorem 4.3]{MR2904912}, we get that for all $n>n_0$, there exists a maximal space-like Cauchy surface $S_n$ in $(M,g_n)$. By re-indexing, we finally have proved

\begin{prop}\label{sequencemaxsurf}
There exists a sequence $(S_n)_{n\in \mathbb{N}}$ of space-like surfaces where each $S_n\hookrightarrow (M,g_n)$ is a maximal space-like surface.
\end{prop}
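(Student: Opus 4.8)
The plan is to produce $S_n$ by the barrier method, invoking the existence result \cite[Theorem 4.3]{MR2904912}, which furnishes a maximal space-like Cauchy surface trapped between a future-convex and a past-convex $\mathcal{C}^0$ barrier in a globally hyperbolic spacetime. Most of the work is already contained in Proposition \ref{c11}: for $n$ large enough the surface $\partial_{+,\epsilon}$ is a strictly future-convex space-like $\mathcal{C}^{1,1}$ surface in $(M,g_n)$. First I would carry out the mirror-image construction, applying the argument of Proposition \ref{c11} verbatim to the past boundary component $\partial_-$ of the convex core and its inward $\epsilon$-surface, to obtain for $n$ large a strictly past-convex space-like $\mathcal{C}^{1,1}$ surface $\partial_{-,\epsilon}\hookrightarrow(M,g_n)$.

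Next I would check that the ordered pair $(\partial_{-,\epsilon},\partial_{+,\epsilon})$ satisfies all the hypotheses of the existence theorem. Both surfaces are space-like Cauchy surfaces, so the region they bound is globally hyperbolic, and the future-convexity of the upper barrier together with the past-convexity of the lower one gives precisely the sign of mean curvature needed to trap a zero-mean-curvature surface. The essential point to verify is the correct ordering, namely that $\partial_{+,\epsilon}$ lies entirely in the future of $\partial_{-,\epsilon}$: this follows because $\partial_+$ lies in the future of $\partial_-$ (the convex core has non-empty interior in the non-Fuchsian case treated here) and the $\epsilon$-deformations move each component only a bounded causal distance toward the interior, so for $\epsilon$ small the two modified surfaces remain disjoint with $\partial_{+,\epsilon}$ above $\partial_{-,\epsilon}$.

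Applying \cite[Theorem 4.3]{MR2904912} to this pair of barriers then yields, for every $n>n_0$, a maximal space-like Cauchy surface $S_n\hookrightarrow(M,g_n)$, and re-indexing the sequence so that it starts at $n=0$ gives the desired family $(S_n)_{n\in\mathbb{N}}$.

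The main obstacle is not this assembly step, which is essentially formal once the barriers are in hand, but confirming that the regularized spacetimes $(M,g_n)$ remain the correct ambient setting: one must know that $g_n$ is genuinely globally hyperbolic and that the barriers built for $g$ continue to be Cauchy surfaces for $g_n$. This is guaranteed because $g_n$ coincides with $g$ outside the shrinking tubes $U^n$, the intersections of the barriers with the particles lie in totally geodesic disks by \cite[Lemma 5.2]{bonsante2009ads}, and inside the tubes the equidistant construction reduces to the explicit normal flow computed in Proposition \ref{c11}; hence the causal structure relevant to the barrier argument is unaffected by the regularization.
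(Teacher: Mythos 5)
Your proposal follows essentially the same route as the paper: the paper's proof of this proposition consists precisely of invoking Proposition \ref{c11} for the future barrier, noting that the past-convex barrier is constructed analogously, applying \cite[Theorem 4.3]{MR2904912} to obtain $S_n$ for $n>n_0$, and re-indexing. Your additional checks on the ordering of the barriers and the global hyperbolicity of $(M,g_n)$ are sensible elaborations of points the paper leaves implicit, not a different argument.
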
 

\subsection{Second step}

\begin{prop}\label{limitsurf}
There exists a subsequence of $(S_n)_{n\in\mathbb{N}}$ converging uniformly on each compact which does not intersect the singular lines to a surface $S\hookrightarrow (M,g)$.
\end{prop}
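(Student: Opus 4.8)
The plan is to realize each maximal surface $S_n$ as a graph over a fixed Cauchy surface, to obtain a priori estimates that are uniform in $n$ on compact sets away from the singular locus, and then to extract a limit by Arzel\`{a}--Ascoli together with a diagonal argument over an exhaustion of $M$ minus the singular lines.

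First I would set up the trapping. By Proposition \ref{c11} and its past-convex analogue, for $n$ large the surface $\partial_{+,\epsilon}$ and the corresponding past-convex surface are strictly convex space-like barriers for $(M,g_n)$, so the maximal surface $S_n$ supplied by Gerhardt's theorem lies in the region $W$ between them. Using the product structure $M\cong\Sigma_\fkp\times\mathbb{R}$, in which the singular lines are the vertical fibers over the marked points, each $S_n$ --- a space-like Cauchy surface --- is the graph of a function $u_n$ over $\Sigma_\fkp$ minus the marked points. Since $S_n\subset W$ and $W$ is confined between the two fixed barriers, the functions $u_n$ are uniformly bounded in $C^0$.

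Next I would fix a compact $K\subset M$ disjoint from the singular lines and derive uniform interior estimates there. By the proposition comparing $g_n$ with $g$, there is $n_K$ such that $g_n\equiv g$ on $K$ for all $n>n_K$, so on $K$ each such $S_n$ solves the one fixed maximal surface equation $\mathcal{L}(u_n)=0$. In the divergence form $\text{div}_{g_{S_n}}(v\,\pi^*\nabla u_n)+a=0$ used earlier, this equation is quasi-linear and uniformly elliptic precisely while $v=(1-\Vert\pi^*\nabla u_n\Vert^2)^{-1/2}$ stays bounded, that is, while $S_n$ is uniformly space-like. The decisive estimate is that $v$ is bounded on $K$ uniformly in $n$: this is an interior gradient estimate for maximal surfaces, applicable to each $S_n$ with constants depending only on the $C^\infty$-geometry of $g$ near $K$ (fixed, since $g_n\equiv g$ there) and on the fixed barriers trapping the $S_n$ --- the same computation that underlies Proposition \ref{spacelike}. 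Granting this, the coefficients of $\mathcal{L}$ meet the hypotheses of Theorem \ref{schauder} uniformly in $n$, so the Schauder interior estimates followed by a standard bootstrap give uniform $C^k$ bounds, for every $k$, on any compact interior to $K$.

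Finally I would take an exhaustion $K_1\subset K_2\subset\cdots$ of $M$ minus the singular lines by such compacts. On each $K_j$, Arzel\`{a}--Ascoli yields a subsequence converging in $C^\infty(K_j)$, and a diagonal extraction produces one subsequence $u_{n_m}\to u$ in $C^\infty_{loc}$ away from the singular lines; its graph is a smooth surface $S\hookrightarrow(M,g)$. Passing to the limit in $\mathcal{L}(u_{n_m})=0$ --- legitimate because the convergence is $C^\infty_{loc}$ and $g_n\equiv g$ there --- shows that $S$ has vanishing mean curvature, and, being a $C^1_{loc}$ limit of space-like surfaces, $S$ is nowhere time-like. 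The hard part will be the uniform interior gradient estimate of the previous paragraph: without a bound on $v$ independent of $n$ the equation $\mathcal{L}$ degenerates at would-be light-like points and no elliptic compactness survives. The real subtlety is that this bound must be uniform in $n$ while the metrics $g_n$ are still being altered near the singular lines; this is exactly why the convergence is asserted only away from the singular locus, where $g_n\equiv g$ for large $n$ and the gradient estimate holds with constants that are independent of $n$ (even though they deteriorate as $K$ approaches the singular lines).
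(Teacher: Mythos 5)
The outer framework of your argument --- trap the $S_n$ between the convex barriers, write them as graphs of uniformly bounded functions $u_n$ over a reference surface, observe that space-likeness makes the $u_n$ uniformly Lipschitz, and extract a limit by Arzel\`a--Ascoli plus a diagonal argument over an exhaustion --- is exactly the paper's proof, and it already establishes the proposition as stated, since only \emph{uniform} ($\mathcal{C}^0$) convergence is claimed. The problem is the step you yourself single out as decisive: the assertion that $v=(1-\Vert\pi^*\nabla u_n\Vert^2)^{-1/2}$ is bounded on $K$ uniformly in $n$, with constants depending only on the local geometry of $g$ near $K$ and on the barriers. No such purely interior gradient estimate holds for the maximal surface equation in Lorentzian signature: unlike the Riemannian minimal surface equation, a sequence of space-like maximal graphs with uniform height bounds can degenerate to a graph containing null segments through interior points. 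This is precisely the content of Gerhardt's Theorem 3.1, which the paper invokes immediately after Proposition \ref{limitsurf}: the only constraint on such null segments is that they be inextendible, so here they must run between singular lines --- passing through compact sets $K$ on which $g_n\equiv g$. If your local estimate were true, this whole discussion, and the global argument of Proposition \ref{spacelike}, would be unnecessary. Your attribution of the estimate to ``the same computation that underlies Proposition \ref{spacelike}'' is also a mis-citation: that proposition is proved by an analysis of the link of $S$ \emph{at} the singular lines together with an area comparison, not by an interior gradient bound, and it is logically downstream of the $\mathcal{C}^0$ convergence, not an input to it.

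Consequently the $\mathcal{C}^\infty_{loc}$ convergence, the smoothness of $S$, and the vanishing of its mean curvature do not follow by the route you describe. The paper's order of operations is: first get the $\mathcal{C}^0$ limit from the Lipschitz bound alone (Proposition \ref{limitsurf}); then note the limit is nowhere time-like but may a priori contain a light-like locus; then upgrade to $\mathcal{C}^{1,1}$ convergence and vanishing mean curvature only away from that light-like locus, where ellipticity is restored (Proposition \ref{bootstrap}, using Gerhardt's $\mathcal{W}^{2,p}$ estimates and a bootstrap); and finally rule out the light-like locus by the global argument at the singular lines. If you delete your middle paragraph and the claims of smoothness and maximality of the limit from your conclusion, what remains is a correct proof of the stated proposition.
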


\begin{proof}
For some fixed $n_0\in\mathbb{N}$, $(M,g_{n_0})$ is a smooth globally hyperbolic manifold and so admits some smooth time function $f:(M,g_{n_0})\longrightarrow \mathbb{R}$. This time function allows us to see the sequence of maximal surfaces $(S_n)_{n\in\mathbb{N}}$ as a sequence of graphs on functions over $f^{-1}(\{0\})$ (where we suppose $0\in f(M)$). Let $K\subset f^{-1}(\{0\})$ be a compact set which does not intersect the singular lines and see locally the surfaces $S_n$ as graphs of functions $u_n: K \longrightarrow \mathbb{R}$.

For $n$ big enough, the graphs of $u_n$ are pieces of space-like surfaces contained in the convex core of $(M,g)$, so the sequence $(u_n)_{n\in\mathbb{N}}$ is a sequence of uniformly bounded Lipschitz functions with uniformly bounded Lipschitz constant. By  Arzel\`a-Ascoli's Theorem, this sequence admits a subsequence (still denoted by $(u_n)_{n\in\mathbb{N}}$) converging uniformly to a function $u: K \longrightarrow \mathbb{R}$. Applying this to each compact set of $f^{-1}(\{0\})$ which does not intersect the singular line, we get that the sequence $(S_n)_{n\in\mathbb{N}}$ converges uniformly outside the singular lines to a surface $S$.
\end{proof}

Note that, as the surface $S$ is a limit of space-like surfaces, it  is nowhere time-like. However, $S$ may contains some light-like locus. We recall a theorem of C. Gerhardt \cite[Theorem 3.1]{gerhardt1983h}:

\begin{theo}(C. Gerhardt)
Let $S$ be a limit on compact subsets of a sequence of space-like surfaces in a globally hyperbolic space-time. Then if $S$ contains a segment of a null geodesic, this segment has to be
maximal, that is it extends to the boundary of $M$.
\end{theo}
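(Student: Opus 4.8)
The plan is to argue entirely within causality theory, treating $S$ as a closed achronal topological hypersurface and showing that a null generator it contains cannot terminate in the interior of $M$. First I would record that $S$ is achronal. Each $S_n$ is a smooth space-like Cauchy surface, hence achronal, and, using the time function $f$ from the proof of Proposition \ref{limitsurf}, all the $S_n$ and their limit $S$ are graphs over the fixed Cauchy surface $\mathcal{C}:=f^{-1}(\{0\})$. Since the defining functions $u_n$ converge uniformly with a weakly space-like ($1$-Lipschitz) limit, the graph $S$ is achronal: a future time-like curve increases $f$ faster than the graph can vary, so two points of $S$ are never chronologically related. Moreover $S$, being a graph over the closed surface $\mathcal{C}\cong\Sigma$, is a closed edgeless achronal topological hypersurface.

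Next I would set up the contradiction. Let $\gamma:[a,b]\to M$ be an affinely parametrized null geodesic of $(M,g)$ with $\gamma([a,b])\subset S$, and suppose $\gamma$ is \emph{not} maximal, so that (after possibly reversing the time orientation) its future endpoint $q:=\gamma(b)$ lies in the interior of $M$ and $\gamma$ admits no prolongation inside $S$ as a null geodesic beyond $q$. Fix a convex normal neighborhood $U$ of $q$ and a point $p:=\gamma(b-\delta)\in S\cap\gamma$ with $\delta>0$ small, so that $p$ and $q$ are joined by the null segment $\gamma|_{[b-\delta,b]}$; in particular $q\in\partial I^{+}(p)$. Achronality of $S$ forbids any point of $S$ from lying in $I^{+}(p)$, hence $S\cap U\subset U\setminus I^{+}(p,U)$ while $q\in S\cap\partial I^{+}(p,U)$. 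Thus the future null cone $\partial I^{+}(p,U)$ is a local support hypersurface for $S$ at $q$, and $S$ is tangent to it at $q$ along the null direction $\dot\gamma(b)$.

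The heart of the argument is then to upgrade this tangency to the statement that $S$ actually \emph{contains} the null generator of $\partial I^{+}(p,U)$ issuing from $q$, namely the null-geodesic prolongation $\tilde\gamma$ of $\gamma$; this contradicts the assumed non-extendibility, so a maximal null segment in $S$ has no interior endpoint and must run out to $\partial M$. I expect this rigidity step to be the main obstacle: for a merely achronal hypersurface it can genuinely fail, since generators are allowed to reach a caustic or crossing (``crease'') set and stop there in the interior. Excluding such a crease at $q$ is precisely where the hypothesis that $S$ is a limit of \emph{space-like} surfaces must be used. The cleanest route is a barrier/maximum-principle comparison: near $q$ approximate by the graphs $u_n$ solving the maximal-surface equation $\mathcal{L}u=0$, use the null cone $\partial I^{+}(p)$ as a one-sided barrier that $S=\lim u_n$ touches at $q$, and invoke the strong maximum principle for the uniformly elliptic graph equation (in the framework of Theorem \ref{schauder}) to conclude that $S$ coincides with the barrier along the entire generator through $q$. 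Equivalently one shows directly that if the generator crashed at $q$, then nearby generators of $\partial I^{+}(p)$ would push points of $S$ into $I^{+}(p)$, violating achronality. Either way the generator is absorbed into $S$, and iterating through successive convex neighborhoods shows that the null segment is inextendible in the interior, i.e.\ maximal.
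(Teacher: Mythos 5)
The paper offers no proof of this statement: it is quoted from Gerhardt \cite{gerhardt1983h} and used as a black box, so there is nothing internal to compare with; I can only assess your argument on its own terms. The preliminary part is fine ($S$ is a closed achronal edgeless graph over a Cauchy surface, and the future null cone $\partial I^{+}(p)$ is a one-sided support for $S$ at $q$). But the ``rigidity step'' that you yourself flag as the main obstacle is the entire content of the theorem, and neither of the two routes you sketch closes it.

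The purely causal route is not salvageable: achronality together with being a uniform limit of space-like graphs is genuinely insufficient. In $\mathbb{R}^{2,1}$ the graphs of $u_n(x,y)=\sqrt{x^2+1/n^2}$ are space-like and converge uniformly to the graph of $|x|$, which is achronal and contains the null ray $\{(s,s,0):s\geq 0\}$ terminating at the interior point $(0,0,0)$; no point of the surface ever enters $I^{+}$ of another, so ``nearby generators would push points of $S$ into $I^{+}(p)$'' is false, and a crease is perfectly compatible with all your hypotheses. What excludes it in Gerhardt's actual theorem is a uniform bound on the mean curvatures of the approximating surfaces (here $H_n\equiv 0$), a hypothesis you never isolate or use. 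The PDE route as you state it also fails: the graph operator is uniformly elliptic only where $\Vert\nabla u\Vert$ stays away from $1$, whereas at $q$ the limit attains the null bound and $v=(1-\Vert\nabla u\Vert^2)^{-1/2}$ blows up, so ``the strong maximum principle for the uniformly elliptic graph equation'' is precisely what is unavailable there. The correct mechanism (Gerhardt's, and Bartnik's analysis of the singular set of maximal surfaces) is a geometric barrier comparison: if the generator stopped at an interior point, one could insert a smooth strictly convex space-like cap touching the approximating $S_n$ from one side with mean curvature exceeding the uniform bound, contradicting the comparison principle for prescribed-mean-curvature surfaces. Without the curvature hypothesis and an argument adapted to the degeneration of ellipticity at the null locus, the proof has a genuine gap.
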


So, if $S$ contains a light-like segment, either this segment extends to the boundary of $M$, or it intersects two singular lines. The first is impossible as it would imply that $S$ is not contained in the convex core. Thus, the light-like locus of $S$  lies in the set of light-like rays between two singular lines.

We now prove the following:
\begin{prop}\label{bootstrap}
The sequence of space-like surfaces $(S_n)_{n\in\mathbb{N}}$ of Proposition \ref{sequencemaxsurf} converges $\mathcal{C}^{1,1}$ on each compact which does not intersect the singular lines and light-like locus. Moreover, outside these loci, the surface $S$ has everywhere vanishing mean curvature.
\end{prop}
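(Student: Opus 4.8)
The plan is to upgrade the $C^0$-convergence of Proposition~\ref{limitsurf} to $C^{1,1}$ by means of interior elliptic estimates, once we know that the approximating surfaces are \emph{uniformly} space-like away from the singular and light-like loci. Fix a compact set $K\subset M$ disjoint from the singular lines and from the light-like locus of $S$. By the approximation result above, there is $n_K$ such that $g_n=g$ on a neighborhood of $K$ for all $n>n_K$; hence on this neighborhood each $S_n$ is a maximal surface for the \emph{fixed} smooth metric $g$, and its graph function $u_n$ (over a level set of the time function $f$, as in Proposition~\ref{limitsurf}) solves the maximal surface equation $\mathcal{L}(u_n)=0$ written in the quasi-linear divergence form of Equation~(\ref{2max}).

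The first and main step is to show that the $S_n$ are uniformly space-like on $K$ for $n$ large, i.e. that $\Vert \nabla u_n\Vert$ stays bounded away from $1$ uniformly in $n$. I would argue by contradiction: if along a subsequence one had points $x_n\to x\in K$ with $\Vert \nabla u_n(x_n)\Vert\to 1$, then the tangent planes of $S_n$ at $x_n$ would degenerate to a light-like plane, and a blow-up / limiting-tangent argument together with Gerhardt's theorem (quoted above) would force the limit surface $S$ to contain a maximal null segment through $x$. This contradicts the choice of $K$ as disjoint from the light-like locus. This is exactly where the confinement of the light-like locus to null rays joining two particles, established from Gerhardt's theorem, is used. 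The uniform space-like bound then makes $v_n=(1-\Vert \nabla u_n\Vert^2)^{-1/2}$ uniformly bounded, so Equation~(\ref{2max}) is \emph{uniformly} elliptic on $K$ with coefficients that are smooth in $(x,u_n,\nabla u_n)$ and satisfy uniform bounds.

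With uniform ellipticity in hand, the remaining step is a standard bootstrap. From the uniform $C^0$ bound on $u_n$, interior De Giorgi--Nash--Moser estimates (together with the interior gradient estimate for quasi-linear elliptic equations) give uniform $C^{1,\alpha}$ bounds on a slightly smaller compact; freezing the now-$C^{\alpha}$ coefficients $a^{ij}(x,u_n,\nabla u_n)$ turns the equation into a linear uniformly elliptic one, and an application of the interior Schauder estimates of Theorem~\ref{schauder} yields uniform $C^{2,\alpha}$ bounds. By Arzel\`a--Ascoli these bounds promote the convergence $u_n\to u$ to $C^{2,\beta}$, hence in particular $C^{1,1}$, on every compact avoiding the two loci, after passing to a further subsequence. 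Finally, passing to the limit in $\mathcal{L}(u_n)=0$ using this $C^2$-convergence shows that the limit graph $u$ satisfies $\mathcal{L}(u)=0$, i.e. that $S$ has vanishing mean curvature outside the singular lines and the light-like locus.

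The hard part will be the uniform space-like estimate of the second paragraph: everything downstream is routine elliptic regularity, but securing a uniform gap between $\Vert \nabla u_n\Vert$ and $1$ requires the geometric input (confinement of the light-like locus via Gerhardt's theorem, and the fact that the $S_n$ all lie in the convex core) rather than PDE theory alone.
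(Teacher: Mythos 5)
Your proposal is correct and follows the same overall skeleton as the paper: localize on a compact set avoiding the singular lines and the light-like locus, use the approximation result so that the $S_n$ are maximal for the fixed smooth metric $g$ there, establish a uniform space-like bound, run an elliptic bootstrap on the maximal surface equation in divergence form, and pass to the limit in the equation. The two arguments differ in how the two key steps are carried out. For the uniform space-like bound, the paper simply reads it off from the uniform convergence of the graphs to a limit that is space-like on the chosen compact; your contradiction argument via degenerating tangent planes and Gerhardt's theorem is in the same spirit but, as sketched, does not quite close: uniform convergence of $1$-Lipschitz graphs to a strictly space-like graph does not by itself rule out gradient concentration along the sequence, so the clean way to finish is the interior gradient estimate for maximal graphs (a maximum-principle bound on $v_n$ in terms of $\sup\vert u_n\vert$ and the distance to the boundary), a PDE input rather than a purely limiting-tangent one — though the paper is no more explicit on this point than you are. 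For the regularity upgrade, the paper works in Sobolev spaces: it quotes Gerhardt's $\W^{2,p}$ regularity for each $u_n$, bootstraps to smoothness via the Sobolev multiplication law, and then extracts a uniform bound on $\partial_i(\nabla_i u_n)$ directly from the divergence structure of equation (\ref{max}), concluding $\W^{2,p}$ (hence $\mathcal{C}^{1,1}$) convergence by Arzel\`a--Ascoli; you instead use De Giorgi--Nash--Moser together with the Schauder estimates of Theorem \ref{schauder} to get uniform $\mathcal{C}^{2,\alpha}$ bounds. Both routes prove the statement; yours yields slightly stronger ($\mathcal{C}^{2,\beta}$) convergence at the cost of invoking H\"older gradient estimates for quasilinear equations, while the paper's stays within the $L^p$ theory it already cites.
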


\begin{proof}
For a point $x\in S$ which neither lies on a singular line nor on a light-like locus, see a neighborhood $K\subset S$ of $x$ as the graph of a function $u$ over a piece of totally geodesic space-like plane $\Omega$. With an isometry $\Psi$, send $\Omega$ to the totally geodesic plane $P_0\subset \AdS^3$ given by the equation $P_0:=\{(t,\rho,\varphi)\in \AdS^3,~t=0\}$. We still denote by $S_n$ (respectively $S$, $u$ and $\Omega$) the image by $\Psi$ of $S_n$ (respectively $S$, $u$ and $\Omega$). Note that, for $n\in \mathbb{N}$ big enough, the metric $g_n$ coincides with the metric $g$ in a neighborhood of $K$ in $M$. So locally around $x$, the surfaces $S_n$ have vanishing mean curvature in $(M,g)$, hence their images in $\AdS^3$ have vanishing mean curvature.

Let $u_n:\Omega\longrightarrow \mathbb{R}$ be such that $S_n=\text{graph}(u_n)$. The unit future pointing normal vector to $S_n$ at $(x,u_n(x))$ is given by
$$N_n=v_n.\pi^*(1,\nabla u_n),$$
where $(1,\nabla u_n)\in T_x \AdS^3$ is the vector $(\partial_t,\nabla_\rho u_n,\nabla_\varphi u_n)$, $\pi: S_n \longrightarrow \Omega$ is the orthogonal projection on $P_0$ and $v_n=\big(1-\|\pi^* \nabla u_n \|^2)^{-1/2}$. The vanishing of the mean curvature of $S_n$ is equivalent to
$$-\delta_g N_n=0,$$
where $\delta_g$ is the divergence operator. In coordinates, this equation reads (see also \cite[Equation 1.14]{gerhardt1983h}):
\begin{equation}\label{max}
\frac{1}{\sqrt{\det g}}\partial_i(\sqrt{\det g}v_n g^{ij}\nabla_ju_n)+\frac{1}{2}v_n\partial_tg^{ij} \nabla_iu_n\nabla_j u_n-\frac{1}{2}v_n^{-1}g^{ij}\partial_tg_{ij}=0.
\end{equation}
Here, we wrote the metric 
$$g=-dt^2+g_{ij}(x,t)dx^idx^j,$$
applying the convention of Einstein for the summation (with indices $i,j=1,2$). The metric $g$ is taken at the points $(u_n(x),x)$ and $\det g$ is the determinant of the metric.

We have the following
\begin{lemma}
The solutions $u_n$ of equation (\ref{max}) are in $\mathcal{C}^\infty(\Omega)$.
\end{lemma}

\begin{proof}
This is a bootstrap argument. From \cite[Theorem 5.1]{gerhardt1983h}, we have $u_n\in \W^{2,p}(\Omega)$ for all $p\in[1,+\infty)$ (where $\W^{k,p}(\Omega)$ is the Sobolev space of functions over $\Omega$ admitting weak $L^p$ derivatives up to order $k$).

As $v_n$ is uniformly bounded from above and from below (because the surface $S_n$ is space-like), and as $u_n\in \W^{2,p}(\Omega)$, the third term  of equation (\ref{max}) is in $\W^{1,p}(\Omega)$. 

For the second term, we recall the multiplication law for Sobolev space: if $\frac{k}{2}-\frac{1}{p}>0$, then the product of functions in $\W^{k,p}(\Omega)$ is still in $\W^{k,p}(\Omega)$. So, as the second term of equation (\ref{max}) is a product of three terms in $\W^{1,p}(\Omega)$, it is in $\W^{1,p}(\Omega)$ (by taking $p>2$).

Hence the first term is in $\W^{1,p}(\Omega)$, and so $\sqrt{\det g}v_ng^{ij}\nabla_ju_n\in \W^{2,p}(\Omega)$. Moreover, as we can write the metric $g$ to that $g_{ij}=0$ whenever $i\neq j$ and as $\sqrt{\det g}g^{ii}$ are $\W^{2,p}(\Omega)$ and bounded from above and from below, $v_n \nabla_i u_n\in \W^{2,p}(\Omega)$. We claim that it implies $u_n\in \W^{3,p}$. It fact, for $f$ a never vanishing smooth function, consider the map
$$\begin{array}{llll}
\varphi:&  D\subset \mathbb{R}^2 & \longrightarrow & \mathbb{R}^2 \\
 & p & \longmapsto & (1-f^2(p)\vert p \vert^2)^{-1/2}p,
 \end{array}$$
where $D$ is a domain such that $f^2(p)\vert p \vert^2<1-\epsilon$ and $p\neq 0$.	The map $\varphi$ is a $\mathcal{C}^\infty$ diffeomorphism on its image, and we have $\big( \varphi(\nabla u_n)\big)_i\in \W^{2,p}(\Omega)$ for $i=1,2$  (in fact, as it is a local argument, we can always perturb $\Omega$ so that $\nabla u_n \neq 0$). Applying $\varphi^{-1}$, we get $\nabla_iu_n\in \W^{2,p}(\Omega)$ and so $u\in \W^{3,p}(\Omega)$.

Iterating the process, we obtain that $u_n \in \W^{k,p}(\Omega)$ for all $k\in \mathbb{N}$ and $p>1$ big enough. Using the Sobolev embedding Theorem 
$$\W^{j+k,p}(\Omega) \subset \mathcal{C}^{j,\alpha}(\Omega) \text{ for }0<\alpha<k-\frac{2}{p},$$
we get the result.
\end{proof}

Now, from Proposition \ref{limitsurf}, $u_n\overset{\mathcal{C}^{0,1}}{\longrightarrow} u$, that is $u_n\overset{\mathcal{W}^{1,p}}{\longrightarrow} u$ for all $p\in [1,+\infty)$.

Moreover, as the sequence of graphs of $u_n$ converges uniformly to a space-like graph, the sequence $(\nabla u_n)_{n\in \mathbb{N}}$ is uniformly bounded. From equation (\ref{max}), we get that there exists a constant $C>0$ such that for each $n\in\mathbb{N}$,
$$\vert \partial_i(\sqrt{\det h}v_ng^{ii}\nabla_iu_n)\vert <C.$$
As $(\nabla u_n)_{n\in\mathbb{N}}$ is uniformly bounded, the terms $\partial_i v_n$ are also uniformly bounded and we obtain
$$\vert \partial_i(\nabla_i u_n)\vert<C',$$
for some constant $C'$. 

Thus $(\nabla_i u_n)_{n\in\mathbb{N}}$ is a sequence of bounded Lipschitz functions with uniformly bounded Lipschitz constant so admits a convergent subsequence by Arzel\`a-Ascoli. It follows that 
$$u_n \overset{\W^{2,p}}{\longrightarrow} u,$$
for all $p\in [1,+\infty)$. Thus $u$ is a solution of equation (\ref{max}), and so $u\in\mathcal{C}^\infty(\Omega)$. Moreover, as $u$ satisfies equation (\ref{max}), $S$ has locally vanishing mean curvature.
\end{proof}

\subsection{Third step}

\begin{prop}\label{spacelike}
The surface $S$ of Proposition \ref{limitsurf} is space-like.
\end{prop}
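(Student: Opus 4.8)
The reduction carried out after Proposition~\ref{limitsurf} leaves exactly one possibility to exclude: a maximal null segment $\ell\subset S$ whose two endpoints lie on singular lines $d_1,d_2$. The plan is to argue by contradiction, assuming such an $\ell$ exists, and to work locally near one endpoint $x_1:=\ell\cap d_1$ in the model $\AdS^3_{\theta_1}$ with $\theta_1<\pi$, using in an essential way that the cone angle is smaller than $\pi$. The surface $S$ is achronal, being a uniform limit of the space-like Cauchy surfaces $S_n$, and it is this property that will clash with the presence of $\ell$.

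First I would determine which null geodesics of $\AdS^3_\theta$ can reach the singular line. Writing the metric as $-\cosh^2\rho\,dt^2+d\rho^2+\sinh^2\rho\,d\varphi^2$, the fields $\partial_t$ and $\partial_\varphi$ are Killing, so a null geodesic has conserved energy $E=\cosh^2\rho\,\dot t$ and angular momentum $L=\sinh^2\rho\,\dot\varphi$, and the null relation gives $\dot\rho^2=E^2/\cosh^2\rho-L^2/\sinh^2\rho$. Since $L^2/\sinh^2\rho$ blows up as $\rho\to0$ while $E^2/\cosh^2\rho$ stays bounded, a null geodesic can meet $\{\rho=0\}$ only if $L=0$, i.e.\ radially with $\dot\varphi\equiv0$. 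Hence near $x_1$ the segment $\ell$ is asymptotic to a radial null ray arriving in a single direction $\varphi_0$.

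The core of the argument is then causal. Near $x_1$ the boundary of the convex core is a totally geodesic space-like disk orthogonal to $d_1$, so in the coordinates $g=-dt^2+\cos^2t\,h_{\theta_1}$ I may take this disk to be $P_0=\{t=0\}$ and squeeze $S$ between the uniformly space-like umbilic barriers $\{t=\pm\epsilon\}$, which pins down the location of $x_1$ and of $\ell$ in the $t$-direction. I would then compare $S$ with the light cone of $x_1$. The radial null ray underlying $\ell$ leaves $P_0$ to one side, say the future; applying the rotation that glues the two sides of the angular sector of angle $\theta_1$ produces a second null direction issuing from $x_1$, and because $\theta_1<\pi$ these two directions are separated by less than a half-turn, so one of the corresponding null rays enters the timelike future of a point of the other. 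Both being limits of tangent directions to $S$ at points approaching $x_1$, this exhibits two points of $S$ in timelike relation, contradicting achronality. The value $\pi$ enters exactly as the threshold below which the angular room around the singular point is too small to keep the null ruling causally separated from its image under the cone holonomy.

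The step I expect to be the main obstacle is this last causal comparison, that is, converting ``$\theta_1<\pi$'' into a quantitative violation of achronality. The difficulty is that the metric degenerates along $\{\rho=0\}$, so the light cone of $x_1$ and the way $S$ must bend to contain a null segment reaching the singularity have to be controlled in the singular coordinates, while the squeezing between the barrier disks is used to fix the position of $\ell$. As a fallback I would instead attempt a maximum-principle argument on the space-like part of $S$: the light-like locus is precisely where the gradient function $v=(1-\|\nabla u\|^2)^{-1/2}$ blows up, and one would try to bound $v$ uniformly up to the particles from the maximal surface equation~(\ref{max}) together with the convex barriers, the obstacle there being once more the degeneration of the equation at the singular line.
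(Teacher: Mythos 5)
There is a genuine gap in the causal endgame. Your key claim --- that because the two null directions at $x_1$ are separated by less than a half-turn, ``one of the corresponding null rays enters the timelike future of a point of the other'' --- is false. In the flat cone model $\mathbb{R}^{2,1}_{\theta_1}$ (which governs the leading-order geometry at $x_1$, as the paper's zooming argument in Step~4 shows), two points $p_1=(s,s,\varphi_1)$ and $p_2=(r,r,\varphi_2)$ on future-pointing null rays issued from a point of the axis satisfy
$$\langle p_2-p_1,p_2-p_1\rangle=-(s-r)^2+s^2+r^2-2sr\cos\delta=2sr(1-\cos\delta)\geq 0,$$
whatever the angular separation $\delta$; and since $\delta\leq\theta_1/2<\pi/2$ no shortcut around or through the axis helps. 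So two null rays from $x_1$ are never chronologically related, and no violation of achronality is produced this way. The correct causal consequence of ``$\theta_1<\pi$'' plus achronality is different: the link of a light-like plane only reaches the equator $\mathcal{C}_{0,p}$ at angular distance $\pi/2$ from its null direction, so when the total angle is less than $\pi$ the whole surface is trapped strictly above the equatorial plane, i.e.\ $u(\rho,\theta)\geq\eta\rho$ in every direction. This is the paper's intermediate proposition on links --- and it is \emph{not} a contradiction by itself; a surface containing a null ray and tilted up everywhere near $x_1$ is perfectly achronal.

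What is missing from your proposal is the second, variational half of the argument. Once the link of $S$ at $p$ is confined strictly to one hemisphere of the de Sitter part of $\mathcal{L}_{p,\alpha}$, its length is strictly smaller than that of $\mathcal{C}_{0,p}$, hence the area of the graph of $u$ over $D_r$ is strictly smaller than that of the totally geodesic disk orthogonal to the particle; replacing $S$ near $S\cap d$ by that disk would strictly increase area, contradicting the local area-maximality that $S$ inherits as a limit of the maximal surfaces $S_n$. (The paper also needs a maximum-principle argument on annuli to upgrade the one-directional information carried by the null segment to a bound on $\liminf u/\rho$ in all directions, since the link is only defined via $\limsup$/$\liminf$.) Your reduction to a null segment between two particles, the observation that such a geodesic must arrive radially, and the identification of $\pi/2$ as the relevant angular threshold all match the paper's setup and are sound; but the proposed contradiction does not exist, and your fallback (a uniform gradient bound up to the particles) is not carried out and is not what closes the argument in the paper.
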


We are going to prove that, at its intersections with the singular lines, $S$ does not contain any light-like direction. To prove this, we are going to consider the link of $S$ at its intersection $p$ with a particle $d$. The link is essentially the set of rays from $p$ that are tangent to the surface. Denote by $\alpha$ the cone angle of the singular line. We see locally the surface as the graph of a function $u$ over a small disk $D_\alpha=D_\alpha(0,r)=((0,r)\times[0,\alpha))\cup\{0\}$ contained in the totally geodesic plane orthogonal to $d$ passing through $p$ (in particular, $u(0)=0$).

First, we describe the link at a regular point of an AdS convex GHM manifold, then the link at a singular point. The link of a surface at a smooth point is a circle in a sphere with an angular metric (called \textbf{HS-surface} in \cite{MR1630178}). As the surface $S$ is a priory not smooth, we will define the link of $S$ as the domain contained between the two curves given by the limsup and liminf at zero of $\displaystyle{\frac{u(\rho,\theta)}{\rho}}$.

\medskip
\noindent\textbf{The link of a point.}
Consider $p\in (M,g)$ not lying of a singular line. The tangent space $T_pM$ identifies with the Minkowski 3-space $\mathbb{R}^{2,1}$. We define the link of $M$ at $p$, that we denote by $\mathcal{L}_p$, as the set of rays from $p$, that is the set of half-lines from $0$ in $T_p M$. Geometrically, $\mathcal{L}_p$ is a 2-sphere, and the metric is given by the angle ''distance''. So one can see that $\mathcal{L}_p$ is divided into five subsets (depending on the type of the rays and on the causality):

\begin{itemize}
\item The set of future and past pointing time-like rays that carries a hyperbolic metric.
\item The set of light-like rays defines two circles called \textbf{past and future light-like circles}.
\item The set of space-like rays which carries a de Sitter metric.
\end{itemize}

To obtain the link of a point lying on a singular line of angle $\alpha \leq 2\pi$, we cut $\mathcal{L}_p$ along two meridian separated by an angle $\alpha$ and glue by a rotation. We get a surface denoted $\mathcal{L}_{p,\alpha}$ (see Figure \ref{gluedlink}).

\begin{figure}[!h] 
\begin{center}
\includegraphics[height=7.5cm]{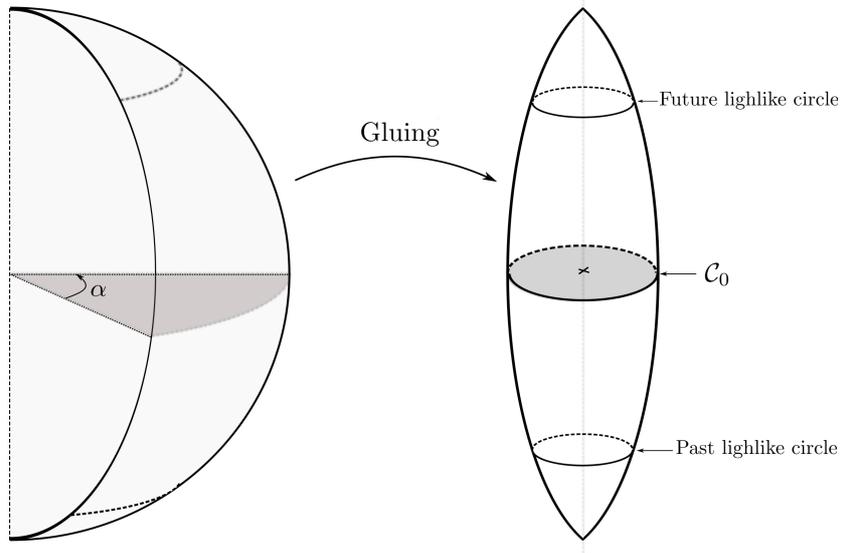}
\end{center}
\caption{Link at a singular point} 
\label{gluedlink}
\end{figure}

\medskip
\noindent\textbf{The link of a surface.}
Let $\Sigma$ be a smooth surface in $(M,g)$ and $p\in \Sigma$ not lying on a singular line. The space of rays from $p$ tangent to $\Sigma$ is just the projection of the tangent plane to $\Sigma$ on $\mathcal{L}_p$ and so describe a circle in $\mathcal{L}_p$. Denote this circle by $\mathcal{C}_{\Sigma,p}$. Obviously, if $\Sigma$ is a space-like surface, $\mathcal{C}_{\Sigma,p}$ is a space-like geodesic in the de Sitter domain of $\mathcal{L}_p$ and if $\Sigma$ is time-like or light-like, $\mathcal{C}_{\Sigma,p}$ intersects one of the time-like circle in $\mathcal{L}_p$.

Now, if $p\in \Sigma$ belongs to a singular line of angle $\alpha$ and is not smooth, we define the link of $\Sigma$ at $p$ as the domain $\mathcal{C}_{\Sigma,p}$ delimited by the limsup and the liminf of $\displaystyle{\frac{u(\rho,\theta)}{\rho}}$. 

We have an important result:

\begin{prop}
Let $\Sigma$ be a nowhere time-like surface which intersects a singular line of angle $\alpha<\pi$ at a point $p$. If $\mathcal{C}_{\Sigma,p}$  intersects a light-like circle in $\mathcal{L}_{p,\alpha}$, then $\mathcal{C}_{\Sigma,p}$ does not cross $\mathcal{C}_{0,p}$. That is, $\mathcal{C}_{\Sigma,p}$ remains strictly in one hemisphere (where a hemisphere is a connected component of $\mathcal{L}_{p,\alpha} \setminus \mathcal{C}_{0,p}$).
\end{prop}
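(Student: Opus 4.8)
The plan is to reduce everything to the flat tangent cone at $p$ and then to a single one-dimensional comparison on the circle of circumference $\alpha$; the hypothesis $\alpha<\pi$ will enter exactly once, through the fact that a $1$-Lipschitz function on a circle of length less than $\pi$ cannot oscillate by as much as $\pi/2$.

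First I would work in the local model, where the metric reads $-dt^2+d\rho^2+\rho^2\,d\varphi^2$ with $\varphi\in[0,\alpha)$ (the higher order terms $\cos^2 t=1+O(t^2)$ and $\sinh^2\rho=\rho^2(1+O(\rho^2))$ perturb the relevant quantities only by $o(1)$ and so do not affect the link), and write $\Sigma$ locally as the graph $t=u(\rho,\varphi)$ with $u(0)=0$. The hypothesis that $\Sigma$ is nowhere time-like is equivalent to the pointwise bound $(\partial_\rho u)^2+\rho^{-2}(\partial_\varphi u)^2\le 1$. In the link $\mathcal{L}_{p,\alpha}$ the de Sitter domain is a cylinder whose spatial circle (the longitude $\varphi$) has circumference $\alpha$; a ray over the longitude $\varphi$ carries a slope $s(\varphi)=\lim_{\rho\to0}u(\rho,\varphi)/\rho$, and in these coordinates the two light-like circles are $\{s=\pm1\}$ while the equator $\mathcal{C}_{0,p}$ is $\{s=0\}$. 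Thus ``$\mathcal{C}_{\Sigma,p}$ meets the future light-like circle'' means $s(\varphi_*)=1$ for some $\varphi_*$, and ``$\mathcal{C}_{\Sigma,p}$ crosses $\mathcal{C}_{0,p}$'' means $s$ changes sign.

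The key step is to encode the nowhere time-like condition as a Lipschitz bound on an angle. Writing $s=\sin\chi$, the infinitesimal form of the bound above, namely $s^2+(s')^2\le1$ (obtained by letting $\rho\to0$, using $\partial_\rho u\to s$ and $\rho^{-1}\partial_\varphi u\to s'$), becomes $(\chi')^2\le1$, i.e.\ $\chi$ is $1$-Lipschitz as a map from the circle $\mathbb{R}/\alpha\mathbb{Z}$ to $\mathbb{R}/2\pi\mathbb{Z}$. This is the geometric heart of the matter: the Gauss map of the link curve turns at most at unit speed. The comparison then uses $\alpha<\pi$ directly. Since $\chi$ is $1$-Lipschitz on a circle of length $\alpha$, its total variation is at most $\alpha<2\pi$, so $\chi$ has winding number zero and lifts to a genuine real-valued function; a closed $1$-Lipschitz function has oscillation at most $\alpha/2<\pi/2$. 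If $\mathcal{C}_{\Sigma,p}$ meets the future light-like circle then $\chi(\varphi_*)=\pi/2$, whence $\chi$ takes values in $(\pi/2-\alpha/2,\pi/2+\alpha/2)\subset(0,\pi)$ and therefore $s=\sin\chi>0$ everywhere, so $\mathcal{C}_{\Sigma,p}$ stays strictly in the future hemisphere and never meets $\mathcal{C}_{0,p}$. The past light-like circle ($\chi=-\pi/2$) is symmetric.

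The main obstacle is not this clean comparison but its justification when $\Sigma$ is only nowhere time-like and a priori non-smooth, so that $\mathcal{C}_{\Sigma,p}$ is the band between $\liminf$ and $\limsup$ of $u(\rho,\varphi)/\rho$ rather than a single geodesic. I would therefore run the argument on the two boundary functions $\overline{s}=\limsup$ and $\underline{s}=\liminf$, establishing the $1$-Lipschitz property of the associated angles directly from the uniform $1$-Lipschitz control of $u(\rho,\cdot)$ at small $\rho$ together with a diagonal extraction, and checking that meeting a light-like circle forces the relevant angle to equal $\pi/2$ at some point. One should also note that it suffices to treat one boundary function: since both $\overline s$ and $\underline s$ satisfy the same constraint and $\overline s(\varphi_*)=1$ pins down $\chi(\varphi_*)=\pi/2$, the whole band is trapped in $\{s>0\}$, which is precisely the assertion that $\mathcal{C}_{\Sigma,p}$ remains strictly in one hemisphere.
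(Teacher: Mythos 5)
Your central comparison is, in substance, the paper's: the paper argues that a nowhere time-like surface whose link meets the future light-like circle in the direction $\varphi_*$ must lie in the future of the light-like plane containing that direction, and the link of that plane is the curve $s(\varphi)=\cos d_a(\varphi,\varphi_*)$, which stays strictly above the equator because every direction is within angular distance $\alpha/2<\pi/2$ of $\varphi_*$. Your bound $\chi(\varphi)\ge \pi/2-d_a(\varphi,\varphi_*)$, i.e.\ $s\ge\cos d_a$, is exactly that barrier, so the two arguments coincide where they are both complete.

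The gap is in how you propose to justify the $1$-Lipschitz property of $\chi=\arcsin s$ for a merely nowhere time-like (hence only Lipschitz) surface. You say it follows ``from the uniform $1$-Lipschitz control of $u(\rho,\cdot)$ at small $\rho$'', i.e.\ from achronality applied to pairs of points at the \emph{same} radius. That comparison only yields $\vert s(\varphi)-s(\varphi')\vert\le 2\sin\big(d_a(\varphi,\varphi')/2\big)$ (essentially the paper's Lipschitz lemma for $\mathcal{C}_\pm$), and since $\arcsin$ degenerates at $\pm 1$ this is strictly weaker than $\vert\chi'\vert\le 1$ near the light-like circle: from $s(\varphi_*)=1$ it gives only $s\ge 1-2\sin(d_a/2)$, which is positive only for $d_a<\pi/3$, hence proves the proposition only for $\alpha<2\pi/3$, not for all $\alpha<\pi$. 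The estimate you actually need, $s\ge\cos d_a$, requires comparing points at \emph{different} radii: achronality of a point $(\rho,\varphi,u(\rho,\varphi))$ against the points $(\rho_k,\varphi_*,u_k)$ with $u_k/\rho_k\to 1$ and $\rho_k/\rho\to 0$, which is precisely achronality against the light-like plane through the light-like tangent ray. (The same issue affects your reduction to a single blow-up limit: the band between $\underline{s}$ and $\overline{s}$ is not the link of one subsequential blow-up, so the sequence realizing $\overline{s}(\varphi_*)=1$ must be played against $\underline{s}$ at all other angles on the surface itself.) Replacing the fixed-radius estimate by this light-plane barrier closes your argument and recovers the paper's proof.
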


\begin{proof}
Fix a non-zero vector $u\in T_p(\Sigma)$ and for $\theta\in [0,\alpha)$, denote by $v_\theta$ the unit vector making an angle $\theta$ with $u$. Suppose that $v_{\theta_0}$ corresponds to the direction where $\mathcal{C}_{\Sigma,p}$ intersects a light-like circle, for example, the future light-like circle. As the surface is nowhere time-like, $\Sigma$ remains in the future of the light-like plane containing $v_{\theta_0}$. But the link of a light-like plane at a non singular point $p$ is a great circle in $\mathcal{L}_p$ which intersects the two different light-like circles at the directions given by $v_{\theta_0}$ and $v_{\theta_0+\pi}$. So it intersects $\mathcal{C}_{0,p}$ at the directions $v_{\theta_0\pm\pi/2}$.

Now, if $p$ belongs to a singular line of angle $\alpha<\pi$, the link of the light-like plane which contains $v_{\theta_0}$ is obtained by cutting the link of $p$ along the directions of $v_{\theta_0\pm\alpha/2}$ and gluing the two wedges by a rotation (see the Figure \ref{gluedlink}). So, the link of our light-like plane remains in the upper hemisphere, which implies the result.
\end{proof}

\begin{rem}
Equivalently, we get that if $\mathcal{C}_{\Sigma,p}$ intersects $\mathcal{C}_{0,p}$, it does not intersect a light-like circle.
\end{rem} 

In particular, if link of $\Sigma$ at $p$, $\mathcal{C}_{\Sigma,p}$ is continuous, there exists $\eta>0$ (depending of $\alpha$) such that:
\begin{itemize}
\item If $\mathcal{C}_{\Sigma,p}$ $\theta_0$ intersects the future light-like circle, then
\begin{equation} \label{eq:1}
 u(\rho,\theta)\geq \eta.\rho~\forall\theta\in[0,\alpha),~\rho\ll 1.
 \end{equation}
\item If $\mathcal{C}_{\Sigma,p}$ $\theta_0$ intersects $\mathcal{C}_{0,p}$, then
\begin{equation} \label{eq:2}
u(\rho,\theta)\leq (1-\eta).\rho~\forall\theta\in[0,\alpha)~\rho\ll 1.
\end{equation}
\end{itemize}

\begin{figure}[!h] 
\begin{center}
\includegraphics[height=9cm]{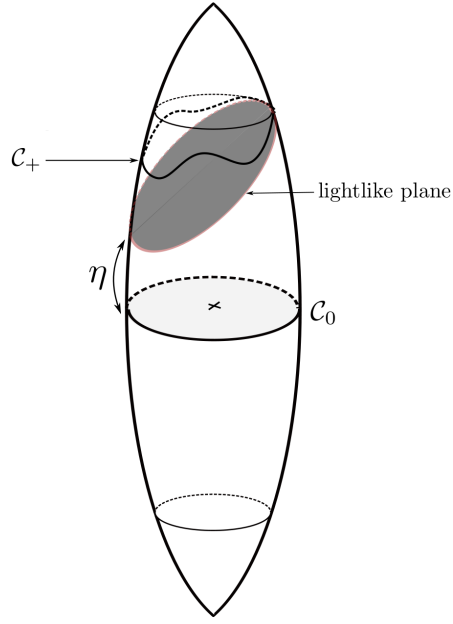}
\end{center}
\caption{The link remains in the upper hemisphere} 
\label{link2}
\end{figure}

These two results will be used in the next part.

\medskip
\noindent\textbf{Link of $S$ and orthogonality.}
Let $S$ be the limit surface of Proposition \ref{limitsurf} and let $p\in S$ be an intersection with a singular line $d$ of angle $\alpha<\pi$. As previously, we consider locally $S$ as the graph of a function
 $$u:D_\alpha \rightarrow \mathbb{R}$$
  in a neighborhood of $p$. Let $\mathcal{C}_{S,p}\subset \mathcal{L}_{p,\alpha}$ be the ``augmented'' link of $S$ at $p$, that is, the connected domain contained  between the curves $\mathcal{C}_\pm$, where $\mathcal{C}_+$ is the curve corresponding to $\displaystyle{\underset{\rho\to 0}{\limsup}\frac{u(\rho,\theta)}{\rho}}$, and $\mathcal{C}_-$ corresponding to the liminf.

\begin{lemma}
The curves $\mathcal{C}_+$ and $\mathcal{C}_-$ are $\mathcal{C}^{0,1}$.
\end{lemma}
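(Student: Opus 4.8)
The plan is to realize both $\mathcal{C}_+$ and $\mathcal{C}_-$ as graphs, over the angular variable $\theta\in[0,\alpha)$, of the functions
$$\phi_+(\theta):=\underset{\rho\to 0}{\limsup}\,\frac{u(\rho,\theta)}{\rho},\qquad \phi_-(\theta):=\underset{\rho\to 0}{\liminf}\,\frac{u(\rho,\theta)}{\rho},$$
and to prove that $\phi_+$ and $\phi_-$ are Lipschitz in $\theta$. Since the link metric on $\mathcal{L}_{p,\alpha}$ is bi-Lipschitz to the product coordinates $(\theta,\lambda)$, where $\lambda$ records the slope of a tangent ray, a Lipschitz bound for $\phi_\pm$ in $\theta$ transfers directly to $\mathcal{C}_\pm$ being $\mathcal{C}^{0,1}$. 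First I would record that $\phi_\pm$ are well defined and finite: because $S$ is nowhere time-like and $u(0)=0$, the radial derivative of $u$ is bounded by $1$, whence $|u(\rho,\theta)|\leq \rho$ and $\phi_\pm(\theta)\in[-1,1]$.

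The core of the argument is an angular Lipschitz estimate for $u$. Writing the local metric of $(M,g)$ on the totally geodesic plane orthogonal to $d$ as $h_\alpha=d\rho^2+f(\rho)^2\,d\varphi^2$, with $f(\rho)/\rho\to c_0>0$ as $\rho\to 0$ (the constant $c_0$ encoding the cone angle $\alpha$), the condition that the graph $t=u$ be nowhere time-like in $g=-dt^2+\cos^2t\, h_\alpha$ reads
$$(\partial_\rho u)^2+f(\rho)^{-2}(\partial_\varphi u)^2\leq \cos^2 u\leq 1$$
almost everywhere. In particular $|\partial_\varphi u|\leq f(\rho)$ a.e., and integrating along the circle of radius $\rho$ gives
$$|u(\rho,\theta_1)-u(\rho,\theta_2)|\leq f(\rho)\,|\theta_1-\theta_2|,\qquad \rho\ll 1.$$

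Dividing by $\rho$ yields
$$\left|\frac{u(\rho,\theta_1)}{\rho}-\frac{u(\rho,\theta_2)}{\rho}\right|\leq \frac{f(\rho)}{\rho}\,|\theta_1-\theta_2|.$$
I would then pass to the limit $\rho\to 0$. Using the elementary inequality $|\limsup a_\rho-\limsup b_\rho|\leq \limsup|a_\rho-b_\rho|$ (and its liminf analogue) together with $f(\rho)/\rho\to c_0$, this gives
$$|\phi_\pm(\theta_1)-\phi_\pm(\theta_2)|\leq c_0\,|\theta_1-\theta_2|,$$
so that $\phi_+$ and $\phi_-$, and hence the curves $\mathcal{C}_+$ and $\mathcal{C}_-$, are $\mathcal{C}^{0,1}$.

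The step I expect to require the most care is the angular gradient bound at the cone point. The coordinates $(\rho,\varphi)$ degenerate as $\rho\to 0$, so I must justify that the nowhere-time-like inequality holds in the weak (almost everywhere) sense that legitimizes integrating $\partial_\varphi u$ along shrinking circles, and that the normalization $f(\rho)/\rho\to c_0$ is uniform enough to survive division by $\rho$ and the passage to $\limsup$/$\liminf$. A secondary point is to verify that the abstract curves $\mathcal{C}_\pm\subset\mathcal{L}_{p,\alpha}$ really are the graphs of $\phi_\pm$ and that the link metric is comparable to the $(\theta,\lambda)$ coordinates, so that the Lipschitz control of $\phi_\pm$ genuinely transfers to $\mathcal{C}_\pm$.
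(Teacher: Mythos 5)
Your proposal is correct and follows essentially the same route as the paper: both rest on the single key estimate that, since $S$ is nowhere time-like, $u$ at radius $\rho$ is angularly Lipschitz with constant comparable to $\rho$ (you obtain it by integrating the a.e.\ gradient bound along the circle, the paper by staying inside the causal cone of each point of $S$), after which one divides by $\rho$ and passes to $\limsup$/$\liminf$. Your use of the inequality $\vert \limsup a_\rho - \limsup b_\rho\vert \leq \limsup \vert a_\rho - b_\rho\vert$ just streamlines the paper's explicit subsequence bookkeeping and changes nothing essential.
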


\begin{proof}
We give the proof for $\mathcal{C}_-$ (the one for $\mathcal{C}_+$ is analogue). For $\theta\in [0,\alpha)$, denote by
$$k(\theta):=\underset{\rho\to 0}{\liminf}\frac{u(\rho,\theta)}{\rho}.$$
Fix $\theta_0,\theta\in [0,\alpha)$. By definition, there exists a decreasing sequence $(\rho_k)_{k\in\mathbb{N}}\subset\mathbb{R}_{>0}$ such that $\underset{k\to\infty}{\lim\rho_k}=0$ and
$$\underset{k\to\infty}{\lim}\frac{u(\rho_k,\theta_0)}{\rho_k}=k(\theta_0).$$
As $S$ is nowhere time-like, for each $k\in\mathbb{N}$, $S$ remains in the cone of space-like and light-like geodesic from $((\rho_k,\theta_0),u(\rho_k,\theta_0))\in S$. That is,
$$\vert u(\rho_k,\theta)-u(\rho_k,\theta_0)\vert \leq d_a(\theta,\theta_0)\rho_k,$$
where $d_a$ is the angular distance between two directions. So we get
$$\underset{k\to\infty}{\lim}\frac{u(\rho_k,\theta)}{\rho_k}\leq k(\theta_0)+d_a(\theta,\theta_0),$$
and so
$$k(\theta)\leq k(\theta_0)+d_a(\theta,\theta_0).$$
On the other hand, for all $\epsilon>0$ small enough, there exists $R>0$ such that, for all $\rho\in(0,R)$ we have:
$$u(\rho,\theta_0)>(k(\theta_0)-\epsilon)\rho.$$
By the same argument as before, because $S$ is nowhere time-like, we get
$$\vert u(\rho,\theta)-u(\rho,\theta_0)\vert \leq d_a(\theta,\theta_0)\rho,$$
that is
$$u(\rho,\theta)\geq u(\rho,\theta_0)-d_a(\theta,\theta_0)\rho.$$
So
$$u(\rho,\theta)>(k(\theta_0)-\epsilon)\rho-d_a(\theta,\theta_0)\rho,$$
taking $\epsilon\to 0$, we obtain
$$k(\theta)\geq k(\theta_0) - d_a(\theta,\theta_0).$$
So the function $k$ is 1-Lipschitz
\end{proof}

Now we can prove Proposition \ref{spacelike}. Suppose that $S$ is not space-like, that is, $S$ contains a light-like direction at an intersection with a singular line. For example, suppose that $\mathcal{C}_+$ intersects the upper light-like circle (the proof is analogue if $\mathcal{C}_-$ intersects the lower light-like circle). The proof will follow from the following lemma:

\begin{lemma} 
If the curve $\mathcal{C}_+$ intersects the future light-like circle, then $\displaystyle{\underset{\rho\to0}{\liminf}\frac{u(\rho,\theta)}{\rho}\geq\eta}$ for all $\theta \in [0,\alpha)$.
\end{lemma}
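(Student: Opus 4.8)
The plan is to show that the hypothesis forces $S$, near $p$, to lie in the future of a single light-like plane through $p$, and then to read off the bound from the geometry of that plane inside the tangent cone $\mathcal{L}_{p,\alpha}$. Saying that $\mathcal{C}_+$ meets the future light-like circle at some angle $\theta_0$ means exactly that $\limsup_{\rho\to 0}u(\rho,\theta_0)/\rho$ attains the value corresponding to the future light-like circle, so that the future light-like direction $v_{\theta_0}$ is an asymptotic tangent direction of $S$ at $p$. In particular the augmented link $\mathcal{C}_{S,p}$ meets the future light-like circle, and I would invoke the Proposition proved above on the link of a nowhere time-like surface: since $S$ is nowhere time-like, it remains, in a neighborhood of $p$, in the future of the light-like plane $\Pi$ passing through $p$ and containing $v_{\theta_0}$. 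This is precisely the intermediate step already established there, and it is phrased for the augmented link, so the fact that only the $\limsup$ (and not the genuine slope) reaches the light-like value along a sequence is already accounted for.

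Once $S$ is known to stay in the future of $\Pi$, the conclusion is a direct computation on $\mathcal{L}_{p,\alpha}$. In the normal polar coordinates $(\rho,\theta)$ the light-like plane through $p$ containing $v_{\theta_0}$ is modelled, to leading order, by the graph $\{t=\rho\cos(\theta-\theta_0)\}$; after cutting along the directions $v_{\theta_0\pm\alpha/2}$ and regluing (exactly as in the description of $\mathcal{L}_{p,\alpha}$), its slope becomes $\cos\big(d_a(\theta,\theta_0)\big)$, where $d_a$ denotes the angular distance in $\mathcal{L}_{p,\alpha}$. Hence the containment of $S$ in the future of $\Pi$ yields, at the level of asymptotic slopes,
\[
\liminf_{\rho\to 0}\frac{u(\rho,\theta)}{\rho}\ \geq\ \cos\big(d_a(\theta,\theta_0)\big)\qquad\text{for all }\theta\in[0,\alpha).
\]
Because the cone angle satisfies $\alpha<\pi$, every direction lies within angular distance $\alpha/2<\pi/2$ of $\theta_0$, so $d_a(\theta,\theta_0)\leq\alpha/2$ and therefore $\cos\big(d_a(\theta,\theta_0)\big)\geq\cos(\alpha/2)=:\eta>0$, which is exactly the assertion of the Lemma.

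The step I expect to be the real obstacle is the first one: genuinely placing the whole surface in the future of $\Pi$ when the slope in the direction $\theta_0$ reaches the light-like value only as a $\limsup$, along a sparse sequence $\rho_k\to 0$. The clean way to make this rigorous is a blow-up at $p$: rescaling $S$ by $1/\rho_k$ produces, up to a subsequence, a nowhere time-like asymptotic cone whose link is $\mathcal{C}_{S,p}$ and which contains the light-like ray $v_{\theta_0}$; for such a cone $\Pi$ is a genuine support plane, and one recovers the containment above. By contrast, the coordinate computation and the estimate $\cos(d_a)\geq\cos(\alpha/2)$ are routine, and it is here that the hypothesis $\alpha<\pi$ is used essentially: for $\alpha\geq\pi$ some direction would satisfy $d_a(\theta,\theta_0)\geq\pi/2$, the cosine bound would degenerate to a nonpositive quantity, and no positive $\eta$ could be extracted.
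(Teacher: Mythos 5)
Your final step is sound, and it is indeed where $\alpha<\pi$ enters: once one knows that near $p$ the surface lies in the future of the light-like plane $\Pi$ through $p$ containing $v_{\theta_0}$, the bound $\liminf_{\rho\to0}u(\rho,\theta)/\rho\geq\cos\big(d_a(\theta,\theta_0)\big)\geq\cos(\alpha/2)>0$ follows. The genuine gap is the containment itself, and your proposed repair does not close it. The hypothesis only says that $u(\rho_k,\theta_0)/\rho_k\to 1$ along some sequence $\rho_k\to0$; since $S$ is merely $1$-Lipschitz (nowhere time-like), the point $\big(\rho_k,\theta_0,u(\rho_k,\theta_0)\big)$ controls $u$ from below only on an annulus roughly of the form $\epsilon_k\rho_k\lesssim\rho\lesssim\rho_k$, where $\epsilon_k=1-u(\rho_k,\theta_0)/\rho_k$. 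If the sequence is sparse (say $\rho_{k+1}\ll\epsilon_k\rho_k$) these annuli leave gaps, and on the gaps achronality imposes no positive lower bound. Indeed, a rotationally symmetric $1$-Lipschitz graph $t=\phi(\rho)$ over the cone with $\phi(\rho_k)=(1-1/k)\rho_k$ along a very rapidly decreasing sequence and $\phi\equiv0$ on intermediate annuli is nowhere time-like, its $\mathcal{C}_+$ touches the future light-like circle, yet $\liminf\phi(\rho)/\rho=0$. So no argument using only the nowhere time-like property can prove the lemma; the maximality of $S$ must intervene, and your proposal never uses it. The blow-up does not supply the missing input: the rescalings $u(\rho_k\,\cdot)/\rho_k$ converge (subsequentially) to a $1$-Lipschitz limit that is in general not a cone, whose trace on the unit circle is a single curve rather than the augmented link $\mathcal{C}_{S,p}$, and whose support-plane property transfers back to $S$ only at scales comparable to $\rho_k$ --- precisely not in the gaps.

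The paper closes exactly this gap with the maximum principle for the maximal surface equation (valid because $S$ has vanishing mean curvature away from the particle, so it cannot touch a strictly convex level surface $\{t=c\}$ from the wrong side at an interior point). Applied on the annuli $\rho_{k+1}\leq\rho\leq\rho_k$ this propagates the boundary bound to $u\geq\widetilde{\eta}\,\rho_{k+1}$ on the whole annulus; then, assuming $\liminf_{\rho\to 0} u(\rho,\theta_1)/\rho=0$ for some $\theta_1$, a second sequence $r_k$ together with the upper bound coming from the link proposition and the maximum principle on the corresponding annuli yields incompatible estimates on the ratios $\rho_{k+1}/\rho_k$, a contradiction. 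You need to incorporate this use of maximality (or an equivalent one); without it the argument proves a false statement.
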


\begin{proof}

As $\mathcal{C}_+$ intersects the upper time-like circle, there exist $\theta_0\in[0,\alpha)$, and $(\rho_k)_{k\in\mathbb{N}}\subset \mathbb{R}_{>0}$ a strictly decreasing sequence, converging to zero, such that
$$\underset{k\to\infty}{\lim}\frac{u(\rho_k,\theta_0)}{\rho_k}=1.$$
From (\ref{eq:1}), for a fixed $\eta>\widetilde{\eta}$, there exist $k_0\in\mathbb{N}$ such that:
$$\forall k>k_0,~u(\rho_k,\theta)\geq\widetilde{\eta}\rho_k~\forall\theta\in[0,\alpha).$$

As $S$ has vanishing mean curvature outside its intersections with the singular locus, we can use a maximum principle. Namely, if a strictly future-convex surface $\Sigma$ intersects $S$ at a point $x$ outside the singular locus, then $S$ lies locally in the future of $\Sigma$ (the case is analogue for past-convex surfaces). It follows that on an open set $V\subset D_\alpha$, $\underset{x\in V}{\sup} u(x) = \underset{x\in \partial V}{\sup} u(x)$ and $\underset{x\in V}{\inf} u(x) = \underset{x\in \partial V}{\inf} u(x)$

Now, consider the open annulus $A_k:=D_k\setminus \overline{D}_{k+1}\subset D_\alpha$ where $D_k$ is the open disk of center 0 and radius $\rho_k$. As $S$ is a maximal surface, we can apply the maximum principle to $u$ on $A_k$, we get:
$$\inf_{A_k}u=\min_{\partial A_k}u\geq \widetilde{\eta} \rho_{k+1}.$$
So, for all $\rho\in[0,r)$, there exists $k\in \mathbb{N}$ such that $\rho\in[\rho_{k+1},\rho_k]$ and 
\begin{equation} \label{eq:3}
u(\rho,\theta)\geq\widetilde{\eta}\rho_{k+1}.
\end{equation}
We obtain that, $\forall \theta \in[0,\alpha),~u(\rho,\theta)>0$ and so $\displaystyle{\underset{\rho\to0}{\liminf}\frac{u(\rho,\theta)}{\rho}\geq 0}$. 

Now, suppose that 
$$\exists \theta_1\in[0,\alpha) \text{ such that } \displaystyle{\underset{\rho\to0}{\liminf}\frac{u(\rho,\theta_1)}{\rho}=0},$$
then there exists $(r_k)_{k\in\mathbb{N}}\subset\mathbb{R}_{>0}$ a strictly decreasing sequence converging to zero with 

$$\displaystyle{\underset{k\to\infty}{\lim}\frac{u(r_k,\theta_1)}{r_k}=0}.$$ Moreover, we can choose a subsequence of $(\rho_k)_{k\in\mathbb{N}}$ and $(r_k){k\in\mathbb{N}}$ such that $r_k\in[\rho_{k+1},\rho_k[~\forall k\in\mathbb{N}$.

This implies, by (\ref{eq:2}) that there exist $k_1\in\mathbb{N}$ such that 
$$\forall k>k_1,~ u(r_k,\theta)\leq (1-\widetilde{\eta}) r_k~\forall \theta \in [0,\alpha).$$

Now, applying the maximum principle to the open annulus $B_k:=\mathcal{D}'_k\setminus \overline{\mathcal{D}'}_{k+1}\subset D_\alpha$ where $\mathcal{D}'_k$ is the open disk of center 0 and radius $r_k$, we get:
$$\underset{B_k}{\sup u}= \underset{\partial B_k}{\max u} \leq  (1-\widetilde{\eta}) r_k.$$

And so we get that for all $\rho\in [0,r)$ there exists $k\in\mathbb{N}$ with $\rho\in [r_{k+1},r_k]$ and we have:
\begin{equation} \label{eq:4}
u(\rho,\theta)\leq (1-\widetilde{\eta})r_k\leq (1-\widetilde{\eta})\rho_k.
\end{equation}

Now we are able to prove the lemma: 

Take $\epsilon<1$, as $\displaystyle{\lim\frac{u(\rho_k,\theta_0)}{\rho_k}=1}$, there exist $k_3\in\mathbb{N}$ such that:
 
$$\forall k>k_3,~u(\rho_k,\theta_0)\geq (1-\epsilon\widetilde{\eta})\rho_k.$$
Using \eqref{eq:4} we get:

\begin{equation}\label{eq:5}
(1-\epsilon.\widetilde{\eta}).\rho_k\leq u(\rho_k,\theta_0)\leq(1-\widetilde{\eta})\rho_{k+1}, ~\text{and so }~ \frac{\rho_{k+1}}{\rho_k}\leq\frac{1-\epsilon.\widetilde{\eta}}{1-\widetilde{\eta}}.
\end{equation}

Now, as $\displaystyle{\lim\frac{u(r_k,\theta_0)}{r_k}=0}$, there exist $N'\in\mathbb{N}$ such that, for all $k$ bigger than $N'$ we have:
$$u(r_k,\theta_1)\leq\epsilon.\widetilde{\eta}.r_k\leq\epsilon.\widetilde{\eta}.\rho_k.$$
Using \eqref{eq:3}, we get:

\begin{equation}\label{eq:6}
\widetilde{\eta}.\rho_{k+1}\leq u(r_k,\theta_0)\leq\epsilon.\widetilde{\eta}.\rho_k, ~\text{and so }~ \frac{\rho_{k+1}}{\rho_k}\leq\epsilon.
\end{equation}

But as $\epsilon<1$, the conditions \eqref{eq:5} and \eqref{eq:6} are incompatibles.
\end{proof}

Now, as the curve $\mathcal{C}_-$ does not cross $\mathcal{C}_{0,p}$ and is contained in the de Sitter domain, we obtain $l(\mathcal{C}_-)<l(\mathcal{C}_{0,p})$ (where $l$ is the length). For $D_r\subset D_\alpha$ the disk of radius $r$ and center 0 and $A_g(u(D_r))$ the area of the graph of $u_{\vert D_r}$, we get:

\begin{eqnarray*}
A_g(u(D_r)) & \leq & \int_0^rl(\mathcal{C}_-)\rho d\rho \\
& < & \int_0^r l(\mathcal{C}_{0,p})\rho d\rho.
\end{eqnarray*}

The first inequality comes from the fact that $\displaystyle{\int_0^rl(\mathcal{C}_-)\rho d\rho}$ corresponds to the area of a flat piece of surface with link $\mathcal{C}_-$ which is bigger than the area of a curved surface (because we are in a Lorentzian manifold). 

So, the local deformation of $S$ sending a neighborhood of $S\cap d$ to a piece of totally geodesic disk orthogonal to the singular line would strictly increase the area of $S$. However, as $S$ is a limit of a sequence of maximal surfaces, such a deformation does not exist. So $\mathcal{C}_{S,p}$ cannot cross the light-like circles.

\subsection{Fourth step}\label{ortho}

\begin{prop}\label{fourthstep}
The surface $S\hookrightarrow (M,g)$ of Proposition \ref{limitsurf} is orthogonal to the singular lines.
\end{prop}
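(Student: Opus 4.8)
Fix a particle $d$ of cone angle $\alpha<\pi$ and set $p:=S\cap d$. As in the proof of Proposition \ref{spacelike}, I would write $S$ near $p$ as the graph of a function $u:D_\alpha\to\mathbb{R}$ over the totally geodesic plane $P_0$ orthogonal to $d$ at $p$, with $u(0)=0$. Since the causal distance from a point $y\in S$ to $P_0$ is comparable to $|u|$, while $d_S(p,y)$ is comparable to $\rho$ (the surface being uniformly space-like by Proposition \ref{spacelike}), orthogonality of $S$ at $p$ is equivalent to $u(\rho,\theta)=o(\rho)$, i.e.\ to the vanishing of both $k_+(\theta):=\underset{\rho\to0}{\limsup}\,u(\rho,\theta)/\rho$ and $k_-(\theta):=\underset{\rho\to0}{\liminf}\,u(\rho,\theta)/\rho$. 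Equivalently, the augmented link $\mathcal{C}_{S,p}$ must coincide with the equatorial circle $\mathcal{C}_{0,p}$. By Proposition \ref{spacelike} we already know that $\mathcal{C}_{S,p}$ lies in the de Sitter domain of $\mathcal{L}_{p,\alpha}$, so $-1<k_-\le k_+<1$, and that the curves $\mathcal{C}_\pm$ are $1$-Lipschitz. The plan is to conclude by the same area comparison used at the end of the proof of Proposition \ref{spacelike}, now to pin the link down to the equator.

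The key computation is that the equatorial disk strictly maximizes area among space-like cones through $p$. Indeed, in the flat tangent model $-dt^2+d\rho^2+\rho^2d\theta^2$ at $p$ (with $\theta\in[0,\alpha)$), a space-like cone $\{t=k(\theta)\rho\}$ over $D_r$ has area
\[
\frac{r^2}{2}\int_0^\alpha\sqrt{1-k(\theta)^2-k'(\theta)^2}\,d\theta\ \le\ \frac{\alpha}{2}\,r^2,
\]
with equality if and only if $k\equiv0$. Because $\mathcal{C}_\pm$ are $1$-Lipschitz, if $\mathcal{C}_{S,p}\neq\mathcal{C}_{0,p}$ then $k_+$ or $k_-$ is bounded away from $0$ on an interval of positive length. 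Comparing the induced area element of $S$ with that of the flat model near $p$, this should produce a definite deficit: there would exist $\delta>0$ and a sequence $r_j\to0$ with
\[
A_g\big(u(D_{r_j})\big)\ \le\ \frac{\alpha}{2}\,r_j^2\,(1-\delta)+o(r_j^2).
\]

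To contradict this, I would bound the area of $S$ from below using the maximality of the approximating surfaces of Proposition \ref{sequencemaxsurf}. The equatorial disk $\{t=0\}$ is totally geodesic, hence maximal (its second fundamental form vanishes, as computed for the surfaces $\mathcal{P}_{t_0}$ in the proof of Proposition \ref{c11}), while each $S_n$ is \emph{the} maximal surface of $(M,g_n)$ and so maximizes area among space-like Cauchy surfaces. Replacing $S_n$ inside $D_r$ by the equatorial disk, matched to $S_n$ along $\partial D_r$, yields an admissible space-like competitor, whence $A_{g_n}(S_n\cap D_r)\ge \tfrac{\alpha}{2}r^2-o(r^2)$, the matching contributing only a lower-order term. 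Letting $n\to\infty$ and using that $S_n\to S$ uniformly with $g_n\to g$ (together with the uniform space-likeness near $d$), the areas converge and one gets $A_g(u(D_r))\ge\tfrac{\alpha}{2}r^2\big(1-o(1)\big)$. Evaluating at $r=r_j$ contradicts the deficit above unless $\delta=0$; hence $k_+\equiv k_-\equiv0$, $\mathcal{C}_{S,p}=\mathcal{C}_{0,p}$, and $S$ is orthogonal to $d$.

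The hard part is the passage between the genuine area of $S$ and the link data $k_\pm$ when $u(\rho,\theta)/\rho$ oscillates, that is when $\mathcal{C}_+\neq\mathcal{C}_-$: a priori the area over $D_r$ involves $\nabla u$ along every radius, not only the extremal slopes, so one must rule out that the deficit created by the $\limsup$ along some radii is cancelled by near-equatorial behaviour along others. Here the $1$-Lipschitz property of $\mathcal{C}_\pm$ and the annulus maximum principle from the proof of Proposition \ref{spacelike} are essential: the former forces any departure from the equator to persist on a set of angles of positive measure, and the latter controls the radial oscillation of $u$, so the deficit cannot be washed out. The other delicate point is the justification of the area comparison across the singular locus, namely that grafting the equatorial disk produces an admissible competitor for the maximization defining $S_n$ and that areas pass to the limit as $g_n\to g$; this relies on the $\mathcal{C}^{1,1}$ barrier construction of Proposition \ref{c11} and on the uniform space-likeness established in the previous steps.
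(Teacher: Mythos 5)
Your reduction of orthogonality to ``the augmented link $\mathcal{C}_{S,p}$ coincides with the equator $\mathcal{C}_{0,p}$'' is correct, but your route is genuinely different from the paper's, and it has a gap at its quantitative core. The paper does not try to extract orthogonality from an area comparison: it blows up at $p$ by the zooms $\Z_\tau$, obtains in the limit a maximal space-like surface $V^*$ in the singular Minkowski space $\mathbb{R}^{2,1}_\theta$, observes that maximality makes the Gauss map $\mathcal{N}\colon V^*\to\mathbb{H}^2_\theta$ conformal (holomorphic for the reversed orientation), and then forces $\mathcal{N}(z)\to 0$ --- i.e.\ alignment of the normal with the particle --- from the requirement that the third fundamental form $\mathcal{N}^*h_\theta$ carry a cone point of angle $\theta$; the scale invariance of $\psi_\tau(0)$ transfers this back to $S$ itself.

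The gap is the lower bound $A_g\big(u(D_r)\big)\ge\frac{\alpha}{2}r^2\big(1-o(1)\big)$. To run the contradiction you must assume the link is genuinely off the equator; then along a sequence of radii $r_j$ and a set of angles of positive measure the surface sits at height comparable to $c\,r_j$ over $P_0$ for a definite $c>0$. To graft the equatorial disk into $S_n$ over $D_{r_j}$ while keeping the competitor space-like, the transition must be spread over an annulus whose width is a definite fraction of $r_j$ (its slope is at least height over width and must stay below $1$), so the flat part covers only $D_{\lambda r_j}$ with $\lambda$ bounded away from $1$, and the transition annulus carries a slope, hence an area deficit, bounded away from $0$. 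The competitor's area is therefore $\frac{\alpha}{2}r_j^2(1-\delta')$ with $\delta'>0$ of the \emph{same order} as the deficit $\delta$ you are trying to beat: the matching is a lower-order term precisely when $u=o(r)$ on $\partial D_r$, i.e.\ precisely when the orthogonality you want is already known. Closing the argument would require showing $\delta'<\delta$, a comparison you do not make and which is delicate because both quantities are governed by the same tilt $c$; this is presumably why the paper uses the area comparison only in Step 3, where the link touches the light cone and the deficit is macroscopic, and switches to the blow-up argument here. Two further points are asserted rather than proved: that $S_n$ maximizes area against the grafted competitor (maximality only gives a critical point; local maximality does hold in AdS by a calibration-type argument, but it must be invoked), and the upper bound $A_g\big(u(D_{r_j})\big)\le\frac{\alpha}{2}r_j^2(1-\delta)$ itself, which does follow from $u(0)=0$ together with the $1$-Lipschitz control on the link, but only after an argument you have not supplied.
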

The proof uses a ``zooming'' argument: by a limit of a sequence of homotheties and rescaling, we send a neighborhood of an intersection of the surface $S$ with a singular line to a piece of surface $V^*$ in the Minkowski space-time with cone singularity (defined below). Then we prove, using the Gauss map, that $V^*$ is orthogonal to the singular line and we show that it implies the result. 

\begin{proof}

For $\tau>0$, define $\AdS^3_{\theta,\tau}$ as the space $(t,\rho,\varphi)\in (-\pi/2,\pi/2)\times \mathbb{R}_{\geq 0}\times [0,\theta)$ with the metric
$$g_{\theta,\tau}=-dt^2+\cos^2(t/\tau)(d\rho^2+\tau^2\sinh^2(\rho/\tau)d\theta^2).$$
Define the ``zoom'' map
$$\begin{array}{lllll}
\Z_\tau & : & \AdS^3_\theta & \longrightarrow & \AdS^3_{\theta,\tau} \\
& & (\rho,\theta,t) & \longmapsto & (\tau \rho,\tau\theta,\tau t) \\
\end{array}$$
and the set
$$K_\tau:=(K,g_{\theta,\tau}),$$
where $K:=\big\{(t,\rho,\varphi)\in  (-\pi/2,\pi/2)\times [0,1) \times [0,\theta) \big\}$.

Let $p$ be the intersection of the surface $S\hookrightarrow (M,g)$ of Proposition \ref{limitsurf} with a singular line of angle $\theta$. By definition, there exists an isometry $\Psi$ sending a neighborhood of $p$ in $M$ to a neighborhood of $0:=(0,0,0)\in \AdS^3_\theta$. Denote by $U$ the image by $\Psi$ of the neighborhood of $p$ in $S$ and set $U_n:=\Z_n(U)\cap K \subset \AdS^3_{\theta,n}$ for $n\in \mathbb{N}$. Note that the $U_n$ are pieces of space-like surface in $\AdS^3_{\theta,n}$ with vanishing mean curvature.

For all $n\in\mathbb{N}$, let $f_n: [0,1]\times \mathbb{R}/\theta\mathbb{Z}\longrightarrow [-1,1]$ so that $U_n=\text{graph}(f_n)$. With respect to the metric $d\rho^2+\sinh^2\rho d\varphi^2$ on $[0,1]\times \mathbb{R}/\theta\mathbb{Z}$, the sequence $(f_n)_{n\in\mathbb{N}}$ is a sequence of uniformly bounded Lipschitz functions with uniformly bounded Lipschitz constant and so converges $\mathcal{C}^{0,1}$ to a function $f$.

\begin{lemma}\label{blowupsurface}
Outside its intersection with the singular line, the surface $V:=\text{graph}(f)\subset K$ is space-like and has everywhere vanishing mean curvature with respect to the metric 
$$\textbf{g}_{\theta}:=-dt^2+d\rho^2+\rho^2d\theta^2.$$
\end{lemma}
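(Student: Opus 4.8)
The plan is to establish Lemma \ref{blowupsurface} as a consequence of the convergence of the rescaled metrics $g_{\theta,n}$ together with the stability of the maximal surface equation under $\mathcal{C}^{0,1}$-limits, exactly as in the second step. First I would observe that as $\tau=n\to\infty$, the metric $g_{\theta,\tau}=-dt^2+\cos^2(t/\tau)\big(d\rho^2+\tau^2\sinh^2(\rho/\tau)d\theta^2\big)$ converges $\mathcal{C}^\infty$ on the compact set $K$ to the limiting metric $\textbf{g}_\theta=-dt^2+d\rho^2+\rho^2 d\theta^2$. Indeed, $\cos^2(t/\tau)\to 1$ uniformly with all derivatives on the bounded $t$-range, and $\tau^2\sinh^2(\rho/\tau)=\rho^2+O(\rho^4/\tau^2)\to\rho^2$ uniformly on $\rho\in[0,1]$ together with its derivatives. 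Note that $\textbf{g}_\theta$ is exactly the flat Minkowski metric with a conical singularity of angle $\theta$ along the line $\{\rho=0\}$, so the blow-up limit lives in singular Minkowski space as the name ``Minkowski space-time with cone singularity'' in the proof sketch indicates.

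Next I would record that each $U_n=\mathrm{graph}(f_n)$ is a space-like surface with vanishing mean curvature for the metric $g_{\theta,n}$: this is immediate because $U_n=\Z_n(U)\cap K$ and the zoom map $\Z_\tau:(\AdS^3_\theta,g_\theta)\to(\AdS^3_{\theta,\tau},g_{\theta,\tau})$ is a homothety (it scales the metric $g_\theta$ by $\tau^2$ up to the identification, hence preserves the property of being a zero-mean-curvature space-like graph, since maximality is invariant under scaling of the ambient metric). Then, exactly as in Proposition \ref{bootstrap}, away from the singular line $\{\rho=0\}$ each $f_n$ satisfies a uniformly elliptic quasi-linear maximal-surface equation of the form \eqref{max} associated to $g_{\theta,n}$, with coefficients depending on the metric; the uniform space-likeness of the $U_n$ (inherited from Proposition \ref{spacelike}, which guarantees the limit link stays strictly inside the de Sitter hemisphere so that $v_n=(1-\|\pi^*\nabla f_n\|^2)^{-1/2}$ stays uniformly bounded) makes the ellipticity uniform. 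I would then run the same bootstrap/Arzel\`a–Ascoli argument: from the uniform $\mathcal{C}^{0,1}$ bound one gets uniform bounds on $\partial_i(\nabla_i f_n)$ via the PDE, hence $\mathcal{C}^{0,1}$ convergence upgrades to $\W^{2,p}$ convergence on compacts of $K\setminus\{\rho=0\}$, so $f$ solves the limiting maximal-surface equation for $\textbf{g}_\theta$ and $V=\mathrm{graph}(f)$ has vanishing mean curvature there.

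There are two points requiring care. The first, and the main obstacle, is that the coefficients of the maximal-surface operators $\mathcal{L}_n$ for $g_{\theta,n}$ must be shown to converge to those of the operator $\mathcal{L}_\infty$ for $\textbf{g}_\theta$ uniformly on compacts of $K\setminus\{\rho=0\}$, and that the constant in the interior gradient estimate (controlling $\partial_i(\nabla_i f_n)$) can be taken uniform in $n$; this is exactly the delicate passage to the limit in the elliptic estimates, and it relies on the $\mathcal{C}^\infty$ convergence $g_{\theta,n}\to\textbf{g}_\theta$ established in the first step together with the uniform space-likeness so that the operators remain uniformly elliptic with uniformly bounded coefficients as required by Theorem \ref{schauder}. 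The second point is that the limit $f$ is a priori only $\mathcal{C}^{0,1}$ across $\{\rho=0\}$, so the conclusion on vanishing mean curvature is asserted only outside the intersection with the singular line, which is precisely what the statement claims; I would emphasize that the space-likeness of $V$ away from the singularity follows from the uniform lower bound on $1-\|\pi^*\nabla f\|^2$ passing to the limit, so $V$ is genuinely a space-like maximal surface in singular Minkowski space off the axis, which is what is needed to feed the subsequent Gauss-map argument establishing orthogonality.
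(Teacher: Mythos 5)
Your proposal is correct and follows essentially the same route as the paper: write the maximal surface equation \eqref{max} for each $f_n$ with respect to $g_{\theta,n}$, use the $\mathcal{C}^\infty$ convergence $g_{\theta,n}\to\textbf{g}_{\theta}$ on $K$ together with the uniform elliptic estimates of Proposition \ref{bootstrap} to upgrade $f_n\to f$ to $\mathcal{C}^{1,1}$, and pass to the limit in the equation plus a bootstrap. Your explicit justifications that $\Z_\tau$ is a homothety (so the $U_n$ are maximal for $g_{\theta,n}$) and that uniform space-likeness comes from the link argument of Proposition \ref{spacelike} are points the paper leaves implicit, but they do not change the argument.
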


\begin{proof}
As the surfaces $U_n\subset (K,g_{\theta,n})$ are space-like with everywhere vanishing mean curvature (outside the intersection with the singular line), they satisfy on $K\setminus\{0\}$ the following equation (see equation (\ref{max}), using the fact that $g_{ij}=0$ for $i\neq j$):
$$\frac{1}{\sqrt{\det g_{\theta,n}}}\partial_i(\sqrt{\det g_{\theta,n}}v_ng^{ii}_{\theta,n}\nabla_if_n)+\frac{1}{2}v_n\partial_tg^{ii}_{\theta,n}\vert \nabla_if_n\vert^2-\frac{1}{2}v_n^{-1}g^{ii}_{\theta,n}\partial_t(g_n)_{ii}=0.$$
Recall that here, $\det g_{\theta,n}$ is the determinant of the induced metric on $U_n\hookrightarrow (K,g_{\theta,n})$, $\nabla f_n$ is the gradient of $f_n$ and $v_n:=\big(1- \| \pi^* \nabla f_n\|^2\big)^{-1/2}$ for $\pi$ the orthogonal projection on $\{(t,\rho,\varphi)\in K,~t=0\}$.

As each $f_n$ satisfies the vanishing mean curvature equation, the same argument as in the proof of Proposition \ref{bootstrap} implies a uniform bound on the norm of the covariant derivative of the gradient of $f_n$. It follows that
$$f_n\overset{\mathcal{C}^{1,1}}{\longrightarrow}f.$$
Moreover, one easily checks that on $K$, $g_{\theta,n}\overset{\mathcal{C}^\infty}{\longrightarrow} \textit{\textbf{g}}_{\theta}$. In particular  $\det g_{\theta,n}$ and $v_n$ converge $\mathcal{C}^{1,1}$ to $\det \textit{\textbf{g}}_\theta$ and $v$ (respectively). It follows that $f$ is a weak solution of the vanishing mean curvature equation for the metric $\textit{\textbf{g}}_{\theta}$, and so, a bootstrap argument shows it is a strong solution. In particular, $V=\text{graph}(f)$ is a space-like surface in $(K,\textit{\textbf{g}}_{\theta})$ with everywhere vanishing mean curvature outside its intersection with the singular line.
\end{proof}

Consider on $\mathbb{R}^{2,1}$ the coordinates $(t,\rho,\varphi)\in \mathbb{R}\times \mathbb{R}_{>0}\times [0,2\pi)$ so that the metric $g$ of $\mathbb{R}^{2,1}$ writes
$$g=-dt^2+d\rho^2+\rho^2d\varphi^2.$$
The universal cover $\widetilde{\mathbb{R}^{2,1}\setminus d}$ of $\mathbb{R}^{2,1}\setminus d$ (where $d:=\{(t,\rho,\varphi)\in \mathbb{R}^{2,1},~\rho=0\}$ is the central axis) admits natural coordinates $(\widetilde{t},\widetilde{\rho},\widetilde{\varphi})\in \mathbb{R}\times \mathbb{R}_{>0}\times \mathbb{R}$. In these coordinates, the projection 
$$\pi: \widetilde{\mathbb{R}^{2,1}\setminus d} \longrightarrow \mathbb{R}^{2,1}\setminus d$$
maps $\widetilde{\varphi}$ to the unique $\varphi \in [0,2\pi)$ with $\widetilde{\varphi} \in \varphi + 2\pi\mathbb{Z}$.

Let's define 
$$r_\theta:\widetilde{\mathbb{R}^{2,1}\setminus d} \longrightarrow \widetilde{\mathbb{R}^{2,1}\setminus d}$$
by $r_\theta(\widetilde{t},\widetilde{\rho},\widetilde{\varphi})=(\widetilde{t},\widetilde{\rho},\widetilde{\varphi}+\theta)$. The quotient manifold $\mathbb{R}^{2,1}_\theta:=\widetilde{\mathbb{R}^{2,1}\setminus d} / \langle r_\theta \rangle$ inherits a singular Lorentz metric $g_\theta$ by pushing forward the metric $\pi^*g$ of $\widetilde{\mathbb{R}^{2,1}\setminus d}$. We call $\mathbb{R}^{2,1}_\theta$ with its metric the Minkowski space with cone singularity of angle $\theta$.

The manifold $(K,\textit{\textbf{g}}_\theta)$ of Lemma \ref{blowupsurface} with the central axis $l$ removed is canonically isometric to an open subset of $\mathbb{R}^{2,1}_\theta$. It follows that we can see the surface $V^*:=V\setminus\{l\cap V\}$ as a space-like surface embedded in $\mathbb{R}^{2,1}_\theta$.

Recall that the Gauss map of $V^*$ is the map associating to each point $x$ the unit future pointing vector normal to $V^*$.

\begin{lemma}
The Gauss map is naturally identified with a map $\mathcal{N}: V^* \longrightarrow \mathbb{H}^2_\theta$.
\end{lemma}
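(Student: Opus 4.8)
The plan is to develop the Gauss map on the universal cover and then push it down to the cone manifolds on both sides. Since $V^*$ is space-like and sits in the regular part $\mathbb{R}^{2,1}_\theta\setminus d$, at each point the future-pointing unit normal $N(x)$ is a future unit time-like vector of the tangent space; in any Minkowski chart the set of such vectors is the upper sheet of the unit hyperboloid, i.e. a copy of $\mathbb{H}^2$ (the hyperboloid model of the hyperbolic plane). The subtlety is purely global: the charts of $\mathbb{R}^{2,1}_\theta$ are glued around the axis $d$ by the rotation $R_\theta$ of angle $\theta$ about the time axis, so the naive target $\mathbb{H}^2$ must be replaced by a hyperbolic surface carrying this holonomy, which I claim is exactly $\mathbb{H}^2_\theta$.

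First I would pass to $\widetilde{\mathbb{R}^{2,1}\setminus d}$, on which the developing map $D(\widetilde{t},\widetilde{\rho},\widetilde{\varphi})=(\widetilde{t},\widetilde{\rho}\cos\widetilde{\varphi},\widetilde{\rho}\sin\widetilde{\varphi})$ is a local isometry onto $\mathbb{R}^{2,1}$ intertwining $r_\theta$ with $R_\theta$, that is $D\circ r_\theta=R_\theta\circ D$. Let $\widetilde{V^*}$ be the full preimage of $V^*$; it is simply connected, being the universal cover of the once-punctured disk $V^*$. Composing $D$ with the normal gives $\widetilde{\N}:=D_*N:\widetilde{V^*}\to\mathbb{H}^2$, and since $r_\theta$ is an isometry intertwined by $D$ one obtains the equivariance $\widetilde{\N}\circ r_\theta=R_\theta\circ\widetilde{\N}$. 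Here $R_\theta$ acts on $\mathbb{H}^2$ as the elliptic rotation of angle $\theta$ fixing the point $o:=(1,0,0)$, the direction of the axis $d$.

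Next I would realize $\mathbb{H}^2_\theta$ as the hyperbolic cone $\widetilde{\mathbb{H}^2\setminus\{o\}}/\langle\mathrm{rot}_\theta\rangle$, where $\mathrm{rot}_\theta$ adds $\theta$ to the angular coordinate and the developing map $\mathrm{dev}:\widetilde{\mathbb{H}^2\setminus\{o\}}\to\mathbb{H}^2$ satisfies $\mathrm{dev}\circ\mathrm{rot}_\theta=R_\theta\circ\mathrm{dev}$; thus $\mathbb{H}^2_\theta$ carries precisely the holonomy $R_\theta$ forced on the normals. As $\widetilde{V^*}$ is simply connected and (away from isolated points) $\widetilde{\N}$ avoids $o$, it lifts through the covering $\mathrm{dev}$ to a map $\widehat{\N}:\widetilde{V^*}\to\widetilde{\mathbb{H}^2\setminus\{o\}}$. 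Comparing the two lifts $\widehat{\N}\circ r_\theta$ and $\mathrm{rot}_\theta\circ\widehat{\N}$ of the same map shows they differ by a deck transformation $\widetilde{\psi}\mapsto\widetilde{\psi}+2\pi k$, and the equivariance $\widehat{\N}\circ r_\theta=\mathrm{rot}_\theta\circ\widehat{\N}$ holds once $k=0$. Passing to the quotients then yields the desired $\N:V^*\to\mathbb{H}^2_\theta$.

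The hard part is exactly this last verification: that the angular coordinate of $\widehat{\N}$ increases by \emph{exactly} $\theta$ (and not $\theta+2\pi k$) along a loop encircling the cone point $p=l\cap V$, i.e. that the Gauss map winds once around $o$ over such a loop, so that the cone angle of the image is genuinely $\theta$. I expect to settle this from the space-like condition: near $p$ the tangent plane is nearly horizontal, so the spatial part of the normal points essentially radially and its angular coordinate tracks that of the base point, forcing net increment $\theta$. Alternatively, since $k$ is an integer and hence a homotopy invariant, I would deform $V^*$ through space-like surfaces to the model space-like cone $\{t=c\rho\}$ with $c<1$, for which a direct computation gives $k=0$, and conclude by continuity. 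This pins the cone angle of the target as $\theta$ and completes the identification of the Gauss map with a map $\N:V^*\to\mathbb{H}^2_\theta$.
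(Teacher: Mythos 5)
Your overall strategy (develop onto genuine Minkowski space, map the normals into the hyperboloid $\mathbb{H}^2$, then lift through the universal cover of $\mathbb{H}^2\setminus\{o\}$ and quotient by $\mathrm{rot}_\theta$) is coherent, but it reduces the lemma to the claim you yourself isolate as ``the hard part'': that the deck discrepancy $k$ vanishes, i.e.\ that the Gauss map winds exactly once around $o$ over a loop encircling the cone point. Neither of your two proposed resolutions closes this. The first one (``near $p$ the tangent plane is nearly horizontal, so the spatial part of the normal points essentially radially'') is circular at this stage of the argument: near-horizontality of $V$ at the singular point is precisely orthogonality to the singular line, which is the conclusion of Proposition \ref{fourthstep} and is only derived \emph{after} this lemma, using the Gauss map constructed here. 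At this point all that is known is that $V^*$ is space-like away from the axis; the link of $V$ at $p$ could a priori be far from horizontal. The second resolution (deform through space-like surfaces to the model cone $\{t=c\rho\}$ and invoke homotopy invariance of $k$) is plausible but not carried out, and it inherits a further unaddressed issue: your lift $\widehat{\mathcal{N}}$ only exists where $\widetilde{\mathcal{N}}$ avoids $o$, and $\widetilde{V^*}$ minus the (possibly nonempty) set where the normal is vertical is no longer simply connected, so the existence of the lift itself requires an argument, and $k$ must be shown to be well defined and constant along the deformation.

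The paper avoids the winding-number question entirely by never leaving the covering space: for $p\in\widetilde{V^*}\subset\widetilde{\mathbb{R}^{2,1}\setminus d}$, the geodesic normal to $\widetilde{V^*}$ at $p$ either meets the lifted hyperboloid $\widetilde{\mathbb{H}^{2*}}=\{\widetilde{t}=\sqrt{\widetilde{\rho}^2+1}\}$ or is incomplete (it runs into the removed axis); the first alternative defines $\widetilde{\mathcal{N}}$ directly as a map into $\widetilde{\mathbb{H}^{2*}}$, with values in the cover rather than in $\mathbb{H}^2$. Equivariance under $r_\theta$ is then automatic from naturality of the normal geodesic flow under the isometry $r_\theta$ — there is no choice of lift and hence no integer $k$ to pin down — and the map descends to $\mathbb{H}^{2*}_\theta=\widetilde{\mathbb{H}^{2*}}/\langle r_\theta\rangle\cong\mathbb{H}^2_\theta\setminus\{0_\theta\}$. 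The exceptional set $\mathfrak{p}$ of incomplete normal geodesics (your ``isolated points'') is handled separately by setting $\mathcal{N}(\mathfrak{p})=0_\theta$ and using smoothness of $V^*$. To repair your write-up you would either need to adopt this intrinsic construction, or supply an actual computation of $k$ that uses only the space-like condition on $V^*$ (not orthogonality at the axis) together with a justification that the lift exists off the vertical-normal set.
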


\begin{proof}
Consider $\widetilde{V^*}\subset \widetilde{\mathbb{R}^{2,1}_\theta}$ the lifting of $V^*\subset \mathbb{R}^{2,1}_\theta$. As $\widetilde{V^*}$ is space-like, for each point $p\in \widetilde{V^*}$, the geodesic orthogonal to $\widetilde{V^*}$ passing through $p$ either intersects the space-like surface $\widetilde{\mathbb{H}^{2*}}:=\{(\widetilde{t},\widetilde{\rho},\widetilde{\varphi})\in \widetilde{\mathbb{R}^{2,1}_\theta},~\widetilde{t}=\sqrt{\widetilde{\rho}^2+1}\}$ (which is a lifting of the hyperboloid $\mathbb{H}^2\subset \mathbb{R}^{2,1}$ with the point $(1,0,0)$ removed) or is not complete (namely, the geodesic hits the boundary curve $\{\widetilde{\rho}=0\}$).

Denote by $\widetilde{\frak{p}}\subset \widetilde{V^*}$ the set of points so that the orthogonal geodesic is not complete. The Gauss map is thus canonically identified with a map
$$\widetilde{\mathcal{N}}: \widetilde{V*}\setminus \widetilde{\frak{p}} \longrightarrow \widetilde{\mathbb{H}^{2*}}.$$
It is clear that $\widetilde{\mathcal{N}}$ is equivariant with respect to the action of $r_\theta$ so descends to a map
$$\mathcal{N}: V^*\setminus \frak{p} \longrightarrow \mathbb{H}^{2*}_\theta:= \widetilde{\mathbb{H}^{2*}} / \langle r_\theta \rangle,$$
where $\frak{p}=\widetilde{\frak{p}}/ \langle r_\theta \rangle$. Note that $\mathbb{H}^{2*}_\theta$ is isometric to the hyperbolic disk with cone singularity (defined in the Introduction) with the center $0_\theta$ removed.

As $V^*$ is smooth, setting $\mathcal{N}(\frak{p})=0_\theta$ gives a smooth extension of $\mathcal{N}$ to a map $\mathcal{N}: V^* \longrightarrow \mathbb{H}^2_\theta.$
\end{proof}

\begin{lemma}
The Gauss map $\mathcal{N}: V^* \longrightarrow \mathbb{H}^2_\theta$ is holomorphic with respect to the complex structure associated to the reverse orientation of $\mathbb{H}^2_\theta$.
\end{lemma}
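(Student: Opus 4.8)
The plan is to prove that $\mathcal{N}$ is a weakly conformal map which \emph{reverses} orientation; holomorphicity with respect to the complex structure associated to the reverse orientation of $\mathbb{H}^2_\theta$ is then automatic. I would work first on $V^*\setminus\fkp$, where by the preceding lemmas $V^*$ is smooth, space-like, and has vanishing mean curvature, and treat the cone point at the very end. The starting observation is to identify the differential of the Gauss map with the shape operator: since $V^*$ is space-like in $\mathbb{R}^{2,1}_\theta$, for each $p$ the tangent plane $T_{\mathcal{N}(p)}\mathbb{H}^2_\theta$ is the orthogonal complement $\mathcal{N}(p)^\perp$, which coincides with $T_pV^*$. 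Under this canonical identification $d\mathcal{N}_p=-B_p$, where $B\,u=-\nabla_u\mathcal{N}$ is the shape operator, so that the pullback of the hyperbolic metric is exactly the third fundamental form, $\mathcal{N}^*g_{\mathbb{H}^2_\theta}(X,Y)=\I(BX,BY)=\III(X,Y)$.

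Next I would exploit maximality. Vanishing mean curvature means $\mathrm{tr}_\I(B)=0$, so Cayley--Hamilton gives $B^2=-\det_\I(B)\,\mathrm{Id}$. As $B$ is $\I$-self-adjoint, this yields
\[
\III(X,Y)=\I(X,B^2Y)=-\det_\I(B)\,\I(X,Y),
\]
so $\mathcal{N}^*g_{\mathbb{H}^2_\theta}=-\det_\I(B)\,\I$ is pointwise conformal to $\I$; note that $\det_\I(B)\le 0$ because $B$ is trace-free, so the conformal factor is nonnegative and vanishes only at isolated points. Hence $\mathcal{N}$ is weakly conformal, with branch points exactly at the zeros of $\det_\I(B)$, and is therefore holomorphic or anti-holomorphic for the complex structures induced by $\I$ and by $g_{\mathbb{H}^2_\theta}$ once an orientation on the target is fixed.

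To pin down the orientation I would compute the Jacobian of $\mathcal{N}$ with respect to the metric area forms: $\det(d\mathcal{N})=\det(-B)=\det_\I(B)\le 0$, strictly negative away from the branch points. Thus $\mathcal{N}$ reverses orientation, so it is anti-holomorphic for the standard orientation of $\mathbb{H}^2_\theta$, equivalently holomorphic for the complex structure associated to the reverse orientation. It remains to pass the cone point: the identity above and the orientation statement hold on $V^*\setminus\fkp$, while by the previous lemma $\mathcal{N}$ extends smoothly across $\fkp$ with value near $0_\theta\in\mathbb{H}^2_\theta$. Since a continuous map that is holomorphic off a single point is holomorphic everywhere, Riemann's removable-singularity theorem upgrades holomorphicity on $V^*\setminus\fkp$ to holomorphicity on all of $V^*$.

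I expect two delicate points. The decisive conceptual issue is the orientation: the conformality $\III=-\det_\I(B)\,\I$ only shows $\mathcal{N}$ is $\pm$holomorphic, and one must track the sign of the Jacobian (equivalently, the fact that a symmetric trace-free $B$ \emph{anti}-commutes with the complex structure $J$ of $\I$) to see that the correct statement is holomorphicity for the \emph{reverse} orientation on $\mathbb{H}^2_\theta$. The second subtlety is the behavior at the cone point $\fkp$, where both the induced complex structure and the conformal factor degenerate; here the smooth extension furnished by the previous lemma, combined with the removable-singularity argument, is exactly what guarantees global holomorphicity.
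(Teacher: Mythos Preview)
Your proposal is correct and follows essentially the same approach as the paper: both arguments show that $\mathcal{N}^*h_\theta=\III$ is conformal to $\I$ using the trace-free self-adjointness of $B$ (the paper via diagonalization $B=\mathrm{diag}(k,-k)$, you via Cayley--Hamilton), and both conclude orientation reversal from $\det B=-k^2\le 0$. Your treatment of the set $\fkp$ via removable singularities is more careful than necessary---the previous lemma already gives a \emph{smooth} extension of $\mathcal{N}$ across $\fkp$, so $d\mathcal{N}=-B$ holds there by continuity and the conformality argument applies directly---but this extra care does no harm.
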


\begin{proof}
As $V^*$ has everywhere vanishing mean curvature, we can choose an orthonormal framing of $TV^*$ such that the shape operator $B$ of $V^*$ expresses
$$B=\begin{pmatrix}
k & 0 \\
0 & -k
\end{pmatrix}.$$
Denoting $h_\theta$ the metric of $\mathbb{H}^2_\theta$, we obtain that
$$\mathcal{N}^*h_\theta=\I(B.,B.)=k^2\I(.,.),$$
where $\I$ is the first fundamental form of $V^*$. That is $\mathcal{N}$ is conformal and reverses the orientation and so is holomorphic with respect to the holomorphic structure defined by the opposite orientation of $\mathbb{H}^2_\theta$.
\end{proof}

\begin{lemma}
The piece of surface $V^* \hookrightarrow \mathbb{R}^{2,1}_\theta$ is orthogonal to the singular line.
\end{lemma}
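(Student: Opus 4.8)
The plan is to read off orthogonality from the limit of the Gauss map at the apex. Write $V^*$ near its intersection $0:=l\cap\overline{V^*}$ with the singular line $l$ as a graph $t=f(\rho,\varphi)$ over the totally geodesic plane $P:=\{t=0\}\subset\mathbb{R}^{2,1}_\theta$ orthogonal to $l$, so that $f(0)=0$. In this flat model $d(y,P)=|f(y)|$, while $d_S(0,y)$ is comparable to $\rho$, so the defining condition $d(y,P)/d_S(0,y)\to 0$ is equivalent to $f=o(\rho)$, and this follows once $\nabla f\to 0$ at the apex. As the future unit normal of the graph has the form $v\,(1,\nabla f)$, with spatial part governed by $\nabla f$, the vanishing of $\nabla f$ at $0$ is exactly the statement that the Gauss map tends to the cone point $0_\theta$. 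Everything thus reduces to proving that $\mathcal{N}(x)\to 0_\theta$ as $x\to 0$ along $V^*$.

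First I would extend $\mathcal{N}$ continuously across the apex. A punctured neighbourhood of $0$ in $V^*$ is conformally a punctured disk, and by the previous lemma $\mathcal{N}$ is holomorphic from it into $\mathbb{H}^2_\theta$. Since $\theta<\pi<2\pi$, the cone surface $\mathbb{H}^2_\theta$ is conformally a disk, so $\mathcal{N}$ is a bounded holomorphic map and its singularity at the puncture is removable; hence $\mathcal{N}$ extends continuously to $0$ with some value $w_0\in\mathbb{H}^2_\theta$, i.e. the unit normal of $V^*$ acquires a genuine limit there.

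Next I would identify $w_0=0_\theta$ using the rotational holonomy of the cone. Lifting to the universal cover, $\widetilde{\mathcal{N}}$ is equivariant for $r_\theta$, which acts on $\widetilde{\mathbb{H}^{2*}}$ as the elliptic rotation by angle $\theta$ whose only fixed point is the centre $\widetilde 0$. The continuous boundary value, once lifted, takes its values in the countable $r_\theta$-orbit of a single point of $\widetilde{\mathbb{H}^{2*}}$, hence is constant by connectedness; equivariance then forces this constant to be $r_\theta$-fixed, i.e. the centre $\widetilde 0$, so that $w_0=0_\theta$. Combining the two steps gives $\mathcal{N}\to 0_\theta$, whence $\nabla f\to 0$ and $|f(\rho,\varphi)|\le\int_0^\rho|\partial_s f|\,ds=o(\rho)$ uniformly in $\varphi$; together with $d_S(0,y)\ge(1-o(1))\rho$ this yields $d(y,P)/d_S(0,y)\to 0$, which is precisely orthogonality.

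The main obstacle is the identification step: one must exclude that the normal limits onto a non-degenerate loop around $0_\theta$, which is exactly the Gauss image of a genuinely tilted tangent cone. This is ruled out by combining the removable-singularity extension (so that a limit $w_0$ exists at all) with the holonomy constraint (so that $w_0$ is pinned to the centre). As a cross-check one can run the variational argument of Proposition \ref{spacelike}: a non-horizontal spacelike link has length strictly smaller than that of $\mathcal{C}_{0,p}$, so flattening $V^*$ near the apex would strictly increase its area, which is impossible for a blow-up limit of maximal surfaces.
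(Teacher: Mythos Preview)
Your main argument is correct and follows a genuinely different route from the paper's. Both exploit the holomorphicity of $\mathcal{N}$ from the preceding lemma and work in a conformal coordinate $z$ on $V^*$ near the apex. The paper writes $\mathcal{N}(z)\sim\lambda z^n$ and computes $\mathcal{N}^*h_\theta\sim |z|^{2(n\theta/2\pi-1)}|dz|^2$; it then asserts that this third fundamental form ``has to carry a conical singularity of angle $\theta$'', which forces $n=1$ and hence $\mathcal{N}(z)\to 0$. You instead combine the removable-singularity theorem (to obtain a continuous value $w_0\in\mathbb{H}^2_\theta$) with the $r_\theta$-equivariance of the lifted Gauss map: if $w_0\neq 0_\theta$, small loops around the apex would map to nullhomotopic loops near $w_0$ in $\mathbb{H}^{2*}_\theta$, while equivariance forces their lifts to connect distinct points of an $r_\theta$-orbit, a contradiction since $r_\theta$ has no fixed point there. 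In effect your monodromy argument supplies precisely what the paper leaves unjustified, since the only way to know \emph{a priori} that $\mathcal{N}^*h_\theta$ has cone angle exactly $\theta$ at the apex of $V^*$ is to know that $\mathcal{N}$ wraps once around $0_\theta$ there --- which is exactly the equivariance statement. The paper's exponent computation does buy the marginally sharper information that $\mathcal{N}$ vanishes to exactly first order.

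Your closing variational cross-check, however, is not airtight: the flat disk over the base $D_R$ does not match $V^*$ along $\partial D_R$, and any spacelike interpolation across a collar may cost area of the same order as the putative gain (the link being strictly spacelike here, there is no definite gap as in Proposition~\ref{spacelike}). This does not affect the validity of your principal argument.
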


\begin{proof}
Fix complex coordinates $z: V^* \longrightarrow \mathbb{D}^*$ and $w: \mathbb{H}^2_\theta \longrightarrow \mathbb{D}^*$. In these coordinates systems, the metric $g_V$ and $h_\theta$ of $V^*$ and $\mathbb{H}^2_\theta$ respectively express:
$$g_V= \rho^2(z)\vert dz\vert^2,~~ h_\theta= \sigma^2(w)\vert dw\vert^2.$$
Note that, as $\mathbb{H}^2_\theta$ carries a conical singularity of angle $\theta$ at the center, $\sigma^2(w)=e^{2u}\vert w\vert^{2(\theta/2\pi-1)}$, where $u$ is a bounded function.

Assuming $\mathcal{N}$ does not have an essential singularity at $0$, the expression of $\mathcal{N}$ in the complex charts has the form:
$$\mathcal{N}(z)= \frac{\lambda}{z^n} + f(z),\text{ where }z^nf(z)\underset{z\to 0}{\longrightarrow} 0$$
for some $n\in \mathbb{Z}$ and non-zero $\lambda$.

Denote by $e(\mathcal{N})=\frac{1}{2}\Vert d\mathcal{N}\Vert^2$ the energy density of $\mathcal{N}$. The third fundamental form of $V^*$ is thus given by
$$\mathcal{N}^*h_\theta= e(\mathcal{N})g_U.$$
Moreover, we have:
$$e(\mathcal{N})= \rho^{-2}(z)\sigma^2(\mathcal{N}(z)) \vert \partial_z \mathcal{N}\vert^2.$$
If $n\neq 0$, we have
$$\vert \partial_z \mathcal{N}\vert^2 = C \vert z\vert^{2(n-1)}+ o\left(\vert z\vert^{2(n-1)}\right),\text{ for some }C>0,$$
and 
$$\sigma^2(\mathcal{N}(z))=e^{2v}\vert z\vert^{2n(\theta/2\pi-1)},\text{ for some bounded } v.$$
So we finally get,
$$ \mathcal{N}^* h_\theta=e^{2\varphi}\vert z\vert^{2(n\theta/2\pi -1)} \vert dz\vert^2,\text{ where }\varphi \text{ is bounded.}$$
For $n=0$, the same computation gives
$$\mathcal{N}^* h_\theta= e^{2\varphi} \vert dz\vert^2,\text{ where }\varphi \text{ is bounded from above.}$$
For $\mathcal{N}$ having an essential singularity, we get that for all $n<0,~\vert z\vert^n=o\left(\rho^2(z)e(\mathcal{N})\right)$ and so $\mathcal{N}^*h_\theta$ cannot have a conical singularity.

However, as the third fundamental form is the pull-back by the Gauss map of $h_\theta$, it has to carry a conical singularity of angle $\theta$, that is have the expression
$$\III=e^{2\Psi}\vert z\vert^{2(\theta/2\pi-1)} \vert dz\vert^2,$$
for some bounded $\Psi$. It implies in particular that $n=1$, and so $\mathcal{N}(z)\underset{z\to 0}{\longrightarrow} 0$, which means that $V^*$ is orthogonal to the singular line.

\end{proof}

The proof of Proposition \ref{fourthstep} follows. For $\tau\in \mathbb{R}_{>0}$, let $u_\tau\in T_0 \AdS^3_{\theta,\tau}$ be the unit future pointing vector tangent to $d$ at $0=U_\tau\cap d$. For $x\in U_\tau$ close enough to $0$, let $u_\tau(x)$ be the parallel transport of $u_\tau$ along the unique geodesic in $U_\tau$ joining $0$ to $x$. Denoting by $\N_\tau$ the Gauss map of $U_\tau$, we define a map:
$$\psi_\tau(x):=g_{\theta,\tau}(u_\tau(x),\N_\tau(x)),$$
where $g_{\theta,\tau}$ is the metric of $\AdS^3_{\theta,\tau}$. Note that, by construction, the value of $\psi_\tau(0)$ is constant for all $\tau\in\mathbb{R}_{>0}$. As $U_\infty$ is orthogonal to $d$, $\underset{\tau\to\infty}{\lim} \psi_\tau(0)=-1$ so in particular $\psi_1(0)=-1$, that is the surface $S$ is orthogonal to the singular lines.
\end{proof}

\section{Uniqueness}

In this section, we prove the uniqueness part of Theorem \ref{surf}:

\begin{prop}\label{uniq}
The maximal surface $S\hookrightarrow (M,g)$ of Proposition \ref{existence} is unique.
\end{prop}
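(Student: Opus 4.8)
The plan is to prove uniqueness by translating the geometric problem into an analytic statement about the equations governing maximal surfaces, and then to establish a suitable maximum principle. Recall from Section~\ref{adsparticles} that a maximal surface in $(M,g)$ corresponds to a pair $(g_S,\II)$ satisfying $\mathrm{tr}_{g_S}(\II)=0$, $\delta_{g_S}\II=0$, and the modified Gauss equation $K_{g_S}=-1-\det_{g_S}(\II)$. By Hopf's theorem and Proposition~\ref{simplepoles}, the traceless Codazzi tensor $\II$ is the real part of a meromorphic quadratic differential $q$ with at most simple poles at the punctures, holomorphic with respect to the complex structure $J$ of the induced metric. Thus a maximal surface is entirely encoded by a conformal structure $\frak{c}$ together with such a $q$, subject to the modified Gauss equation, and uniqueness of the surface in $(M,g)$ amounts to uniqueness of this data compatible with the fixed left/right metrics $\Mess_\theta(g)=(g_1,g_2)$.

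First I would fix the two hyperbolic metrics with cone singularities $(g_1,g_2)$ determined by $g$ and recall the Krasnov--Schlenker formula $g_{1,2}(x,y)=\I\big((E\pm JB)x,(E\pm JB)y\big)$, which expresses $g_1$ and $g_2$ in terms of the induced metric $\I$ and the shape operator $B$ of the maximal surface (this extends to the setting with particles since, by Proposition~\ref{simplepoles}, $\det_{g_S}(B)\to 0$ at the singularities and the principal curvatures lie in $(-1,1)$). Suppose there were two maximal surfaces $S$ and $S'$ in $(M,g)$, giving data $(\I,B)$ and $(\I',B')$. Since both are compatible with the same $(g_1,g_2)$, the goal is to show $\I=\I'$ and $B=B'$. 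I would phrase this as an equation for the conformal factor: writing $\I=e^{2u}|dz|^2$ and $\I'=e^{2u'}|dz|^2$ in a common complex coordinate (after first establishing that both surfaces induce the same conformal class), the difference $w:=u-u'$ satisfies a semilinear elliptic equation coming from subtracting the two modified Gauss equations, of the form $\Delta w = F(u)-F(u')$ where $F$ is monotone increasing, forcing $w\equiv 0$ by the maximum principle.

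The serious work, and the step I expect to be the main obstacle, is justifying the maximum principle at the conical singularities. Away from the punctures this is the classical Schoen/Barbot--B\'eguin--Zeghib argument, but near each cone point the relevant metrics and the quadratic differential degenerate: $\I$ behaves like $|z|^{2(\theta/2\pi-1)}$ and $q$ may have a simple pole, so one must control the asymptotics of $w$, $\nabla w$, and $\Delta w$ carefully enough to rule out a positive interior maximum accumulating at a singularity. I would handle this using the precise local model established in Proposition~\ref{simplepoles}, together with the boundedness of the relevant conformal factors, to show that $w$ extends continuously across each puncture and that no boundary maximum can occur there; a removable-singularity argument for the elliptic inequality satisfied by $w$ (in the weighted H\"older spaces natural to conical metrics, as in the references \cite{mazzeo1}, \cite{mazzeo2}) would then close the argument. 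An alternative, and perhaps cleaner, route is to argue variationally: maximal surfaces are the critical points of the area functional, and one can show that the second variation is strictly negative in the space of nearby space-like surfaces with principal curvatures in $(-1,1)$, so that a maximal surface is a strict local maximum; combined with a connectedness or convexity argument on the space of such surfaces (exploiting the fact that $S$ lies in the convex core), this rigidity forces uniqueness. Either way the crux is the same: transferring the classical elliptic rigidity across the conical singularities, where the hypothesis $\theta_i<\pi$ enters decisively through the integrability of the curvature and the simplicity of the poles.
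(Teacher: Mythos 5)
There is a genuine gap in your reduction, and it occurs before any of the elliptic analysis you discuss. You propose to encode each maximal surface by a conformal class $\frak{c}$ and a holomorphic quadratic differential $q$ with simple poles, and then to prove uniqueness by subtracting two modified Gauss equations and applying a maximum principle to the difference of conformal factors. But that maximum-principle argument only proves uniqueness of the metric $\mathrm{g}_0\in\frak{c}$ satisfying $K_{\mathrm{g}_0}=-1-\det_{\mathrm{g}_0}(h)$ \emph{for a fixed conformal class and a fixed $q$} --- that is, it re-proves the parametrization of $\HH$ by $T^*\T$, which the paper already takes from \cite{krasnov2007minimal}. What Proposition \ref{uniq} actually requires is that two \emph{a priori distinct} pairs $(\frak{c},q)$ and $(\frak{c}',q')$ cannot give rise to the same AdS manifold, i.e.\ the same pair $(g_1,g_2)$ under the Krasnov--Schlenker formula. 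Your parenthetical ``after first establishing that both surfaces induce the same conformal class'' conceals essentially the entire content of the statement: if you already knew $\frak{c}=\frak{c}'$ and $q=q'$, there would be nothing left to prove. Nothing in your outline supplies that step, and the semilinear equation for $u-u'$ is not monotone (indeed is not even a single scalar equation) when the underlying complex structures and quadratic differentials differ. Your alternative variational route has the same problem: strict negativity of the second variation gives local rigidity of each critical point but does not exclude several critical points, and the ``connectedness or convexity argument'' you invoke is precisely the missing ingredient.

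The paper's proof proceeds along entirely different, Lorentzian-geometric lines, and it is worth seeing why that works where the conformal-factor approach stalls. Assuming two maximal surfaces $S_1$ and $S_2$, one considers the supremum $C$ of the causal length $l_\gamma(S_1,S_2)$ over time-like geodesics $\gamma$; since both surfaces lie in the convex core, $C<\pi/2$, and a compactness argument in the universal cover produces a maximizing geodesic $\gamma$, necessarily orthogonal to both surfaces. At the two feet of $\gamma$ one compares the principal curvatures: because both surfaces are maximal, their principal curvatures are $\pm k_i$, and deforming $\gamma$ along a Jacobi field in a principal direction of the larger curvature shows $l_{\gamma_\epsilon}(S_1,S_2)\geq l_{\gamma_\epsilon}(P_1,P_2)$ for the tangent totally geodesic planes $P_i$; the strict future-convexity of equidistant surfaces from a totally geodesic plane (established in the proof of Proposition \ref{c11}) then forces $l_{\gamma_\epsilon}(P_1,P_2)>C$, contradicting maximality of $C$. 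This is the Lorentzian analogue of the classical comparison of two minimal graphs via the distance function between them, and it bypasses entirely the need to compare conformal structures or Hopf differentials. If you want to salvage an analytic proof, the quantity to which a maximum principle should be applied is this causal separation between the two surfaces, not the difference of their conformal factors.
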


\begin{proof}

Given a causal curve intersecting two space-like surfaces $S_1$ and $S_2$, we denote by $l_\gamma (S_1,S_2)$ the causal length of $\gamma$ between $S_1$ and $S_2$. Namely,
$$l_\gamma (S_1,S_2)=\int_{t_1}^{t_2} \big(-g(\gamma'(t),\gamma'(t))\big)^{1/2}dt,$$
where $\gamma(t_i)\in S_i,~i=1,2$.

Suppose that there exist two different maximal surfaces $S_1 \text{ and } S_2$ in $(M,g)$ where $S_1$ is the one of Proposition \ref{existence}. Denote by $\Gamma$ the set of time-like geodesics in $M$ and set
$$C:=\sup_{\gamma \in \Gamma} \{l_\gamma(S_1,S_2)\}>0.$$

Note that, from \cite[Lemma 5.7]{bonsante2009ads}, as $S_1\hookrightarrow (M,g)$ is contained in the convex core, $C<\pi/2$. Consider $(\gamma_n)_{n\in \mathbb{N}}\subset\Gamma$ such that 
$$\underset{n\to\infty}{\lim} l(\gamma_n)=C.$$

\begin{lemma}
The sequence of geodesic segments $(\gamma_n)_{n\in\mathbb{N}}$ converges to $\gamma\in\Gamma$.
\end{lemma}
\begin{proof}
For $i=1,2$, denote by $x_{in}$ the intersection of $\gamma_n$ and $S_i$.

For $n\in\mathbb{N}$, choose a lifting $\widetilde{x_1}_n$ of $x_{1n}$ in the universal cover $\widetilde{M}$ of $M$. This choice fixes a lifting of the whole sequence $(x_{1n})_{n\in\mathbb{N}}$ and of  $(\gamma_n)_{n\in\mathbb{N}}$, so of $(x_{2n})_{n\in\mathbb{N}}$.  Note that the sequence $(\widetilde{x_1}_n)_{n\in\mathbb{N}}$ converges to $\widetilde{x_1}\in \widetilde{S_1} \subset \widetilde{M}$ and, as the causal cone of $\widetilde{x_1}$ intersects $\widetilde{S}_2$ in a compact set containing infinitely many $\widetilde{x_2}_n$, the sequence $(\widetilde{x_2}_n)_{n\in\mathbb{N}}$ converges to $\widetilde{x}_2$ (up to a subsequence).

It follows that $\widetilde{x_2}$ projects to $x_2 \in S_2$ and $C=l_\gamma(S_1,S_2)$ where $\gamma$ is the projection of the unique time-like geodesic joining $\widetilde{x}_1$ to $\widetilde{x}_2$.
\end{proof}

It is clear that $S_1$ and $S_2$ are orthogonal to $\gamma$ (if not, there would exist some deformation of $\gamma$ increasing the causal length at the first order).

For $i=1,2$, denote by $x_i$ the intersection of $\gamma$ and $S_i$, by $P_i$ the (locally defined) totally geodesic plane tangent to $S_i$ at $x_i$ and by $\pm k_i$ the principal curvatures of $S_i$ at $x_i$. We can assume moreover that $k_1\geq k_2 \geq 0$ and that $x_2$ is in the future of $x_1$.

Let $u\in \mathcal{U}_{x_1} S_1$ (where $\mathcal{U}_x S_1$ is the unit tangent bundle to $S_1$) be a principal direction associated to $k_1$. For $\epsilon>0$, denote by $J$ the Jacobi field along $\gamma$ so that 
$$\left\{ \begin{array}{lll}
J(0) & = & \epsilon u \\
J'(0) & = & 0
\end{array}\right.$$
and set $\gamma_\epsilon:= \exp(J) \in \Gamma$ the deformation of $\gamma$ along $J$. It is clear that $\gamma_\epsilon$ is orthogonal to the piece of totally geodesic plane $P_1$.

By definition of the curvature, we have
$$l_{\gamma_\epsilon}(S_1,S_2) =  l_{\gamma_\epsilon}(P_1,P_2) + (k_1-\kappa_2)\epsilon^2 +o(\epsilon^2).$$
Here $\kappa_2$ is the curvature of $S_2$ at $x_2$ is the direction $p_\gamma(u)$ where $p_\gamma : T_{x_1} S_1 \longrightarrow T_{x_2} S_2$ is the parallel transport along $\gamma$. In particular we have $-\kappa_2\geq -k_2$ and so
$$l_{\gamma_\epsilon}(S_1,S_2) \geq  l_{\gamma_\epsilon}(P_1,P_2).$$

Moreover, in the proof of Proposition \ref{c11} we proved that the equidistant surface in the future of a totally geodesic space-like plane in $\AdS^3_\theta$ is strictly future-convex (when the distance is less than $\pi/2$). It follows that
$$ l_{\gamma_\epsilon}(P_1,P_2)> C,$$
and we get a contradiction.
\end{proof}

\section{Consequences}

\subsection{Minimal Lagrangian diffeomorphisms}

In this paragraph, we prove Theorem \ref{min}. Let $\Sigma$ be a closed oriented surface endowed with a Riemannian metric $g$ and let $\nabla$ be the associated Levi-Civita connection.

\begin{Def}
A bundle morphism $b: T\Sigma\longrightarrow T\Sigma$ is \textbf{Codazzi} if $d^\nabla b=0$, where $d^\nabla$ is the covariant derivative of vector valued form associated to the connection $\nabla$.
\end{Def}

We recall a result of \cite{labourie1992surfaces}:
\begin{theo}[Labourie]\label{labourie}
Let $b:T\Sigma\longrightarrow T\Sigma$ be a everywhere invertible Codazzi bundle morphism, and let $h$ be the symmetric $2$-tensor defined by $h=g(b.,b.)$. The Levi-Civita connection $\nabla^h$ of $h$ satisfies
$$\nabla^h_uv=b^{-1}\nabla_u(bv),$$
and its curvature is given by:
$$K_h=\frac{K_g}{\det(b)}.$$
\end{theo}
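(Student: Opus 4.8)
The plan is to prove this Labourie-type statement by direct computation, verifying the two formulas in turn. First I would establish that the stated connection $\nabla^h_u v := b^{-1}\nabla_u(bv)$ is indeed the Levi-Civita connection of $h = g(b\cdot, b\cdot)$. By uniqueness of the Levi-Civita connection, it suffices to check two properties: that $\nabla^h$ is torsion-free and that it is compatible with the metric $h$. For metric compatibility, I would compute $u\cdot h(v,w)$ and expand it using the definition of $h$ together with the fact that $\nabla$ is compatible with $g$, checking that the result equals $h(\nabla^h_u v, w) + h(v, \nabla^h_u w)$; this reduces to a tautology once the definition of $\nabla^h$ is substituted. For torsion-freeness, I would compute $\nabla^h_u v - \nabla^h_v u - [u,v]$ and expand using the definition of $\nabla^h$, obtaining
$$b^{-1}\bigl(\nabla_u(bv) - \nabla_v(bu)\bigr) - [u,v].$$
This is where the Codazzi condition enters: the hypothesis $d^\nabla b = 0$ is precisely the statement that $\nabla_u(bv) - \nabla_v(bu) - b[u,v] = 0$, which makes the torsion vanish. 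I would make explicit that $(d^\nabla b)(u,v) = \nabla_u(bv) - \nabla_v(bu) - b(\nabla_u v - \nabla_v u)$ and invoke the torsion-freeness of $\nabla$ to replace $\nabla_u v - \nabla_v u$ by $[u,v]$.

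For the curvature formula, I would start from the definition of the Riemann curvature tensor of $\nabla^h$, namely $R^h(u,v)w = \nabla^h_u \nabla^h_v w - \nabla^h_v \nabla^h_u w - \nabla^h_{[u,v]} w$. Substituting the formula for $\nabla^h$ and using $\nabla^h_v w = b^{-1}\nabla_v(bw)$, the nested covariant derivatives telescope through the $b^{-1}$ and $b$ factors, yielding $R^h(u,v)w = b^{-1} R(u,v)(bw)$, where $R$ is the curvature of $\nabla$. I would then pass to sectional curvature: in dimension two, $K_h = h(R^h(u,v)v, u)/(h(u,u)h(v,v) - h(u,v)^2)$ for an arbitrary frame. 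Rewriting the numerator $h(R^h(u,v)v,u) = g(b\,R^h(u,v)v, bu) = g(R(u,v)(bv), bu)$ and relating the denominator to the $g$-area via $\det(b)$, the two determinant factors combine so that $K_h = K_g/\det(b)$.

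The main obstacle I anticipate is the bookkeeping in the curvature computation: ensuring the $b$ and $b^{-1}$ factors cancel cleanly requires care, since $b$ does not commute with $\nabla$, and one must confirm that all the extra terms produced by differentiating $b$ either cancel among themselves or are absorbed by the Codazzi condition. In particular, the clean identity $R^h(u,v)w = b^{-1}R(u,v)(bw)$ holds precisely because $\nabla^h$ is conjugate to $\nabla$ by $b$ as connections, but verifying this conjugation at the level of the full curvature tensor — rather than just asserting it — is the delicate part. Once that identity is secured, the reduction to the scalar relation between Gauss curvatures via the area-distortion factor $\det(b)$ is routine two-dimensional linear algebra.
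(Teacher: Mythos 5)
Your proposal is correct. Note, though, that the paper offers no proof of this statement at all: it is quoted verbatim as a result of Labourie (\cite{labourie1992surfaces}), so there is nothing internal to compare against, and your direct computation is a legitimate self-contained verification. All the steps check out: metric compatibility of $\nabla^h$ with $h=g(b\cdot,b\cdot)$ is immediate from compatibility of $\nabla$ with $g$; torsion-freeness is exactly where $d^\nabla b=0$ enters, via $(d^\nabla b)(u,v)=\nabla_u(bv)-\nabla_v(bu)-b[u,v]=0$; and the curvature identity $R^h(u,v)w=b^{-1}R(u,v)(bw)$ followed by the two-dimensional sectional-curvature computation (numerator contributing $K_g\det(G)\det(b)$, denominator $\det(b)^2\det(G)$) gives $K_h=K_g/\det(b)$. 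One small clarification that would tighten your write-up: the "main obstacle" you anticipate in the curvature step is not actually there. Since $\nabla^h$ is by definition the conjugate connection $b^{-1}\circ\nabla\circ b$, the nested derivatives telescope exactly — $\nabla^h_u\nabla^h_v w=b^{-1}\nabla_u\bigl(b\,b^{-1}\nabla_v(bw)\bigr)=b^{-1}\nabla_u\nabla_v(bw)$ — with no leftover terms and no further use of the Codazzi condition; that hypothesis is consumed entirely in establishing torsion-freeness, i.e., in identifying $\nabla^h$ as the Levi-Civita connection of $h$ in the first place.
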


Given $g_1,g_2 \in \T$ and $\Psi: (\Sigma_\fkp,g_1)\longrightarrow (\Sigma_\fkp,g_2)$ a diffeomorphism isotopic to the identity, there exists a unique bundle morphism $b: T\Sigma_\fkp \longrightarrow T\Sigma_\fkp$ so that $g_2=g_1(b.,b.)$. We have the following characterization:

\begin{prop}\label{minlag}
The diffeomorphism $\Psi$ is minimal Lagrangian if and only if
\begin{itemize}
\item[i.] $b$ is Codazzi with respect to $g_1$,
\item[ii.] $b$ is self-adjoint for $g_1$ with positive eigenvalues.
\item[iii.] $\det(b)=1$.
\end{itemize}
\end{prop}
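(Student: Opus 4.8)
The plan is to characterize the minimal Lagrangian condition by unwinding the definition of a minimal Lagrangian diffeomorphism into conditions on the bundle morphism $b$, treating the three properties (i), (ii), (iii) in turn and showing their conjunction is equivalent to the graph of $\Psi$ being a minimal Lagrangian surface in $(\Sigma_\fkp \times \Sigma_\fkp, g_1 \oplus g_2)$. The key idea is that each analytic condition on $\Psi$ (area-preserving, Lagrangian, minimal) translates into an algebraic or differential condition on $b$, and the result of Labourie (Theorem \ref{labourie}) is precisely the tool that converts the geometric information about $b$ into curvature information.

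First I would address the self-adjointness. Since $b$ is defined by $g_2 = g_1(b\cdot, b\cdot)$, the morphism $b^* b$ (adjoint with respect to $g_1$) is the symmetric positive-definite endomorphism representing $g_2$ relative to $g_1$; writing $b = s \cdot o$ in its polar decomposition with $s$ self-adjoint positive and $o$ orthogonal, one sees that replacing $b$ by its positive symmetric part $s$ does not change $g_2$. So condition (ii) amounts to the canonical choice of $b$ as the unique positive self-adjoint square root of the endomorphism relating the two metrics, and I would verify that self-adjointness of $b$ is exactly the statement that $\Psi$ pulls back the Kähler form correctly, i.e.\ the Lagrangian condition on the graph. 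Next, the condition $\det(b) = 1$ should correspond to $\Psi$ being area-preserving: since the area form of $g_2$ is $\det(b)$ times the area form of $g_1$, area preservation is equivalent to $\det(b) \equiv 1$, giving (iii).

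The heart of the argument is condition (i), the Codazzi equation, which I expect to encode minimality of the graph. The graph of $\Psi$ is minimal precisely when its mean curvature vector in the product vanishes, and I would compute this mean curvature in terms of the second fundamental form of the graph, expressing everything through $b$ and the Levi-Civita connection $\nabla$ of $g_1$. The natural route is to show that, under (ii) and (iii), the minimality of the graph is equivalent to the vanishing of $d^\nabla b$: the off-diagonal block of the second fundamental form of the graph is governed by $\nabla_u(bv) - b(\nabla_u v)$ symmetrized, and its trace-free, appropriately symmetrized part vanishing is exactly the Codazzi condition. Here Labourie's formula $\nabla^h_u v = b^{-1}\nabla_u(b v)$ shows that when $b$ is Codazzi, self-adjoint, and unimodular, $h = g_2$ is genuinely the pullback metric with a connection compatible in the right way, closing the equivalence.

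The main obstacle will be the minimality step: carefully computing the second fundamental form of $\mathrm{graph}(\Psi) \subset (\Sigma_\fkp \times \Sigma_\fkp, g_1\oplus g_2)$ and disentangling which part of its vanishing corresponds to the Codazzi identity versus which is automatic from self-adjointness and area preservation. One must be attentive to the conical singularities: the computation is local and valid away from the marked points, and since $b$ is smooth outside $\fkp$ and $\Psi$ is assumed isotopic to the identity fixing each $p_i$, the equivalence of conditions is established on $\Sigma_\fkp \setminus \fkp$ and extends by the boundedness of the relevant quantities near the cone points (the cone angles being equal on both sides forces $b$ to tend to the identity there). I would also take care that ``minimal Lagrangian'' in the definition bundles together area preservation with minimality of the graph, so that the forward direction uses both hypotheses simultaneously, while the reverse direction reconstructs $\Psi$ from a $b$ satisfying (i)--(iii) and checks each geometric property in turn.
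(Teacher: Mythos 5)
The paper does not actually prove Proposition \ref{minlag}: it is stated as the classical characterization of minimal Lagrangian maps going back to \cite{MR1201611} and \cite{labourie1992surfaces}, so there is no in-paper argument to compare against and your attempt has to stand on its own as a complete proof. As an outline it identifies the right division of labour --- (iii) should encode area preservation, (ii) is essentially a normalization of $b$, and (i) must encode minimality of the graph --- but it contains two genuine problems.

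First, a conceptual misattribution: you assert that self-adjointness of $b$ ``is exactly\dots the Lagrangian condition on the graph''. It is not. The graph of $\Psi$ is Lagrangian for $\pi_1^*\omega_{g_1}-\pi_2^*\omega_{g_2}$ exactly when $\Psi^*\omega_{g_2}=\omega_{g_1}$, i.e.\ when $\Psi$ is area preserving, i.e.\ $\det b=1$ --- the condition you also (correctly) assign to (iii). Self-adjointness with positive eigenvalues carries no information about $\Psi$: for any diffeomorphism there is a unique such $b$ with $\Psi^*g_2=g_1(b\cdot,b\cdot)$. (Also, the polar decomposition you need for that normalization is $b=o\,s$ with $s=(b^*b)^{1/2}$, not $b=s\,o$, if replacing $b$ by $s$ is to leave $g_1(b\cdot,b\cdot)$ unchanged.) Second, and more seriously, the heart of the statement --- that, given (ii) and (iii), minimality of the graph is equivalent to $d^{\nabla}b=0$ --- is left as a sketch whose mechanism is misdescribed: the Codazzi condition is the vanishing of the \emph{antisymmetric} part of $\nabla b$, whereas the mean curvature of the graph is a \emph{trace} of its (symmetric) second fundamental form, so ``its trace-free, appropriately symmetrized part vanishing is exactly the Codazzi condition'' cannot be how the equivalence arises. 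The actual bridge uses (ii) in an essential way, e.g.\ via the surface identity that for a $g_1$-self-adjoint $b$ one has $d^{\nabla}b=0$ if and only if $\mathrm{div}_{g_1}(b)=d(\mathrm{tr}\,b)$, combined with the harmonicity of the two projections of the conformally parametrized graph (or, as the paper effectively does in proving Theorem \ref{min}, by passing through the ``center'' metric $\tfrac14 g_1\big((E+b)\cdot,(E+b)\cdot\big)$ and its shape operator). Until that computation is carried out, the equivalence with (i) --- which is the entire content of the proposition --- remains unproved. The closing discussion of the cone points is beside the point for this local, pointwise statement, and the claim that equal cone angles force $b\to E$ at the marked points is itself unjustified as written.
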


We now prove Theorem \ref{min}:

\textbf{Existence:}
Let $g_1,g_2\in \T$. It follows from Theorem \ref{surf} and the extension of Mess' parametrization that we can uniquely realize $g_1$ and $g_2$ as
 $$\left\{
 \begin{array}{l}
 g_1=  \I((E+JB).,(E+JB).) \\
 g_2= \I((E-JB).,(E-JB).),
 \end{array}\right.$$
 where $\I,~B,~J,~E$ are respectively the first fundamental form, the shape operator, the complex structure and the identity morphism of the unique maximal surface $S\hookrightarrow (M,g)$ and $(M,g)$ is an AdS convex GHM space-time with particles parametrized by $(g_1,g_2)$.
 
Define the bundle morphism $b:T\Sigma_\fkp\longrightarrow T\Sigma_\fkp$:
$$b=(E+JB)^{-1}(E-JB).$$
Note that, as the eigenvalues of $B$ are in $(-1,1)$, (from \cite[Lemma 5.15]{krasnov2007minimal}) the morphism $b$ is well defined. Moreover, we have $g_2=g_1(b.,b.)$. We are going to prove that $b$ satisfies the conditions of Proposition \ref{minlag}:
\begin{enumerate}
\item[-]\textit{Codazzi:} Denote by $D$ the Levi-Civita connection associated to $\I$, and consider the bundle morphism $A=(E+JB)$. From Codazzi's equation for surfaces, $d^DA=0$. From Proposition \ref{labourie}, the Levi-Civita connection $\nabla_1$ of $\I(A.,A.)$ satisfies:
$$\nabla_{1u}v=A^{-1}D_u(Av).$$
We get that $d^{\nabla_1} b=A^{-1}d^D(E-JB)=0$.
\item[-]\textit{Self-adjoint:}
\begin{eqnarray*}
g_1(bx,y) & =  & \I\big((E-JB)x,(E+JB)y\big) \\
& = & \I\big((E+JB)(E-JB)x,y\big) \\
& =  & \I\big((E-JB)(E+JB)x,y\big) \\
& = & \I\big((E+JB)x,(E-JB)y\big) \\
& = & g_1(x,by).
\end{eqnarray*}
\item[-]\textit{Positive eigenvalues:}
From \cite[Lemma 5.15]{krasnov2007minimal}, the eigenvalues of $B$ are in $(-1,1)$. So $(E\pm JB)$ has strictly positives eigenvalues and the same hold for $b$.
\item[-]\textit{Determinant 1:} $\displaystyle{\det(b)=\frac{\det(E-JB)}{\det(E+JB)}=\frac{1+\det(JB)}{1+\det(JB)}=1}$, (because $\text{tr}(JB)=0$).
\end{enumerate}

\textbf{Uniqueness:}
Suppose that there exist $\Psi_1,\Psi_2: (\Sigma_\fkp,g_1)\longrightarrow (\Sigma_\fkp,g_2)$ two minimal Lagrangian diffeomorphisms. It follows from Proposition \ref{minlag} that there exists $b_1,b_2: T\Sigma_\fkp \longrightarrow T\Sigma_\fkp$ Codazzi self-adjoint with respect to $g_1$ with positive eigenvalues and determinant 1 so that $g_1(b_1.,b_1.)$ and $g_2(b_2.,b_2.)$ are in the same isotopy class.

For $i=1,2$, define
$$\left\{\begin{array}{ll}
\I_i(.,.)&=  \frac{1}{4} g_1\big((E+b_i).,(E+b_i).\big) \\
B_i&=  -J_i(E+b_i)^{-1}(E-b_i), \\
\end{array}\right.$$
where $J_i$ is the complex structure associated to $\I_i$.

One easily checks that $B_i$ is well defined and self-adjoint with respect to $\I_i$ with eigenvalues in $(-1,1)$. Moreover, we have
$$b_i  =  (E+J_iB_i)^{-1}(E-J_iB_i).$$

Writing the Levi-Civita connection of $g_1$ by $\nabla$ and the one of $\I_i$ by $D^i$, Proposition \ref{labourie} implies

$$D^i_xy=(E+b_i)^{-1}\nabla_x((E+b_i)y).$$
So we get:
\begin{align*}
D^iB_i(x,y)&=(E+b_i)^{-1}\nabla_y\big((E+b_i)By\big)-(E+b_i)^{-1}\nabla_y\big((E+b_i)x\big)-B_i[x,y] \\
& =  (E+b_i)^{-1}(\nabla(E+b_i))(x,y) \\
& =  0.
\end{align*}
And the curvature of $\I_i$ satisfies
$$K_{\I_i}=-\det(E+JB_i)=-1-\det(B_i).$$
It follows that $B_i$ is traceless, self-adjoint and satisfies the Codazzi and Gauss equation. Setting $\II_i:=\I_i(B_i.,.)$, we get that $\I_i$ and $\II_i$ are respectively the first and second fundamental form of a maximal surface in an AdS convex GHM manifold with particles (that is, $(\I_i,\II_i)\in \HH$ where $\HH$ is defined in Section \ref{adsparticles}). Moreover, one easily checks that, for $i=1,2$
$$\left\{\begin{array}{l}
g_1  =  \I_i\left( (E+J_iB_i).,(E+J_iB_i).\right) \\
g_2  =  \I_i \left( (E-J_iB_i).,(E-J_iB_i).\right) \\
\end{array}\right.$$

It means that  $(\I_i,\II_i)$ is the first and second fundamental form of a maximal surface in $(M,g)$ (for $i=1,2$) and so, by uniqueness, $(\I_1,\II_1)=(\I_2,\II_2)$. In particular, $b_1=b_2$ and $\Psi_1=\Psi_2$.

\subsection{Middle point in $\T$}\label{interpretation}

Theorem \ref{min} provides a canonical identification between the moduli space $\M$ of singular AdS convex GHM structure on $\Sigma_\fkp\times \mathbb{R}$ with the space $\HH$ of maximal surfaces in germ of singular AdS convex GHM structure (as defined in Section \ref{adsparticles}). By the extension of Mess' parametrization, the moduli space $\M$ is parametrized by $\T\times\T$ and by \cite[Theorem 5.11]{krasnov2007minimal}, the space $\HH$ is parametrized by $T^*\T$.

It follows that we get a map
$$\varphi: \T\times \T \longrightarrow T^*\T.$$
We show that this map gives a ``middle point'' in $\T$:

\begin{theo}\label{interpretation}
Let $g_1,g_2\in \T$ be two hyperbolic metrics with cone singularities. There exists a unique conformal structure $\frak{c}$ on $\Sigma_\fkp$ so that
$$\Phi(u_1)=-\Phi(u_2)$$
and $u_2\circ u_1^{-1}$ is minimal Lagrangian. Here $u_i: (\Sigma_\fkp,\frak{c})\longrightarrow (\Sigma_\fkp,g_i)$ is the unique harmonic map isotopic to the identity and $\Phi(u_i)$ is its Hopf differential. Moreover, 
$$(g_1,g_2)=\varphi(\frak{c},i\Phi(u_1)).$$
\end{theo}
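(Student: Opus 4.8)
The plan is to reduce the statement to the geometry of the unique maximal surface supplied by Theorem~\ref{surf}, and to recognize the harmonic maps $u_1,u_2$ as \emph{identity} maps once the correct conformal representative is chosen. Concretely, given $g_1,g_2\in\T$, let $(M,g)$ be the AdS convex GHM manifold with particles parametrized by $(g_1,g_2)$ under $\Mess_\theta$, and let $S\hookrightarrow(M,g)$ be its unique maximal surface, with first fundamental form $\I$, shape operator $B$ and associated complex structure $J$. By the Krasnov--Schlenker formula (valid in the singular case), $g_1=\I((E+JB)\cdot,(E+JB)\cdot)$ and $g_2=\I((E-JB)\cdot,(E-JB)\cdot)$. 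I would set $\frak c:=[\I]$ and let $q$ be the holomorphic quadratic differential on $(\Sigma_\fkp,\frak c)$ whose real part is the second fundamental form $\II=\I(B\cdot,\cdot)$; by Proposition~\ref{simplepoles} this $q$ has at most simple poles at the marked points, so $(\frak c,q)\in T^*\T$, and by the very definition of $\varphi\colon T^*\T\to\T\times\T$ one has $\varphi(\frak c,q)=(g_1,g_2)$.

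The heart of the argument is the computation of the Hopf differentials of the identity maps $u_i=\mathrm{id}\colon(\Sigma_\fkp,\frak c)\to(\Sigma_\fkp,g_i)$. Working in a local conformal coordinate $z$ for $\frak c$ and using that $\partial_z$ is a $(+i)$-eigenvector of $J$, that $B$ is trace-free and self-adjoint (so $B\partial_z\in\mathbb{C}\,\partial_{\bar z}$ and $B^2=-\det(B)E$), and that $JB$ is trace-free and self-adjoint, one finds
$$g_1=(1-\det B)\,\I+2\,\I(JB\cdot,\cdot),\qquad g_2=(1-\det B)\,\I-2\,\I(JB\cdot,\cdot),$$
whose $(2,0)$-parts with respect to $\frak c$ are $\mp i\,q$. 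Hence $\Phi(u_1)=(u_1^{*}g_1)^{2,0}=-i\,q$ and $\Phi(u_2)=+i\,q$, so $\Phi(u_1)=-\Phi(u_2)$, both are holomorphic, and $i\,\Phi(u_1)=q$. Since a diffeomorphism with holomorphic Hopf differential is harmonic (the identity $\partial_{\bar z}\Phi(u)=\langle\tau(u),\partial_z u\rangle$ forces the tension field to be orthogonal to the full tangent plane, hence to vanish when $du$ is invertible), the identity maps $u_1,u_2$ are exactly the harmonic maps isotopic to the identity, which are unique by \cite{gell2010harmonic}. Finally $u_2\circ u_1^{-1}=\mathrm{id}$ is precisely the minimal Lagrangian diffeomorphism produced in the existence part of Theorem~\ref{min}, its associated Codazzi morphism being $(E+JB)^{-1}(E-JB)$. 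This yields the existence of $\frak c$ together with $\varphi(\frak c,i\Phi(u_1))=(g_1,g_2)$.

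For uniqueness I would exploit that $\varphi\colon T^*\T\to\T\times\T$ is a bijection, which is the content of Theorem~\ref{surf} combined with the parametrizations $\Mess_\theta\colon\M\to\T\times\T$ and $\HH\cong T^*\T$. Given any conformal structure $\frak c'$ satisfying the two conditions, set $q':=i\Phi(u_1')$; since $u_1'$ is harmonic, $q'$ is a holomorphic quadratic differential with at most simple poles, so $(\frak c',q')\in T^*\T$. Reconstructing a maximal surface from $(\frak c',q')$—taking $\II':=\mathrm{Re}(q')$, which is trace-free and Codazzi, together with the unique metric in $\frak c'$ solving the modified Gauss equation—and running the computation above in reverse shows that its two metrics $\I'((E\pm JB')\cdot,(E\pm JB')\cdot)$ share the traceless parts $2\,\mathrm{Re}(\Phi(u_i'))$ with $u_i'^{*}g_i$; the condition $\Phi(u_1')=-\Phi(u_2')$ makes the two reconstructions compatible, and matching the conformal factors gives $\varphi(\frak c',q')=(g_1,g_2)$, so injectivity of $\varphi$ forces $\frak c'=\frak c$. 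I expect the main obstacle to lie precisely in this last matching step: showing that the first fundamental form reconstructed from the Schoen data coincides with the one carried by the maximal surface. This requires the uniqueness of the solution of the modified Gauss equation in a fixed conformal class, extended to metrics with conical singularities (the singular analogue of \cite[Lemma~3.6]{krasnov2007minimal}), together with the precise asymptotics of harmonic maps near the cone points of angle less than $\pi$ from \cite{gell2010harmonic}, so that all the quadratic differentials involved have at most simple poles and the relevant curvature integrals converge.
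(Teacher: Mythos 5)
Your existence argument and the derivation of $(g_1,g_2)=\varphi(\frak{c},i\Phi(u_1))$ are correct but organized differently from the paper. The paper obtains the harmonicity of the $u_i$ and the relation $\Phi(u_1)+\Phi(u_2)=0$ softly: the graph of the minimal Lagrangian map $\Psi$ of Theorem \ref{min} is a minimal surface in $(\Sigma_\fkp\times\Sigma_\fkp,g_1\oplus g_2)$, its inclusion $\iota$ is therefore a conformal harmonic map by Eells--Sampson, the $u_i$ are the compositions of $\iota$ with the projections, and conformality of $\iota$ is exactly $\Phi(u_1)+\Phi(u_2)=0$. The identification $\frak{c}=[g_1+g_2]=[\I]$ (using $g_1+g_2=2(\I+\III)$ and conformality of $\III$ to $\I$ on a maximal surface) and the principal-frame computation giving $\II=\Re(i\Phi(u_1))$ are then carried out as a separate step; they agree with your formulas $g_{1,2}=(1-\det B)\,\I\pm 2\,\I(JB\cdot,\cdot)$. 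Your variant --- computing the Hopf differential of the identity directly and deducing harmonicity from holomorphy of $\Phi$ via $\bar\partial\Phi(u)=\langle\tau(u),\partial u\rangle$ --- is legitimate for diffeomorphisms and avoids the Eells--Sampson input, though you should justify that the identity map lies in the class for which the uniqueness of \cite{gell2010harmonic} applies near the cone points.

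The genuine gap is in your uniqueness argument, exactly at the step you flag. Knowing that the reconstructed metric $\I'\big((E+J'B')\cdot,(E+J'B')\cdot\big)$ and $(u_1')^*g_1$ have the same trace-free part $2\Re\Phi(u_1')$ relative to $\frak{c}'$ does not identify them: to ``match the conformal factors'' you would need that a hyperbolic cone metric is determined by the Hopf differential of the harmonic diffeomorphism from a fixed conformal structure (the Sampson--Wolf parametrization of $\T$, in the conical setting), which is a substantial extra input not established in the paper. This detour is unnecessary. The paper's uniqueness is a one-line reduction to Theorem \ref{min}: if $\frak{c}'$ satisfies both conditions, then $u_2'\circ(u_1')^{-1}$ is a minimal Lagrangian diffeomorphism isotopic to the identity, hence equals $\Psi$ by the uniqueness part of Theorem \ref{min}; and $\Phi(u_1')+\Phi(u_2')=0$ says precisely that $x\mapsto(u_1'(x),u_2'(x))$ is a conformal parametrization of $\mathrm{graph}(\Psi)\subset(\Sigma_\fkp\times\Sigma_\fkp,g_1\oplus g_2)$, so $\frak{c}'$ is the conformal class induced on that graph, i.e.\ $\frak{c}'=\frak{c}$. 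I would replace your uniqueness argument by this reduction.
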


\begin{proof}
\noindent\textbf{Existence:}
For $g_1,g_2\in\T$, let $\Psi: (\Sigma_\fkp,g_1) \longrightarrow (\Sigma_\fkp,g_2)$ be the unique minimal Lagrangian diffeomorphism isotopic to the identity. By definition, the embedding $\iota:~\Gamma:=\text{graph}(\Psi) \hookrightarrow (\Sigma_\fkp\times \Sigma_\fkp, g_1\oplus g_2)$ is minimal so $\iota$ is a conformal harmonic map (see \cite[Proposition 4.B]{eells-sampson}).  

It follows that the $i$-th projection $\pi_i :(\Sigma_\fkp\times \Sigma_\fkp, g_1\oplus g_2) \longrightarrow (\Sigma_\fkp,g_i)$ restricts to a harmonic map $u_i$ on $\iota(\Gamma)$. Moreover, as $\iota$ is conformal, we get
$$\iota^*(g_1\oplus g_2)^{2,0}=0=u_1^*(g_1)^{2,0}+u_2^*(g_2)^{2,0}=\Phi(u_1)+\Phi(u_2).$$

\medskip
\noindent\textbf{Uniqueness:} It is a direct consequence of the uniqueness part of Theorem \ref{min}.

\medskip
\noindent\textbf{Expression of $\varphi$:}
It follows from the extension of Mess' parametrization (section \ref{extensionmessparam}) and Theorem \ref{surf} that the metrics $g_1$ and $g_2$ can be uniquely expressed as

$$\left \{
\begin{array}{l}
g_1=\I((E+JB).,(E+JB).) \\
g_2=\I((E-JB).,(E-JB).),   
\end{array}
\right .$$

\noindent where $\I,~B,~J,~E$ are respectively the first fundamental form, shape operator, complex structure and identity morphism of the unique maximal surface $S\hookrightarrow (M,g)$, and $(M,g)$ is an AdS convex GHM space-time with particles parametrized by $(g_1,g_2)$.

An easy computation shows that
$$g_1+g_2=2(\I+\III).$$

As $S$ is maximal, the third fundamental form $\III$ of $S$ is conformal to $\I$, so
$$\frak{c}:=[g_1+g_2]=[I],$$
where $[h]$ denotes the conformal class of a metric $h$.

On the other hand, the embedding $\iota: \Gamma=\text{graph}(\Psi) \hookrightarrow (\Sigma_\fkp\times \Sigma_\fkp, g_1\oplus g_2)$ is conformal, so
$$[\iota^*(g_1\oplus g_2)]=[g_1+g_2]=[I]=\frak{c}.$$

By definition of the Hopf differential, there exists some strictly positive function $\lambda$ so that with respect to the complex structure associated to $\frak{c}$, we have the following decomposition
$$u_1^*g_1= \Phi(u_1) + \lambda \iota^*(g_1\oplus g_2)+ \overline{\Phi(u_1)}.$$
Note that in our case, the metrics $g_1$ and $g_2$ are normalized so that $u_i=\text{id}$. Moreover, as $[\iota^*g_1\oplus g_2]=[\I]$, we have (for a different function $\lambda'$)
$$g_1= \Phi(u_1) + \lambda' \I+ \overline{\Phi(u_1)}.$$
Now we get
$$g_1=\I\big((E+JB).,(E+JB).\big)= 2\I(JB.,.)+ \I+\III,$$
in particular,
$$\I(JB.,.)=\Re\big(\Phi(u_1)\big).$$

Let $\partial_x,\partial_y\in \Gamma(TS)$ be an orthonormal framing of principal directions of $S$. We have the following expressions in this framing:
$$B=
\left(\begin{array}{ll}
k & 0 \\
0 & -k
\end{array}\right),~~
J=
\left(\begin{array}{ll}
0 & -1 \\
1 & 0
\end{array}\right).$$
Setting $dz\in \Gamma(T^*S\otimes\mathbb{C}),~dz:=dx+idy$ where $(dx,dy)$ is the framing dual to $(\partial_x,\partial_y)$, we obtain
$$\Phi(u_1)= -ik dz^2=k(dxdy+dydx)-ik(dx^2-dy^2).$$
So
$$\II=\I(B.,.)=\Re(i\Phi(u_1)),$$
and $(\frak{c},i\Phi(u_1))$ is the maximal AdS germ associated to $(M,g)$.
\end{proof}

\bibliographystyle{alpha}
\bibliography{Maximal Surface in AdS with particles.bbl}

\end{document}